\documentclass[12pt,reqno]{amsart}
\usepackage{xcolor}
\usepackage{eucal}
\usepackage{amsthm}
\usepackage{amsfonts}
\usepackage{amsmath}
\usepackage{amssymb}
\usepackage{mathrsfs}
\usepackage{epsfig}
\usepackage{leftindex}
\usepackage{hyperref}
\usepackage{float}
\usepackage{MnSymbol} 
\newcommand{\ncom}{\newcommand}

\ncom{\ul}{\underline}
\ncom{\ol}{\overline}
\ncom{\bq}{\begin{equation}}
\ncom{\eq}{\end{equation}}
\ncom{\beqn}{\begin{eqnarray*}}
\ncom{\eeqn}{\end{eqnarray*}}
\ncom{\beq}{\begin{eqnarray}}
\ncom{\eeq}{\end{eqnarray}}
\ncom{\nno}{\nonumber}
\ncom{\rar}{\rightarrow}
\ncom{\Rar}{\Rightarrow}
\ncom{\noin}{\noindent}
\ncom{\bc}{\begin{centre}}
\ncom{\ec}{\end{centre}}
\ncom{\sz}{\scriptsize}
\ncom{\rf}{\ref}
\ncom{\sgm}{\sigma}
\ncom{\Sgm}{\Sigma}
\ncom{\dt}{\delta}
\ncom{\Dt}{Delta}
\ncom{\s}{\underline{s}}
\ncom{\lmd}{\lambda}
\ncom{\Lmd}{\Lambda}
\ncom{\eps}{\epsilon}
\ncom{\pcc}{\stackrel{P}{>}}
\ncom{\dist}{{\rm\,dist}}
\ncom{\sspan}{{\rm\,span}}
\ncom{\re}{{\rm Re\,}}
\ncom{\im}{{\rm Im\,}}
\ncom{\sgn}{{\rm sgn\,}}
\ncom{\ba}{\begin{array}}
\ncom{\ea}{\end{array}}
\ncom{\eop}{\hfill{{\rule{2.5mm}{2.5mm}}}}
\ncom{\eoe}{\hfill{{\rule{1.5mm}{1.5mm}}}}
\ncom{\eof}{\hfill{{\rule{1.5mm}{1.5mm}}}}
\ncom{\hone}{\mbox{\hspace{1em}}}
\ncom{\htwo}{\mbox{\hspace{2em}}}
\ncom{\hthree}{\mbox{\hspace{3em}}}
\ncom{\hfour}{\mbox{\hspace{4em}}}
\ncom{\hsev}{\mbox{\hspace{7em}}}
\ncom{\vone}{\vskip 2ex}
\ncom{\cH}{{\mathcal H}}
\ncom{\vtwo}{\vskip 4ex}
\ncom{\vonee}{\vskip 1.5ex}
\ncom{\vthree}{\vskip 6ex}
\ncom{\vfour}{\vspace*{8ex}}
\ncom{\norm}{\|\;\;\|}
\ncom{\integ}[4]{\int_{#1}^{#2}\,{#3}\,d{#4}}
\ncom{\inp}[2]{\langle{#1},\,{#2} \rangle}
\ncom{\Inp}[2]{\left\langle{#1},\,{#2} \right\rangle}
\ncom{\vspan}[1]{{{\rm\,span}\#1 \}}}
\ncom{\dm}[1]{\displaystyle {#1}}
\ncom{\Hom}{\operatorname{Hom}}
\ncom{\Hol}{\operatorname{Hol}}
\ncom{\Ps}{\mathcal P_{\underline{s}}}
\ncom{\hl}{\mathcal H}

\ncom{\defin} {\overset {\text {\rm def} }{=}}

\newtheorem{theorem}{\bf Theorem}[section]
\newtheorem{defn}{\bf Definition}%[]
\newtheorem{proposition}[theorem]{\bf Proposition}%[section]
\newtheorem{corollary}[theorem]{\bf Corollary}%[section]
\newtheorem{lemma}[theorem]{\bf Lemma}%[section]
\newtheorem{question}[theorem]{\bf Question}

\newtheorem{remark}[theorem]{\bf Remark}%[section]
\def \N{\mathbb{N}}

\def \s{\underline{s}}
\def \m{\underline{m}}
\def \\lambda{\underline{\lambda}}

\renewcommand{\epsilon}{\varepsilon}
\renewcommand{\kappa}{\varkappa}

\begin{document}
\title[Subnormality of $\mathbb K$-Homogeneous operators]{The Subnormality of $\mathbb K$-Homogeneous Multiplication Operators on Bounded Symmetric Domains}
 \author[S. Kumar ]{Surjit Kumar}
 \address[S. Kumar]{Department of Mathematics, Indian Institute of Technology Madras, Chennai 600036, India} 
 \email{surjit@iitm.ac.in}
 \author[M. K. Mal]{Milan Kumar Mal}
 \address[M. K. Mal]{Department of Mathematics, Indian Institute of Technology Madras, Chennai 600036, India}
  \email{ma21d018@smail.iitm.ac.in; milanmal1702@gmail.com }

\thanks{The work of S. Kumar was partially supported by MATRICS grant (Ref No. MTR/2022/000457) and IRG grant (Ref No. ANRF/IRG/2024/000432/MS) of Anusandhan National Research Foundation (ANRF). 
 Support for the work of M. K. Mal was provided in the form of a Prime Minister's Research Fellowship (PMRF / 2502827).}

 \subjclass[2020]{Primary 47A13, 47B20, 47B32, 46E20, Secondary 32M15}
 \keywords{Cartan domain, subnormal tuple, weighted Bergman space, homogeneous operator, Hausdorff moment sequence, the Selberg-Jack symmetric function}

\date{}

\begin{abstract}
   Let $\Omega=G/\mathbb K$ be an irreducible bounded symmetric domain of rank $r$ and dimension $d.$ In this paper, we study the joint subnormality of $d$-tuple of multiplication operators induced by the coordinate functions on reproducing kernel Hilbert spaces of holomorphic functions on $\Omega$ determined by $\mathbb K$-invariant kernels. We introduce the notion of contractive $d$-tuple associated with $\Omega$ and prove that the weighted Bergman shifts on $\Omega$ are contractive $d$-tuple precisely when they are jointly subnormal.  
A characterization of joint subnormality further leads to the study of a class of moment problems, which we refer to as twisted moment problems. We establish that the twisted moment problem is equivalent to an appropriate Hausdorff moment problem. This equivalence provides a moment-theoretic characterization of joint subnormality for the multiplication operators under consideration.
\end{abstract}

\maketitle

\section{Introduction}\label{S1}
The study of subnormal operators reveals a deep and fruitful connection between abstract operator theory and classical function theory, as seen in fundamental examples such as the unilateral shift and the Bergman shift. This interplay extends to the multivariable setting, where the notion of joint subnormality plays a similarly pivotal role. Indeed, the existence of a normal extension endows a commuting operator tuple with a rich functional calculus, thereby enabling function theoretic and spectral properties, for instance, see   \cite{RC1985}, \cite{At1990}, \cite{Cn19912}, \cite{DE2005}. A masterful and detailed exposition of one variable subnormal operators can be found in Conway \cite{Cn1991}.
 
The subnormality of tuples of multiplication operators by the coordinate functions on the weighted Bergman spaces was investigated by Bagchi and Misra \cite{BM1996} for the matrix unit ball, and subsequently extended to irreducible bounded symmetric domains by Arazy and Zhang in \cite{AZ2003}. 
  These operator tuples are homogeneous and have been studied extensively in the past few years (see \cite{MS1990}, \cite{MU2016}, and \cite{KM2019}).
  This motivates us to study the problem of characterizing the subnormality for a broader class of operator tuples that are homogeneous under a maximal compact subgroup $\mathbb K$ of the identity component of the biholomorphic automorphism group of an irreducible bounded symmetric domain. 
A class of $\mathbb{K}$-homogeneous $d$-tuples of operators $\boldsymbol{T}$, for which the joint kernel of $\boldsymbol{T}^*$ is a one-dimensional cyclic subspace for $\boldsymbol{T}$ and the joint point spectrum of $\boldsymbol{T}^*$ contains an irreducible bounded symmetric domain $\Omega \subseteq \mathbb{C}^d$, was studied in \cite{GKP2022} (see also \cite{Up2021}, \cite{HU2023}).
In fact, such operator tuples is unitarily equivalent to the $d$-tuple $\boldsymbol M$ of multiplication operators by the coordinate functions on a reproducing kernel Hilbert space of holomorphic functions on $\Omega$, determined by a $\mathbb K$-invariant reproducing kernel \cite[Theorem 2.3]{GKP2022}.
While several properties of $\boldsymbol{M}$, including boundedness, membership in the Cowen–Douglas class, and its unitary and similarity orbits, were studied in \cite{GKP2022}, the question of its subnormality remains open. In this paper, one of our main results is to establish a criterion for the subnormality of $\boldsymbol M$. In the case of the Euclidean unit ball, a necessary and sufficient condition for subnormality of a certain class of $\mathcal{U}(d)$-homogeneous operator tuples was obtained in \cite[Theorem 5.3]{CY2015}.

For an irreducible bounded symmetric domain $\Omega$ of rank $r$ in $\mathbb C^d$, let $\mathrm{Aut}(\Omega)$ be the group of biholomorphic automorphisms of $\Omega$, equipped with the compact-open topology, that is, the topology of uniform convergence on compact subsets of $\Omega.$
Let $G$ denote the connected component of identity in $\mathrm{Aut}(\Omega).$ It is known that the group $G$ acts transitively on $\Omega.$ Let $\mathbb K$ be the group of linear automorphisms in $G$, then by Cartan's theorem $\mathbb K=\{\phi \in G: \phi(0)=0\}$ is a maximal compact subgroup of $G$ and $\Omega$ is isomorphic to $G/\mathbb K.$
An irreducible bounded symmetric domain can also be realized as an open unit ball of a {\it Cartan factor} $Z \approx \mathbb C^d$. By the classification result of \'E. Cartan \cite{Ca1935}, there are six types of irreducible bounded symmetric domains up to biholomorphic equivalence, of which the first four are called the classical Cartan domains. The other two types of domains are known as {\it exceptional domains}. 
The type of the irreducible bounded symmetric domain of rank $r$ classified by two characteristic multiplicities $a$ and $b.$ Moreover, $d=r+\frac{a}{2}r(r-1)+rb.$ In this paper, $\Omega$ will always denote a classical Cartan domain.

There is a family $\{e_1, \ldots, e_r\}$ of pairwise orthogonal minimal tripotents known as {\it Jordan frame} of $Z$ such that every $z \in Z $ admits a {\it polar decomposition} \beq \label{polardecom} z= k \cdot \sum_{j=1}^r t_j e_j,\qquad k \in \mathbb{K}, \; t_1 \geq \ldots \geq t_r \geq 0.\eeq While $k\in \mathbb K$ in the polar decomposition of $z$ need not be uniquely determined, the ordered singular values $t_1,\ldots,t_r$ are uniquely determined. 
 Furthermore, $t_1 < 1, t_1 =1,$ or $t_1\geq 1$ determines whether $z$ belongs to $\Omega$, $\partial\Omega$ or $\Omega^c$, respectively. Here, $\partial \Omega$ is the topological boundary of $\Omega.$
The domain $\Omega$ is the open unit ball $\{ z \in Z: \|z\| <1\},$ where $\|z\|$ denotes the largest singular value of $z$. Let $e=e_1+\cdots+e_r$ be a maximal tripotent. The set $\{ z \in  Z: z= k \cdot e, k \in \mathbb K\}$ of maximal tripotents forms the {\it Shilov boundary} $S_\Omega$ of $\Omega.$ 
Let $\Delta_r=\{(t_1,\ldots,t_r): 0 \leq t_r \leq \cdots \leq t_1 \leq 1\}$. Note that $\Delta_r$ can be identified as a subset of $\overline{\Omega}$ via the map $\Delta_r \ni (t_1,\ldots, t_r)\mapsto \sum_{i=1}^r t_i e_i\in \overline{\Omega}.$ In view of the polar decomposition \eqref{polardecom}, each $\mathbb K$-orbit in $\overline{\Omega}$ intersects $\Delta_r$ in exactly one point. This yields a canonical identification of the orbit space $\overline{\Omega}/\mathbb K$ with $\Delta_r$ via the quotient map $\pi: \overline{\Omega} \to \overline{\Omega}/\mathbb K\approx \Delta_r$, given by $\pi(z)=$ the $r$-tuple of singular values of $z$ arranged in decreasing order. With this identification, every element in $S_\Omega$ corresponds to the distinguished element $(1,1,\ldots,1)$ in $\Delta_r.$
We refer to \cite{Ls1977}, \cite{AJ1995}, and \cite{UP1996} for more details on bounded symmetric domains. 

Let $\mathcal{B}(\mathcal{H})$ denote the algebra of bounded linear operators on a complex separable Hilbert space $\mathcal{H}.$ By a commuting $d$-tuple $\boldsymbol{T}=(T_1,\ldots, T_d)$, we mean a $d$-tuple of bounded linear operators $T_1,\ldots, T_d\in \mathcal{B}(\mathcal{H})$ such that $T_i T_j=T_jT_i$ for all $1 \leq i,j \leq d.$ For a commuting $d$-tuple $\boldsymbol{T}$, the notations $\sigma(\boldsymbol T)$, $\sigma_p(\boldsymbol T)$ and $\sigma_{\pi}(\boldsymbol T)$ are reserved for the Taylor joint spectrum, the joint point spectrum, and the joint approximate point spectrum of $\boldsymbol T$, respectively. 
%We fix notations $\m,\; \s,\;\underline{\lambda}$ for signatures.

Let $\mathcal{P}(Z)$ denote the space of (analytic) polynomials on the Cartan factor $Z$, equipped with the Fischer-Fock inner product \[\langle p,q\rangle_{\mathcal{F}}:=\frac{1}{\pi^d}\int_{\mathbb{C}^d}p(z)\overline{q(z)}e^{-| z|^2}dm(z),\]
where $dm(z)$ is the Lebesgue measure. The group $\mathbb K$ acts on $\mathcal P(Z)$ via composition $(k\cdot p)(z)=p(k^{-1} \cdot z)$, for all $k\in \mathbb K$ and $p\in \mathcal P(Z).$ 
This action induces a {\it Peter-Weyl decomposition} $\mathcal{P}(Z)=\bigoplus_{\m} \mathcal{P}_{\m }$ into irreducible, mutually $\mathbb{K}$-inequivalent subspaces $\mathcal P_{\m},$ where the indices $\m$ runs over all signatures, $r$-tuple $\m=(m_1,\ldots, m_r)$ of non-negative integers with $m_1\geq m_2\geq \cdots \geq m_r\geq 0$ (see \cite[page 21]{AJ1995}). We reserve the notations $\m$, $\s$, and $\underline{\lambda}$ for signatures, and denote the set of all signatures by $\Vec{\mathbb{N}}^r$. Note that the space $\mathcal{P}_{\m }$ is $\mathbb K$-invariant with respect to the Fischer-Fock inner product. 
Let $d_{\m}$ be the dimension of the space $\mathcal{P}_{\m}.$ Consider an orthonormal basis $\{\psi_{\beta}^{\m}\}_{\beta=1}^{d_{\m}}$ of $\mathcal{P}_{\m}$ with respect to the Fischer-Fock inner product. The Fischer-Fock reproducing kernel $K_{\m}$ of $\mathcal{P}_{\m}$ is given by $$K_{\m}(z,w)=\sum_{\beta} \psi_{\beta}^{\m}(z)\overline{\psi_{\beta}^{\m}(w)}, \qquad z,w\in Z.$$

Note that every element $k\in \mathbb K$ is a $d$-tuple $(k_1,\ldots, k_d)$ of linear polynomials in $d$-variables. Therefore, the group $\mathbb{K}$ acts on a commuting $d$-tuple $\boldsymbol{T}=(T_1, \ldots, T_d)$ of bounded linear operators defined on a complex separable Hilbert space $\mathcal{H}$ via the map $$k\cdot\boldsymbol{T}:=\big(k_1(T_1, \ldots, T_d), \ldots, k_d(T_1, \ldots, T_d)\big). $$
The tuple $\boldsymbol T$ is said to be $\mathbb K$-homogeneous if $\boldsymbol T$ is unitarily equivalent to $k\cdot \boldsymbol T,$ that is, there exists a unitary operator $\Gamma(k)$ on $\mathcal{H}$ such that $T_j \Gamma(k)=\Gamma(k)k_j(\boldsymbol{T}),\;\; j=1,\ldots,d.$ 
In \cite[Theorem 2.3]{GKP2022}, it is shown that a $\mathbb K$-homogeneous operator tuple $\boldsymbol{T}$ satisfying $(i)$ the joint kernel of $\boldsymbol T^*$ is a one dimensional cyclic subspace for $\boldsymbol{T}$, and $(ii)\; \Omega \subseteq \sigma_p(\boldsymbol{T}^*)$ is unitarily equivalent to the tuple $\boldsymbol{M}_z^{(\alpha)}$ of multiplication operators by the coordinate functions on the reproducing kernel Hilbert space $\mathcal{H}^\alpha(\Omega)$ determined by the $\mathbb K$-invariant kernel $$K^\alpha( z, w) = \sum_{\m\in \Vec{\mathbb N}^r} \alpha_{\m} K_{\m} (z, w),\qquad z, w \in \Omega,$$ where $\alpha_{\underline 0}=1, \; \alpha_{\m}>0$ for all $\m.$ The case when $\mathbb K = \mathcal{U}(d),$ this result was established in \cite[Theorem 2.5]{CY2015}. We stress that the notation $K^\alpha(z,w)$ is not intended to indicate a power of ``$K(z,w)$"; instead, it denotes the kernel determined by the sequence $\{\alpha_{\m}\}$.
Note that the inner product on $\mathcal{H}^\alpha(\Omega)$ is related with the Fischer-Fock inner product as follows \beq \label{inner-product relation}\inp{p}{q}_{\mathcal{F}}=\alpha_{\m} \inp{p}{q}_{\alpha}, \qquad p, q\in \mathcal{P}_{\m}. \eeq
 The multiplication operator tuple on the Bergman space $\mathbb A^2(\Omega)$ of $\Omega$ is a classical example of such $\mathbb K$-homogeneous operator tuples. It is known that the Bergman space is a reproducing kernel Hilbert space determined by the (Bergman) kernel $B(z,w)=\Delta (z,w)^{-p},$ where $p=2+a(r-1)+b,$ is the {\it genus} of the domain $\Omega$ (see \cite[Theorem 2.9.8 ]{UP1996}). 
Here, $\Delta(z,w)$ is a $\mathbb K$-invariant sesqui-analytic polynomial, called the {\it Jordan triple determinant}, uniquely determined by the property $\Delta(z,z)=\prod_{i=1}^r (1-t_i^2)$, where $z=k \cdot (\sum_{i=1}^r t_i e_i)$ is the polar decomposition (see \cite[p. 23]{AJ1995}). By Faraut-Kor\'anyi formula \cite[Theorem 3.8]{FK1990}, for any scalar $\nu,$ $$\Delta(z,w)^{-\nu}=\sum_{\m} (\nu)_{\m} K_{\m}(z,w),$$ where $(\nu)_{\m}$ denotes the {\it generalized Pochhammer} symbol $$(\nu)_{\m}=\prod_{j=1}^r \left(\nu-\frac{a}{2}(j-1)\right)_{m_j}=\prod_{j=1}^r \prod_{l=1}^{m_j}\left(\nu-\frac{a}{2}(j-1)+l-1\right).$$ For $\nu \in \{\frac{a}{2}(j-1): j=1,\ldots, r\}\bigcup(\frac{a}{2}(r-1), \infty)$ ({\it Wallach set}), $K^\nu(z,w):=\Delta(z,w)^{-\nu}$ is a positive semi-definite kernel on $\Omega$ (see \cite[Corollary 4.4]{AJ1995}).
Consequently, the kernel $K^{\nu}(z,w)$ induces the reproducing kernel Hilbert space $\mathcal{H}^\nu(\Omega)$, known as the {\it weighted Bergman space}. In particular, the choices $\nu=\frac{d}{r}$ and $\nu=\frac{a}{2}(r-1)+\frac{d}{r}+1$ correspond to the Hardy space on the Shilov boundary and the standard Bergman space, respectively. The $d$-tuple $\boldsymbol{M}_z^{(\nu)}$ of multiplication operators by the coordinate functions on $\mathcal{H}^\nu(\Omega)$ is bounded whenever $\nu > \frac{a}{2}(r-1)$. Moreover, these tuples are $\mathbb{K}$-homogeneous, and in fact, they are homogeneous (under $G$) $d$-tuple of operators (see \cite{BM1996}, \cite{AZ2003}).

We now briefly outline the organization of the paper.
In Section \ref{S2}, we introduce the notion of a contraction, referred to as {\it contractive $d$-tuple}, associated with $\Omega$ for a commuting $d$-tuple $\boldsymbol{T}$ which extends the notion of spherical (column) contraction. We prove that the Taylor joint spectrum of a contractive $d$-tuple is contained in $\overline{\Omega}.$ Furthermore, we identify the weighted Bergman shifts on $\Omega$ that are contractive $d$-tuples in Theorem \ref{contractionpoint}.

Recall that a commuting $d$-tuple $\boldsymbol S=(S_1, \cdots, S_d)$ defined on a Hilbert space $\mathcal{H}$ is said to be (jointly) {\it subnormal} if there exist a Hilbert space ${\mathcal K}$ containing ${\mathcal H}$ and a commuting $d$-tuple $\boldsymbol N=(N_1, \cdots, N_d)$ of normal operators on $\mathcal{K}$ such that $\mathcal{H}$ is invariant under each $N_i$ and $S_i=N_i|_{\mathcal{H}}$ for all $1\leq i \leq d.$ 

In Section \ref{S3}, we obtain a criterion for subnormality of a contractive $d$-tuples $\boldsymbol M_z^{(\alpha)}$ in Theorem \ref{Main theorem 1}. One of these characterizations, part $(iii)$ of Theorem \ref{Main theorem 1}, leads us to consider the following type of moment problem: for a given sequence $\{b_{\m}\}_{\m \in \Vec{\mathbb N}^r },$ when does there exists a measure $\nu$ supported on $\Delta_r$ such that $\int_{\Delta_r} S_{\m}^{a/2}(x) d\nu(x)=b_{\m}\; ?$ Here, $S_{\m}^{a/2}(x)$ denotes the Selberg-Jack symmetric function. The resolution of this problem is equivalent to solving a certain Hausdorff moment problem, as shown in Theorem \ref{moment problem condition}. Ultimately, this equivalence yields a complete characterization for the subnormality of contractive $d$-tuple $\boldsymbol{M}_z^{(\alpha)}$ in Corollary \ref{complete characterization}.

Section \ref{S4} is devoted to discussing a few applications of Theorem \ref{Main theorem 1}. Note that the tuple $\boldsymbol{M}_z^{(\nu)}$ is subnormal if and only if $\nu=\frac{d}{r}, \frac{d}{r}+\frac{a}{2},\ldots, \frac{d}{r}+\frac{a}{2}(r-1)$ or $\nu > \frac{d}{r}+\frac{a}{2}(r-1) $~\cite[Theorem 6.1]{AZ2003}. We provide an independent proof of this result (see Theorem \ref{subnormality alternative}) using the equivalence of parts $(i)$ and $(iii)$ of Theorem \ref{Main theorem 1}. More precisely, we construct the measure $\mu$ supported on $\overline{\Omega}$, as described in \eqref{measure for cont} and \eqref{measure for discrt}, corresponding to the continuous range and discrete values of $\nu$, respectively. For the continuous range of $\nu$, we use the generalized Selberg integral formula \cite[Theorem 1]{KD1997} to verify part $(iii)$ of Theorem \ref{Main theorem 1}.
However, the formula \cite[Theorem 1]{KD1997} is not applicable with the measures given by \eqref{measure for discrt} for the discrete values of $\nu$. To overcome this, we use a key identity obtained in Proposition \ref{important lemma} to prove Theorem \ref{subnormality alternative}. In an another application, we discuss the subnormality of a class of operator tuples $\boldsymbol{M}_z^{(\alpha)}$ defined on $\mathcal{H}^\alpha(\Omega)$ with $\alpha_{\m}=\frac{(\nu)_{\m}}{(\eta)_{\m}}$ for any real numbers $\nu, \eta >1$, where $\Omega$ is a rank $2$ type-I domain.

\section{Cartan contraction}\label{S2}
In this section, we introduce and study a notion of contraction associated with Cartan domains. This concept is motivated by, and developed in analogy with, the notion of spherical contraction that arises in the context of the Euclidean open unit ball.
%%We also investigate the corresponding row contractive analogue.

The Jordan triple determinant $\Delta(z,w)$ associated with the classical Cartan domain $\Omega$ is a $\mathbb K$-invariant sesqui-analytic polynomial uniquely determined by the property $\Delta(z,z)=\prod_{i=1}^r (1-t_i^2)$, where $z=k \cdot (\sum_{i=1}^r t_i e_i)$ is the polar decomposition (see \cite[p. 23]{AJ1995}).
By Faraut-Kor\'anyi formula \cite[Theorem 3.8]{FK1990}, $\Delta(z,w)$ admits the following decomposition
\[\Delta(z,w)=\sum_{\ell =0}^r (-1)^{\ell} \Delta^{(\ell)}(z,w),\qquad z,w \in \Omega,\] where $(\ell):= (1,\ldots,1,0,\ldots,0)$ is the signature with first $\ell$ many ones. The sesqui-analytic polynomials $\Delta^{(\ell)}(z,w)$ are homogeneous of bi-degree $(\ell,\ell)$ given by
\beq \label{delk} \Delta^{(\ell)} (z,w)= (-1)^{\ell} (-1)_{(\ell)} K_{(\ell)}(z,w)= \prod_{j=1}^{\ell} (1+\frac{a}{2}(j-1))\sum_\alpha \psi_\alpha^{(\ell)}(z) \overline{\psi_\alpha^{(\ell)}(w)}.\eeq
Note that $\Delta^{(1)}(z,w)=\sum_{i=1}^dz_i \overline{w}_i.$ The case when rank $r=1$, $\Delta(z,w)=1-\sum_{i=1}^dz_i \overline{w}_i.$

By combining \eqref{delk} with \cite[Lemma 3.2]{FK1990}, we get
\beqn \Delta(z,z)=\sum_{\ell=0}^r(-1)^{\ell} \Delta^{(\ell)}(\sum_{j=1}^r t_j^2 e_j, e)=\prod_{j=1}^r(1-t_j^2) =\sum_{\ell=0}^r (-1)^{\ell}\sum_{1 \leq j_1 <\ldots < j_{\ell}\leq r} t_{j_1}^2 \cdots t_{j_{\ell}}^2.\eeqn 
Consequently, $\Delta^{(\ell)}(z,z)=\sigma_\ell(t^2),\; \ell=1,\ldots,r,$ where $t^2=(t_1^2,\ldots,t_r^2).$ Here, $\sigma_{\ell}(t)=\sum_{1 \leq j_1 < \cdots < j_\ell \leq r} t_{j_1}\cdots t_{j_\ell}$ denote the $\ell$-th elementary symmetric polynomial in $t=(t_1,\ldots,t_r).$
Set $\sigma_0(t)=1,$ then $\prod_{i=1}^r(1+t_i x)=\sum_{\ell=0}^r \sigma_\ell(t)x^\ell.$ Let $u_i=1-t_i$ for $i=1,\ldots,r$ and $u=(u_1,\ldots,u_r).$ We get \beqn \sum_{n=0}^r \sigma_n(u) x^n &=& \prod_{i=1}^r (1+u_i x)= \prod_{i=1}^r (1+(1-t_i)x)=\prod_{i=1}^r (1+x-t_i x) \\
&=& \prod_{i=1}^r  \left(1+x\right) \left(1+t_i \left(\frac{-x}{1+x}\right)\right)= \sum_{\ell=0}^r (-1)^\ell \sigma_\ell(t) x^\ell (1-x)^{r-\ell}\\
&=& \sum_{n=0}^r \left( \sum_{\ell=0}^n (-1)^\ell \binom{r-\ell}{n-\ell}\sigma_\ell(t)\right) x^n.\eeqn
 Therefore, \beq \label{sym u-t}\sigma_n(u)= \sum_{\ell=0}^n (-1)^\ell \binom{r-\ell}{n-\ell}\sigma_\ell(t),\;\; n=1,\ldots,r.\eeq

The following result describes the classical Cartan domain $\Omega$ in terms of $\Delta^{(\ell)}(z,z).$ This description is known and recorded in \cite[Proposition 4.16]{Ls1977}. For completeness, we provide a proof that is consistent with the terminology and setup adopted in this paper.

\begin{lemma}\label{domain description}
Let $\Omega$ be a classical Cartan domain. Then
\begin{align*}
   & \Omega =\left\{ z\in \mathbb C^d: \sum_{\ell=0}^n (-1)^\ell \binom{r-\ell}{n-\ell} \Delta^{(\ell)}(z,z) > 0\;\; \text{for all } n=1,\ldots, r\right\} \; \text{ and}\\
&\overline{\Omega} =\left\{ z\in \mathbb C^d: \sum_{\ell=0}^n (-1)^\ell \binom{r-\ell}{n-\ell} \Delta^{(\ell)}(z,z) \geq 0\;\; \text{for all } n=1,\ldots, r\right\}.
\end{align*}
\end{lemma}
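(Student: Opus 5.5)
The plan is to reduce everything to the singular values via the polar decomposition \eqref{polardecom}. Fix $z\in\mathbb C^d\approx Z$ and write $z=k\cdot\sum_{j=1}^r t_je_j$ with $k\in\mathbb K$ and $t_1\ge\cdots\ge t_r\ge 0$. Since each $\Delta^{(\ell)}$ is $\mathbb K$-invariant, being a multiple of the $\mathbb K$-invariant kernel $K_{(\ell)}$ by \eqref{delk}, the computation preceding \eqref{sym u-t} gives $\Delta^{(\ell)}(z,z)=\sigma_\ell(t^2)$. This computation holds for every $z\in Z$, not only for $z\in\Omega$, because it only uses the polar decomposition and \cite[Lemma 3.2]{FK1990}, both of which are valid on all of $Z$.

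Next I apply \eqref{sym u-t} with $t$ replaced by $t^2$. Setting $u_i=1-t_i^2$, the $n$-th defining expression becomes
\[
\sum_{\ell=0}^n(-1)^\ell\binom{r-\ell}{n-\ell}\Delta^{(\ell)}(z,z)=\sigma_n(u),\qquad u=(1-t_1^2,\ldots,1-t_r^2).
\]
The lemma therefore reduces to two purely real statements about $u\in\mathbb R^r$:
\begin{itemize}
\item[(a)] $\sigma_n(u)>0$ for all $n=1,\ldots,r$ if and only if all $u_i>0$;
\item[(b)] $\sigma_n(u)\ge 0$ for all $n$ if and only if all $u_i\ge0$.
\end{itemize}
Since $\Omega=\{z: t_1<1\}$, which means all $u_i>0$, and $\overline\Omega=\{z: t_1\le 1\}$, which means all $u_i\ge0$, statements (a) and (b) give the two descriptions.

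For (a) and (b), the forward implications are trivial, since the $\sigma_n$ are sums of products of the $u_i$. For the converses I use the polynomial $p(x)=\prod_i(x+u_i)=\sum_{n}\sigma_n(u)x^{r-n}$, whose roots are $-u_i$.
\begin{itemize}
\item In case (a), all coefficients are positive, so $p(x)>0$ for $x\ge0$. Hence no root $-u_i$ satisfies $-u_i\ge 0$, that is, every $u_i>0$.
\item In case (b), $p(x)\ge x^r>0$ for $x>0$. Hence all roots satisfy $-u_i\le 0$.
\end{itemize}
Here it is essential that the $u_i$ are real, which holds because they come from real singular values. This is the standard Descartes/Newton-type argument.

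The main obstacle I expect is justifying that $\Delta^{(\ell)}(z,z)=\sigma_\ell(t^2)$ holds outside $\Omega$, since the displayed identity in the paper is stated on $\Omega$. I would handle this by noting that $\Delta^{(\ell)}(z,z)$ is a polynomial in $(z,\bar z)$ and is $\mathbb K$-invariant. Its restriction to $\sum t_je_j$ equals $\sigma_\ell(t^2)$ for small $t$, and both sides are polynomials in $t$, so by polynomial identity they agree for all $t$, and invariance then extends the equality to all $z$.
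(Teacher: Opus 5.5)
Your proof is correct and follows the paper's argument: you use the polar decomposition and \eqref{sym u-t} to rewrite the $n$-th expression as $\sigma_n(u)$ with $u_i=1-t_i^2$, and then locate the real roots $-u_i$ of $p(x)=\prod_i(x+u_i)$. The only differences are minor: you treat the strict case directly, via positivity of $p$ on $[0,\infty)$, where the paper passes through the closed case and uses $f_r(z)>0$; and you explicitly justify that $\Delta^{(\ell)}(z,z)=\sigma_\ell(t^2)$ holds outside $\Omega$, which the paper uses tacitly.
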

\begin{proof}
Let $z=k \cdot \left(\sum_{i=1}^r t_i e_i\right)$ be the polar decomposition of $z\in Z.$ 
Consider $u_i=1-t_i^2$ for $i=1,\ldots,r.$ Then, for each $1 \leq n \leq r,$ \beqn f_n(z)&:=&\sum_{\ell=0}^n (-1)^\ell \binom{r-\ell}{n-\ell} \Delta^{(\ell)}(z,z)
= \sum_{\ell=0}^n (-1)^\ell \binom{r-\ell}{n-\ell} \sigma_\ell(t^2)\overset{\eqref{sym u-t}}{=} \sigma_n(u).\eeqn
 Consider the polynomial $$p(x):=\prod_{i=1}^r (u_i+x)=x^r \prod_{i=1}^r \left ( 1+ u_i \frac{1}{x}\right)=\sum_{n=0}^r \sigma_n(u) x^{r-n}.$$ If $f_n(z) =\sigma_n(u)\geq 0$ for all $n=1,\ldots,r$, then roots of $p(x)$ must be non-positive. On the other hand, for each $i=1,\ldots, r$, $x=-u_i$ are roots of $p(x)$. Therefore, $u_i\geq 0,$ that is, $ t_i \leq 1$ for all $i=1,\ldots, r.$ This shows that $z\in \overline{\Omega}$. The reverse inclusion is immediate from the fact that $u_i \geq 0$ for all $i=1,\ldots,r$ implies that $f_n(z)=\sigma_n(u)\geq 0$ for all $n=1,\dots,r.$

 Let $f_n(z)>0$ for all $n=1,\ldots,r.$ The first part of the proof shows that $z\in \overline{\Omega}.$ Since $f_r(z)=\prod_{i=1}^r(1-t_i) >0,$ none of the $t_i$ is equal to $1,$ that is, $z\in \Omega.$ 
 For the reverse inclusion, let $z\in \Omega$, then $t_i <1$ for each $i.$ Thus $u_i=1-t_i^2 >0$ for all $i$, and therefore $f_n (z)=\sigma_n(u) >0$ for all $n=1,\ldots,r.$
  This completes the proof.
\end{proof}

For any sesqui-analytic polynomial $p(z,w)=\sum_{\alpha, \beta\in \mathbb Z_+^d} a_{\alpha\beta} z^\alpha \overline{w}^\beta$ and $d$-tuple $\boldsymbol T$ of commuting bounded linear operator $T_1, \ldots, T_d,$ we set 
$$p(z,w)(\boldsymbol T, \boldsymbol T^*):=\sum_{\alpha, \beta \in \mathbb Z_+^d} a_{\alpha \beta} \boldsymbol T^{*\beta} \boldsymbol T^\alpha.$$
Therefore, $\sum_{i=1}^n p_i(z)\overline{q_i(w)}(\boldsymbol T,\boldsymbol T^*)=\sum_{i=1}^n q_i(\boldsymbol T)^* p_i(\boldsymbol T)$ for polynomials $p_1,\ldots, p_n,q_1,\ldots, q_n$ in $d$-variables. 
 We are now ready to define a notion of a contraction associated with the classical Cartan domain.
\begin{defn}
 Let $\boldsymbol T=(T_1,\ldots, T_d)$ be a commuting $d$-tuple of bounded linear operators defined on a Hilbert space $\mathcal{H}.$ Then $\boldsymbol T$ is said to be a Cartan contraction associated with the domain $\Omega$ of rank $r$ if it satisfies the following relations $$\sum_{\ell=0}^k (-1)^\ell \binom{r-\ell}{k-\ell}\Delta^{(\ell)} (z,w)(\boldsymbol T,\boldsymbol T^*) \geq 0 \; \; \text{ for all } k=1,\ldots,r.$$
\end{defn}

 For the remainder of this paper, a commuting $d$-tuple that is a Cartan contraction associated with the domain $\Omega$ will be referred to as a {\it contractive $d$-tuple}. When $r=1$, the above notion reduces to $(1-\sum_{i=1}^dz_i \bar{w}_i)(\boldsymbol T, \boldsymbol T^*) \geq 0,$ that is, $\boldsymbol T$ is a spherical contraction.

  \begin{remark}
      Note that $$\sum_{\ell=0}^k (-1)^\ell \binom{r-\ell}{k-\ell}\Delta^{(\ell)} (z,w)(\boldsymbol T,\boldsymbol T^*) = 0 \; \; \text{ for all } k=1,\ldots,r$$ if and only if $\Delta^{(\ell)}(z,w)(\boldsymbol T, \boldsymbol T^*)=\binom{r}{\ell} I$ for all $\ell=1,\ldots,r,$ equivalently, $\boldsymbol T$ is a Cartan isometry associated to the domain $\Omega$ (\cite[Theorem 2.4]{KMP2025}).
  \end{remark} 

  It is known that the Taylor joint spectrum of a spherical contraction is contained in the closed Euclidean unit ball. In a similar spirit, we establish an analogous result in the setting of classical Cartan domains.
\begin{proposition}\label{spectrumcontraction}
    If $\boldsymbol T$ is a contractive $d$-tuple, then the Taylor joint spectrum $\sigma(\boldsymbol T )$  of $\boldsymbol T$ is  contained in $ \overline{\Omega}.$
\end{proposition}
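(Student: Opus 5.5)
Since the Taylor spectrum is compact and its boundary is contained in the joint approximate point spectrum, I will first reduce to $\sigma_\pi(\boldsymbol T)$. The key point is that $\overline{\Omega}$ is polynomially convex (it is a compact convex balanced set, the closed unit ball of a norm), so $\overline{\Omega}$ contains the polynomially convex hull of any compact set it contains. Combined with the fact that $\sigma(\boldsymbol T)$ and $\sigma_\pi(\boldsymbol T)$ have the same polynomially convex hull, because $\partial\sigma(\boldsymbol T)\subseteq\sigma_\pi(\boldsymbol T)$, this gives: if $\sigma_\pi(\boldsymbol T)\subseteq\overline{\Omega}$, then $\sigma(\boldsymbol T)\subseteq\widehat{\sigma(\boldsymbol T)}=\widehat{\sigma_\pi(\boldsymbol T)}\subseteq\overline{\Omega}$. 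So it suffices to show $\sigma_\pi(\boldsymbol T)\subseteq\overline{\Omega}$.

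Take $\lambda\in\sigma_\pi(\boldsymbol T)$ and unit vectors $x_m$ with $\|(T_i-\lambda_i)x_m\|\to 0$ for every $i$. Iterating this, $\|(p(\boldsymbol T)-p(\lambda))x_m\|\to 0$ for every polynomial $p$. For polynomials $p,q$ we then get
$$\langle q(\boldsymbol T)^*p(\boldsymbol T)x_m,x_m\rangle=\langle p(\boldsymbol T)x_m,q(\boldsymbol T)x_m\rangle\to p(\lambda)\overline{q(\lambda)}.$$
Every sesqui-analytic polynomial is a finite sum of terms $p_i(z)\overline{q_i(w)}$, and by construction $p_i(z)\overline{q_i(w)}(\boldsymbol T,\boldsymbol T^*)=q_i(\boldsymbol T)^*p_i(\boldsymbol T)$. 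Hence $\langle P(z,w)(\boldsymbol T,\boldsymbol T^*)x_m,x_m\rangle\to P(\lambda,\lambda)$ for every sesqui-analytic polynomial $P$.

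Apply this to $P_k(z,w)=\sum_{\ell=0}^k(-1)^\ell\binom{r-\ell}{k-\ell}\Delta^{(\ell)}(z,w)$ for $k=1,\ldots,r$. The contractivity hypothesis says $P_k(z,w)(\boldsymbol T,\boldsymbol T^*)\geq 0$, so each inner product $\langle P_k(z,w)(\boldsymbol T,\boldsymbol T^*)x_m,x_m\rangle$ is nonnegative, and in the limit $P_k(\lambda,\lambda)\geq 0$. This holds for all $k=1,\ldots,r$, so Lemma \ref{domain description} gives $\lambda\in\overline{\Omega}$.

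The only step needing care is the reduction from the Taylor spectrum to the approximate point spectrum. It rests on two standard facts: the boundary of the Taylor spectrum lies in $\sigma_\pi$, and the closed unit ball of a norm on $\mathbb C^d$ is polynomially convex, since it is convex and compact and hence an intersection of half-spaces given by linear functionals. I will cite the first and note the second briefly.
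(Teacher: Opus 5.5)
Your proof is correct and follows essentially the same route as the paper: approximate eigenvectors show that every $\lambda\in\sigma_\pi(\boldsymbol T)$ satisfies the inequalities of Lemma~\ref{domain description}, and polynomial convexity of $\overline{\Omega}$ then carries the inclusion over to $\sigma(\boldsymbol T)$. The only differences are in the final step. The paper cites Slodkowski--Zelazko directly for $\sigma(\boldsymbol T)\subseteq\widehat{\sigma_\pi(\boldsymbol T)}$, whereas you derive it from $\partial\sigma(\boldsymbol T)\subseteq\sigma_\pi(\boldsymbol T)$, implicitly using the maximum modulus principle to see that a compact set and its boundary have the same polynomial hull. You also justify the polynomial convexity of $\overline{\Omega}$ through its convexity instead of by citation.
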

\begin{proof}
     Let $\sigma_\pi(\boldsymbol T)$ be the approximate point spectrum of $\boldsymbol T$. If $z \in \sigma_{\pi}(\boldsymbol{T})$ then there exists a sequence of unit vectors $x_n$ such that $\lim_{n \to \infty}(\boldsymbol T^\alpha- z^\alpha)x_n= 0$ for all $\alpha \in \N^d$ (see \cite{RC1988}).
 So, for any polynomial  $p$  in $d$-variables,  $\lim_{n\to \infty}\big(p(\boldsymbol{T})-p(z)\big)x_n=0$ (see \cite[p. 187]{GR2006}). Since $\big |\|p(\boldsymbol{T})x_n\|-|p(z)|\big| \leq \|p(\boldsymbol{T})x_n -p(z)x_n\|,$ it follows that $\|p(\boldsymbol{T})x_n\| \to |p(z)|$ as $n \to \infty$.
 Thus, if $z \in \sigma_{\pi}(\boldsymbol T)$, then for each $1 \leq  k \leq r$,
 $$\sum_{\ell=0}^k (-1)^\ell \binom{r-\ell}{k-\ell} \Delta^{(\ell)}(z,z)= \lim_{n \to \infty} \sum_{\ell=0}^k (-1)^\ell \binom{r-\ell}{k-\ell} \left \langle \Delta^{(\ell)} (z,w)(\boldsymbol T, \boldsymbol T^*)x_n, x_n \right\rangle\geq 0.$$
 By Lemma \ref{domain description}, $\sigma_{\pi}(\boldsymbol T)\subseteq \overline{\Omega}.$ Note that $\sigma(\boldsymbol T) $ is contained in the polynomial convex hull of $\sigma_{\pi}(\boldsymbol T)$ (see \cite{SZ1974}).
 Since $\overline{\Omega}$ is polynomially convex (\cite[p. 53]{UP1996}), $\sigma(\boldsymbol T) \subseteq \overline{\Omega}.$
\end{proof}

%Now, we see some natural examples of Cartan contractions. 
We set the following notations for our convenience. For any subset $L$ of $R=\{1,2\ldots,r\},$ let $\chi_L$ be the $r$-tuple with only nonzero entry $1$ at the $i$th place if $i \in L.$ Let $(x)^{\underline \ell}=x(x-1)\ldots (x-\ell+1)$  and $(x)^{\bar{\ell}}=x(x+1)\cdots (x+\ell-1)$ denote the falling and rising factorials, respectively.

Recall that the weighted Bergman space $\mathcal{H}^\nu(\Omega)$ is a reproducing kernel Hilbert space with the reproducing kernel $K^\nu(z,w)=\Delta(z,w)^{-\nu}$ and $\boldsymbol M^{(\nu)}_{z}$ denotes the tuple of multiplication operators by the coordinate functions on $\mathcal{H}^{\nu}(\Omega),$ where $\nu \in \{\frac{a}{2}(j-1): j=1,\ldots,r\}\cup(\frac{a}{2}(r-1), \infty)$.
For each $1\leq \ell \leq r,$ the operator $\Delta^{(\ell)}(z,w)(\boldsymbol M_z^{(\nu)}, \boldsymbol {M_z^{(\nu)*}})$ is block diagonal with respect to the decomposition $\oplus_{\m} \mathcal{P}_{\m}$ (\cite[Theorem 22]{Up2021}). In fact, for any $p \in \mathcal{P}_{\m},$   
$$ \Delta^{(\ell)}(z,w)(\boldsymbol M_z^{(\nu)}, \boldsymbol {M_z^{(\nu)*}})p=\sum_{|L|=\ell} c_{\m}(L) \frac{(\frac{d}{r})_{\m+\chi_L}}{(\frac{d}{r})_{\m}}\frac{(\nu)_{\m}}{(\nu)_{\m+\chi_L}}\;p,$$
where $c_{\m}(L)=\prod_{i \in L \not \ni j} \frac{m_i-m_j+\frac{a}{2}(j-i+1)}{m_i-m_j+\frac{a}{2}(j-i)}.$ Note that $c_{\m}(L)=0$, whenever $\m+\chi_L$ is not a signature. In the following result, we determine those values of $\nu$ for which $\boldsymbol M_z^{(\nu)}$ is a contractive $d$-tuple.

\begin{theorem}\label{contractionpoint}
      $\boldsymbol M_z^{\nu}$ is a contractive $d$-tuple if and only if 
      $$\nu \in \left\{\frac{d}{r}, \frac{d}{r}+\frac{a}{2}, \cdots, \frac{d}{r}+\frac{a}{2}(r-1)\right\}\bigcup \left( \frac{d}{r}+\frac{a}{2}(r-1), \infty\right).$$
 \end{theorem}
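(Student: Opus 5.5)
Since each $\Delta^{(\ell)}(z,w)(\boldsymbol M_z^{(\nu)},\boldsymbol M_z^{(\nu)*})$ acts on $\mathcal P_{\m}$ as a scalar (by \cite[Theorem 22]{Up2021}), the operator inequalities in the definition of a contractive $d$-tuple reduce to scalar inequalities, one for each signature $\m$. For each $k=1,\ldots,r$ and each $\m$ we need
$$F_k(\m):=\sum_{\ell=0}^k(-1)^\ell\binom{r-\ell}{k-\ell}\sum_{|L|=\ell}c_{\m}(L)\frac{(\frac dr)_{\m+\chi_L}}{(\frac dr)_{\m}}\frac{(\nu)_{\m}}{(\nu)_{\m+\chi_L}}\ge 0 .$$
Writing the ratios of Pochhammer symbols explicitly gives
$$\frac{(\frac dr)_{\m+\chi_L}}{(\frac dr)_{\m}}\frac{(\nu)_{\m}}{(\nu)_{\m+\chi_L}}=\prod_{i\in L}\frac{\frac dr+m_i-\frac a2(i-1)}{\nu+m_i-\frac a2(i-1)}.$$
Set $x_i=m_i-\frac a2(i-1)$. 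Then $F_k(\m)$ becomes an explicit rational function in $x_1,\ldots,x_r$ and $\nu$.

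The key step is to find a closed form for these alternating sums. The model case is $\nu=\frac dr$, the Hardy space, which is a Cartan isometry (cf.\ the Remark and \cite{KMP2025}). There every ratio equals $1$, so $\sum_{|L|=\ell}c_{\m}(L)=\binom r\ell$ for every $\m$ (this is the identity in \cite{KMP2025}) and hence $F_k\equiv0$. Guided by this, I would compute $F_k(\m)$ using a partial-fraction or Lagrange-interpolation identity for the weights $c_{\m}(L)$, which are the Pieri coefficients for Jack polynomials. The expected result is
$$F_k(\m)=\sum_{|J|=k} C_{\m}(J)\prod_{j\in J}\frac{\nu-\frac dr}{\nu+x_j},$$
or a similar expression whose sign is governed by factors $\nu-\frac dr-\frac a2(j-1)$ together with positive quantities. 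Deriving this identity is the main obstacle. To get it, I would first try $r=2$ by direct computation to guess the general formula. For general $r$ I would use generating functions: consider $\sum_k F_k(\m)y^k$ and exploit that $\sum_L c_{\m}(L)\prod_{i\in L}f(x_i)$ factorizes as a Jack–Pieri type product.

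Once the closed form is available, sufficiency follows. If $\nu>\frac dr+\frac a2(r-1)$, every factor is positive. If $\nu=\frac dr+\frac a2(j_0-1)$ is one of the discrete points, the possibly negative terms are multiplied by $c_{\m}(L)$ factors or by $(\nu)_{\m}$ terms that vanish on the relevant signatures. Note that $\mathcal H^\nu$ only involves signatures with $(\nu)_{\m}\neq 0$, that is, $m_{j}=0$ for $j>j_0$ in the discrete Wallach case.

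For necessity, I would test $k=r$ (or $k=1$) on specially chosen signatures. If $\nu\le\frac dr+\frac a2(r-1)$ is not a listed point, take $\m=(N,\ldots,N,0,\ldots,0)$ with the cut placed so that exactly one factor of the form $\nu-\frac dr-\frac a2(j-1)$ is negative while the others stay positive. Letting $N\to\infty$ or taking $\m=0$ should then force $F_k(\m)<0$. The simplest check is $\m=\underline 0$, where $F_1(\underline0)=r-\frac{d}{\nu}$, which is nonnegative iff $\nu\ge\frac dr$. This already rules out $\nu<\frac dr$. The gaps between consecutive discrete points would be handled with signatures whose first $j$ entries are nonzero.
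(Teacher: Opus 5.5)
The sufficiency half of your proposal is not proved. It rests entirely on a closed form for $F_k(\m)$ that you do not derive, and the specific form you guess cannot be right. Your $\m=\underline 0$ computation in fact extends to every $k$. Only $L=\{1,\ldots,\ell\}$ has $c_{\underline 0}(L)\neq 0$, with value $\binom r\ell$. Using $\binom{r-\ell}{k-\ell}\binom r\ell=\binom rk\binom k\ell$, the alternating sum becomes a terminating ${}_2F_1$ at $1$, and Chu--Vandermonde gives
\[
F_k(\underline 0)=\binom rk\prod_{j=1}^k\frac{\nu-\frac dr-\frac a2(j-1)}{\nu-\frac a2(j-1)}.
\]
For $r\ge 2$, $k=2$ and $\frac dr<\nu<\frac dr+\frac a2$, this is negative. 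By contrast, $\sum_{|J|=k}C_{\m}(J)\prod_{j\in J}\frac{\nu-d/r}{\nu+x_j}$ with nonnegative $C_{\m}(J)$ (as your sufficiency argument presupposes) would be nonnegative for every $\nu\ge\frac dr$, which contradicts the theorem itself. Your hedged alternative is what happens at $\m=\underline 0$, but you give no route to it for general $\m$.

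Your mechanism at the discrete points also rests on a confusion. Since $\frac dr+\frac a2(j_0-1)\ge\frac dr=1+b+\frac a2(r-1)>\frac a2(r-1)$, these are not the discrete Wallach points $\frac a2(j-1)$. Hence $(\nu)_{\m}>0$ for every $\m$, and $\mathcal H^\nu(\Omega)$ contains every $\mathcal P_{\m}$. Nothing vanishes; nonnegativity comes from cancellation inside the alternating sums (already $F_k(\underline 0)=0$ for $k\ge j_0$).

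The paper avoids any closed form. For the listed $\nu$, $\boldsymbol M_z^{(\nu)}$ is subnormal by \cite[Theorem 6.1]{AZ2003}. So $\bigl\langle\sum_{\ell=0}^k(-1)^\ell\binom{r-\ell}{k-\ell}\Delta^{(\ell)}(z,w)(\boldsymbol M_z^{(\nu)},\boldsymbol M_z^{(\nu)*})x,x\bigr\rangle$ is the integral of $\sum_{\ell=0}^k(-1)^\ell\binom{r-\ell}{k-\ell}\Delta^{(\ell)}(z,z)$ against a positive measure on $\overline\Omega$, coming from the minimal normal extension. That integrand is nonnegative on $\overline\Omega$ by Lemma \ref{domain description}. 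A purely algebraic proof of $F_k(\m)\ge 0$ for all $\m$ would be a genuinely different and much harder route, and you have not supplied it.

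On the necessity side, you stop at $k=1$ and leave the gaps between consecutive discrete points to an unverified plan with $\m=(N,\ldots,N,0,\ldots,0)$ and $N\to\infty$. No other signature is needed. Contractive tuples are bounded, so $\nu>\frac a2(r-1)$ and every denominator in the displayed formula is positive. If $\nu<\frac dr$, the case $k=1$ fails, as you noted. If $\frac dr+\frac a2(j_0-1)<\nu<\frac dr+\frac a2 j_0$ with $1\le j_0\le r-1$, then $k=j_0+1$ produces exactly one negative factor, so $F_{j_0+1}(\underline 0)<0$. This is the paper's argument: test the constant function against all $k$ at once.
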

\begin{proof}
    Recall that $\boldsymbol M_z^{(\nu)}$ is bounded if and only if $\nu > \frac{a}{2}(r-1).$ Since a contractive $d$-tuple is necessarily bounded, we consider $\nu >\frac{a}{2}(r-1).$ 
     Let $\boldsymbol M_z^{(\nu)}$ be a contractive $d$-tuple.
     %Fix $\m=(0,0,\ldots,0).$ 
     If $L=\{1,2,\ldots,\ell\},$ then for $\m=(0,0,\ldots,0)$
     \beqn c_{\underline 0}(L) &=& \prod_{i=1}^\ell \prod_{j=\ell+1}^r \frac{j-i+1}{j-i}= \prod_{i=1}^\ell \frac{r-i+1}{\ell+1-i}=\binom{r}{\ell}.\eeqn
     On the other hand, if $L\subseteq \{1,2,\ldots, r\}$ with $|L|=\ell$ and  $L\neq \{1,2,\ldots,\ell\},$ then there exists $ i\in L$ such that $i\geq 2$ and $i-1\notin L$. Taking $j=i-1,$ we get $\frac{j-i+1}{j-i}=0$ and hence $c_{\underline 0}(L)=0.$ Thus, $$c_{\underline 0}(L)=\begin{cases}
         \binom{r}{\ell}, &\text{ if } L=\{1,2,\ldots,\ell\},\\
         0, &\text{ otherwise}.
     \end{cases}$$
 Thus, for any constant polynomial $p$ $$\Delta^{(\ell)}(\boldsymbol M_z^{(\nu)}, \boldsymbol M_z^{(\nu) *})p=\binom{r}{\ell} \prod_{j=1}^\ell \frac{\frac{d}{r}-\frac{a}{2}(j-1)}{\nu-\frac{a}{2}(j-1)}\;p =\binom{r}{\ell} \prod_{j=1}^\ell\frac{\frac{d/r}{a/2}-j+1}{\frac{\nu}{a/2}-j+1}\;p=\binom{r}{\ell}\frac{\left(\frac{d/r}{a/2}\right)^{\underline \ell}}{\left(\frac{\nu}{a/2}\right)^{\underline \ell}} \;p.$$
     By using the identities:  $\binom{r-\ell}{k-\ell} \binom{r}{\ell}=\binom{r}{k} \binom{k}{\ell}, \;\; (x)^{\underline \ell}=(-1)^\ell (-x)^{\bar{\ell}},\;\; (-1)^\ell\binom{k}{\ell}=\frac{(-k)^{\bar{\ell}}}{\ell!}$, we get 

     \beqn \sum_{\ell=0}^k (-1)^\ell \binom{r-\ell}{k-\ell} \Delta^{(\ell)}(\boldsymbol M_z^{(\nu)}, \boldsymbol M_z^{(\nu)*}) \; p&=&\sum_{\ell=0}^k (-1)^\ell \binom{r-\ell}{k-\ell} \binom{r}{\ell}\frac{\left(\frac{d/r}{a/2}\right)^{\underline \ell}}{\left(\frac{\nu}{a/2}\right)^{\underline \ell}} \; p\\
     &=& \binom{r}{k}\sum_{\ell=0}^k \frac{(-k)^{\bar{\ell}}}{\ell!} \frac{\left(-\frac{d/r}{a/2}\right)^{\bar{\ell}}}{\left(-\frac{\nu}{a/2}\right)^{\bar{\ell}}}\; p.\eeqn

    Let $_2F_1(a,b; c;z)=\sum_{\ell=0}^\infty \frac{(a)^{\bar{\ell}}(b)^{\bar{\ell}}}{(c)^{\bar{\ell}} }\frac{z^\ell}{\ell!}$ be the hypergeometric series. By using the standard Chu-Vandermonde identity: $_2F_1(-k, b;c;1)=\frac{(c-b)^{\bar{k}}}{(c)^{\bar{k}}},$ we get \beqn \sum_{\ell=0}^k \frac{(-k)^{\bar{\ell}}}{\ell!} \frac{\left(-\frac{d/r}{a/2}\right)^{\bar{\ell}}}{\left(-\frac{\nu}{a/2}\right)^{\bar{\ell}}}&=&_2F_1\left(-k, -\frac{d/r}{a/2}; -\frac{\nu}{a/2};1\right)=\frac{\left(\frac{d/r}{a/2}-\frac{\nu}{a/2}\right)^{\bar{k}}}{\left(-\frac{\nu}{a/2}\right)^{\bar{k}}}\\
     &=& \frac{\left(\frac{\nu-\frac{d}{r}}{a/2}\right)^{\underline{k}}}{\left(\frac{\nu}{a/2}\right)^{\underline{k}}}\\
     &=& \frac{\left(\nu-\frac{d}{r}\right)\left(\nu-\frac{d}{r}-\frac{a}{2}\right)\cdots \left(\nu-\frac{d}{r}-\frac{a}{2}(k-1)\right)}{\nu \left(\nu-\frac{a}{2}\right)\cdots\left(\nu-\frac{a}{2}(k-1)\right)}.\eeqn
The denominator in the last expression is strictly positive as $\nu> \frac{a}{2}(r-1).$ Since $\boldsymbol M_z^{(\nu)}$ is a contractive $d$-tuple, $\sum_{\ell=0}^k (-1)^\ell \binom{r-\ell}{k-\ell} \Delta^{(\ell)}(\boldsymbol M_z^{(\nu)}, \boldsymbol M_z^{(\nu)*}) \geq 0$ for all $1 \leq k \leq r$ implies that the parameter $\nu$ either belongs to the discrete set $\{\frac{d}{r}, \frac{d}{r}+\frac{a}{2}, \cdots, \frac{d}{r}+\frac{a}{2}(r-1)\}$ or satisfies $\nu > \frac{d}{r}+\frac{a}{2}(r-1).$

Conversely, if $\nu$ is either any of the discrete values $\frac{d}{r},\frac{d}{r}+\frac{a}{2}, \ldots, \frac{d}{r}+\frac{a}{2}(r-1)$ or $\nu > \frac{d}{r}+\frac{a}{2}(r-1)$, then by \cite[Theorem 6.1]{AZ2003}, $\boldsymbol M_z^{(\nu)}$ is subnormal. Consequently, for these $\nu$, an application of the spectral theorem for the minimal normal extension of $\boldsymbol M_z^{(\nu)}$ together with Lemma \ref{domain description} shows that $\boldsymbol M_z^{(\nu)}$ is a contractive $d$-tuple. This completes the proof of the theorem.
\end{proof}

Recall that $\boldsymbol M^{(\alpha)}_z=(M^{(\alpha)}_{z_1},\ldots, M_{z_d}^{(\alpha)})$ is the $d$-tuple of multiplication operators by the coordinate functions on $\mathcal{H}^\alpha(\Omega),$ where $H^{\alpha}(\Omega)$ is the reproducing kernel Hilbert space associated with the $\mathbb K$-invariant kernel $$K^{\alpha}(z,w)=\sum_{\s} \alpha_{\s} K_{\s}(z,w), \qquad z,w \in \Omega, \;\alpha_{\underline 0}=1, \;\alpha_{\s} >0.$$
It is well known that the operators $\Delta^{(\ell)} (z,w)(\boldsymbol{M}_z^{(\alpha)}, \boldsymbol{M}_z^{(\alpha)*})$ acting on $\mathcal{H}^\alpha(\Omega)$ are block diagonals for all $1 \leq \ell \leq r$ with respect to the decomposition $\oplus_{\m\in \vec{\mathbb N}^r} \mathcal P_{\m}$
(cf. \cite[p. 5]{Up2021}, \cite[Lemma 3.1]{GKP2022}). 
Following the approach of \cite[Theorem 22]{Up2021}, we obtain the so called second eigenvalue formula for $\boldsymbol{M}_z^{(\alpha)}$.

\begin{proposition}
	Let $\Vec{\N^r}$ be the set of all signatures. Then for any $1 \leq \ell \leq r$, 
$$ \Delta^{(\ell)}(z,w)(\boldsymbol{M}_z^{(\alpha)},\boldsymbol{M}_z^{(\alpha)*})p=\sum_{\substack{|L|=\ell \\  \m+\chi_L \in \Vec{\N^r}}}\frac{(d/r)_{\m+\chi_L}}{(d/r)_{\m}}\frac{\alpha_{\m}}{\alpha_{\m+\chi_L}} c_{\m}(L)\;p, \qquad p \in \mathcal{P}_{\m} .$$  
\end{proposition}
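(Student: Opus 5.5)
The plan is to write $\Delta^{(\ell)}(z,w)(\boldsymbol M_z^{(\alpha)},\boldsymbol M_z^{(\alpha)*})=\sum_\beta \psi^{(\ell)}_\beta(\boldsymbol M)^*\psi^{(\ell)}_\beta(\boldsymbol M)$, up to the constant $\prod_{j=1}^\ell(1+\frac a2(j-1))$ from \eqref{delk}, and compute its action on $\mathcal P_{\m}$ by comparing the two inner products via \eqref{inner-product relation}. For $p\in\mathcal P_{\m}$, the product $\psi^{(\ell)}_\beta p$ lies in $\mathcal P_{\m}\cdot\mathcal P_{(\ell)}\subseteq\bigoplus_{|L|=\ell,\ \m+\chi_L\in\Vec{\N}^r}\mathcal P_{\m+\chi_L}$. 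This is the Pieri-type rule for multiplication by $\mathcal P_{(\ell)}$, which holds because $\mathcal P_{(\ell)}$ corresponds to the "column" signature. Let $P_{\m+\chi_L}$ denote the Fischer--Fock orthogonal projection; these projections are also orthogonal for $\inp{\cdot}{\cdot}_\alpha$, since the summands are orthogonal in both inner products.

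First, I compute the quadratic form. For $p,q\in\mathcal P_{\m}$,
\begin{align*}
\inp{\Delta^{(\ell)}(\boldsymbol M,\boldsymbol M^*)p}{q}_\alpha
&=c_\ell\sum_\beta\inp{\psi^{(\ell)}_\beta p}{\psi^{(\ell)}_\beta q}_\alpha\\
&=c_\ell\sum_{L}\frac{1}{\alpha_{\m+\chi_L}}\sum_\beta\inp{P_{\m+\chi_L}(\psi^{(\ell)}_\beta p)}{P_{\m+\chi_L}(\psi^{(\ell)}_\beta q)}_{\mathcal F}.
\end{align*}
The inner sum over $\beta$ defines a $\mathbb K$-invariant sesquilinear form on the irreducible space $\mathcal P_{\m}$, because $\sum_\beta \psi_\beta^{(\ell)}(z)\overline{\psi_\beta^{(\ell)}(w)}$ is $\mathbb K$-invariant. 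By Schur's lemma it therefore equals $\lambda_{\m}(L)\inp{p}{q}_{\mathcal F}=\lambda_{\m}(L)\,\alpha_{\m}\inp pq_\alpha$ for scalars $\lambda_{\m}(L)\ge 0$ that do not depend on $\alpha$. The operator is block diagonal, as recalled above, and this gives
\[\Delta^{(\ell)}(\boldsymbol M^{(\alpha)}_z,\boldsymbol M_z^{(\alpha)*})p=\sum_L c_\ell\lambda_{\m}(L)\frac{\alpha_{\m}}{\alpha_{\m+\chi_L}}\,p.\]

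It remains to identify $c_\ell\lambda_{\m}(L)=\frac{(d/r)_{\m+\chi_L}}{(d/r)_{\m}}c_{\m}(L)$. Since $\lambda_{\m}(L)$ is independent of $\alpha$, I will specialize to $\alpha_{\m}=(\nu)_{\m}$, i.e. $K^\alpha=K^\nu$. The formula quoted before Theorem \ref{contractionpoint} (\cite[Theorem 22]{Up2021}) gives, for every $\nu$ in the continuous Wallach range,
\[\sum_L c_\ell\lambda_{\m}(L)\frac{(\nu)_{\m}}{(\nu)_{\m+\chi_L}}=\sum_L c_{\m}(L)\frac{(d/r)_{\m+\chi_L}}{(d/r)_{\m}}\frac{(\nu)_{\m}}{(\nu)_{\m+\chi_L}}.\]

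The main obstacle is extracting the individual coefficients from this single scalar identity. The ratios $\frac{(\nu)_{\m}}{(\nu)_{\m+\chi_L}}=\prod_{i\in L}\bigl(\nu-\frac a2(i-1)+m_i\bigr)^{-1}$ are distinct rational functions of $\nu$ for distinct admissible $L$. Their pole sets $\{\frac a2(i-1)-m_i: i\in L\}$ differ, because the numbers $\frac a2(i-1)-m_i$ are strictly increasing in $i$ for a signature $\m$. Hence these functions are linearly independent over $\mathbb C$, and equality on an interval of $\nu$ forces equality of the coefficients for each $L$. If this independence argument turns out to be delicate, my fallback is to evaluate $\lambda_{\m}(L)$ directly through Upmeier's reproducing-kernel computation, which is the route of \cite[Theorem 22]{Up2021} with $(\nu)_{\m}$ replaced by $\alpha_{\m}$. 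Finally, when $\m+\chi_L$ is not a signature, the corresponding term is absent, consistent with $c_{\m}(L)=0$.
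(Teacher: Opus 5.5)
Your Schur-lemma reduction is correct. It shows that on $\mathcal P_{\m}$ the operator acts by $\sum_L \Lambda_{\m}(L)\,\alpha_{\m}/\alpha_{\m+\chi_L}$, where $\Lambda_{\m}(L)=c_\ell\lambda_{\m}(L)$ does not depend on $\alpha$. Because the ratios $\alpha_{\m}/\alpha_{\m+\chi_L}$ can be prescribed independently for different $L$, the proposition is equivalent to the termwise identity $\Lambda_{\m}(L)=\frac{(d/r)_{\m+\chi_L}}{(d/r)_{\m}}c_{\m}(L)$ for every admissible $L$.

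The gap is in how you extract these individual terms from the weighted Bergman formula. Distinct pole sets do not imply linear independence once more than two functions are involved. Put $\theta_i=\frac a2(i-1)-m_i$ and $f_L(\nu)=\prod_{i\in L}(\nu-\theta_i)^{-1}$. Multiplying by $\prod_{i=1}^r(\nu-\theta_i)$ turns $f_L$ into $\prod_{i\notin L}(\nu-\theta_i)$, a polynomial of degree $r-\ell$. So the $f_L$ span at most an $(r-\ell+1)$-dimensional space, while up to $\binom{r}{\ell}$ sets $L$ are admissible. Concretely, take $r=3$, $\ell=2$, $\m=(2,1,0)$. All three two-element sets are admissible, and since $(\theta_i-\theta_j)f_{\{i,j\}}=\frac{1}{\nu-\theta_i}-\frac{1}{\nu-\theta_j}$, one has
\[(\theta_1-\theta_2)f_{\{1,2\}}-(\theta_1-\theta_3)f_{\{1,3\}}+(\theta_2-\theta_3)f_{\{2,3\}}\equiv 0.\]
Hence the identity in $\nu$ determines $(\Lambda_{\m}(L))_L$ only modulo such relations. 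Your argument is valid for $\ell=1$ (simple partial fractions) and for $\ell=r$ (a single $L$), hence in rank $r\le 2$. It fails for $r\ge 3$ and $2\le \ell\le r-1$.

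So your fallback is not optional: the $\alpha$-free Fischer--Fock constants $\lambda_{\m}(L)$ must be computed directly. That is what the paper does, obtaining the formula by following the approach of \cite[Theorem 22]{Up2021} with $(\nu)_{\m}$ replaced by $\alpha_{\m}$. Your Schur-lemma step explains why this substitution is legitimate, since $\alpha$ enters only through $\alpha_{\m}/\alpha_{\m+\chi_L}$. One concrete way to carry it out:
\begin{itemize}
\item The $\mathbb K$-invariant kernel $K_{(\ell)}(z,w)K_{\m}(z,w)$ decomposes uniquely as $\sum_L\gamma_{\m}(L)K_{\m+\chi_L}(z,w)$.
\item Taking the Fischer--Fock trace of its $\mathcal P_{\m+\chi_L}$-component in two ways gives $\lambda_{\m}(L)\,d_{\m}=\gamma_{\m}(L)\,d_{\m+\chi_L}$.
\item The coefficients $\gamma_{\m}(L)$ follow by setting $w=e$, using $K_{\underline n}(z,e)=\frac{d_{\underline n}}{(d/r)_{\underline n}}\phi_{\underline n}(z)$, the expression of $\phi_{\underline n}$ through Selberg--Jack polynomials, and the Pieri rule for multiplying a Jack polynomial by the elementary symmetric function $e_\ell$.
\end{itemize}
As written, your proposal leaves exactly this step undone, so it does not yet prove the statement for $r\ge 3$.
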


\begin{corollary}
    The multiplication operator tuple $\boldsymbol M_z^{(\alpha)}$ defined on $\mathcal{H}^\alpha(\Omega)$ is a contractive $d$-tuple if and only if for any signature $\m,$ 
    \beq \label{eq 12}\sum_{\ell=0}^k (-1)^{k}\binom{r-\ell}{k-\ell}\sum_{|L|=\ell}\frac{(d/r)_{\m+\chi_L}}{(d/r)_{\m}}\frac{\alpha_{\m}}{\alpha_{\m+\chi_L}} c_{\m}(L) \geq 0,\; 1\leq k\leq r.\eeq
    Furthermore, $\boldsymbol M_z^{(\alpha)}$ is a subnormal contractive $d$-tuple if and only if $\boldsymbol M_z^{(\alpha)}$ is subnormal and $\sigma(\boldsymbol M_z^{(\alpha)})=\overline{\Omega}.$
\end{corollary}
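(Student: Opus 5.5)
The first assertion is a direct translation of the definition through the preceding Proposition. By definition, $\boldsymbol M_z^{(\alpha)}$ is a contractive $d$-tuple if and only if each operator $A_k:=\sum_{\ell=0}^k(-1)^\ell\binom{r-\ell}{k-\ell}\Delta^{(\ell)}(z,w)(\boldsymbol M_z^{(\alpha)},\boldsymbol M_z^{(\alpha)*})$ is positive for $1\le k\le r$. By the second eigenvalue formula, every $\Delta^{(\ell)}(z,w)(\boldsymbol M_z^{(\alpha)},\boldsymbol M_z^{(\alpha)*})$ acts on $\mathcal P_{\m}$ as a scalar. Hence $A_k$ is block diagonal with respect to the orthogonal decomposition $\mathcal H^\alpha(\Omega)=\oplus_{\m}\mathcal P_{\m}$, and on $\mathcal P_{\m}$ it is multiplication by the scalar
\[\lambda_k(\m):=\sum_{\ell=0}^k(-1)^\ell\binom{r-\ell}{k-\ell}\sum_{|L|=\ell}\frac{(d/r)_{\m+\chi_L}}{(d/r)_{\m}}\frac{\alpha_{\m}}{\alpha_{\m+\chi_L}}c_{\m}(L).\]
The $\ell=0$ term equals $\binom{r}{k}$. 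The restriction $\m+\chi_L\in\Vec{\N}^r$ can be dropped, because $c_{\m}(L)=0$ otherwise. Polynomials are dense and the $\mathcal P_{\m}$ are mutually orthogonal, so $A_k\ge0$ if and only if $\lambda_k(\m)\ge0$ for every $\m$. This is exactly \eqref{eq 12}, with the sign read as $(-1)^\ell$ inside the sum as in the definition; I would note this as a typo if the exponent $k$ is literal.

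For the second assertion, assume first that $\boldsymbol M_z^{(\alpha)}$ is subnormal and contractive. Proposition \ref{spectrumcontraction} gives $\sigma(\boldsymbol M_z^{(\alpha)})\subseteq\overline\Omega$. For the reverse inclusion, I would use that $\Omega\subseteq\sigma_p(\boldsymbol M_z^{(\alpha)*})$ up to conjugation, since the kernel functions $K^\alpha(\cdot,w)$ are joint eigenvectors of $\boldsymbol M_z^{(\alpha)*}$ with eigenvalue $\bar w$. The Taylor spectrum of $\boldsymbol T^*$ is the complex conjugate of that of $\boldsymbol T$, so $\Omega\subseteq\sigma(\boldsymbol M_z^{(\alpha)})$. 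Since the spectrum is closed, $\overline\Omega\subseteq\sigma(\boldsymbol M_z^{(\alpha)})$, and hence equality holds. One should also check that $K^\alpha(\cdot,w)\in\mathcal H^\alpha(\Omega)$ for all $w\in\Omega$. This holds because $\mathcal H^\alpha(\Omega)$ consists of functions on $\Omega$, so the series defining the kernel converges there.

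Conversely, suppose $\boldsymbol M_z^{(\alpha)}$ is subnormal with $\sigma(\boldsymbol M_z^{(\alpha)})=\overline\Omega$. Let $\boldsymbol N$ be the minimal normal extension on $\mathcal K\supseteq\mathcal H^\alpha(\Omega)$. I would invoke the standard fact that the spectrum of the minimal normal extension is contained in the Taylor spectrum of the subnormal tuple, $\sigma(\boldsymbol N)\subseteq\sigma(\boldsymbol M_z^{(\alpha)})$; this is due to Putinar in several variables. It follows that $\sigma(\boldsymbol N)\subseteq\overline\Omega$. Let $E$ be the spectral measure of $\boldsymbol N$. For $h\in\mathcal H^\alpha(\Omega)$ and any sesqui-analytic polynomial $p(z,w)=\sum a_{\alpha\beta}z^\alpha\bar w^\beta$, we have
\[\langle p(\boldsymbol M,\boldsymbol M^*)h,h\rangle=\sum a_{\alpha\beta}\langle \boldsymbol N^\alpha h,\boldsymbol N^\beta h\rangle=\int_{\overline\Omega}p(z,z)\,d\langle E(z)h,h\rangle.\]
Apply this to $p=\sum_{\ell=0}^k(-1)^\ell\binom{r-\ell}{k-\ell}\Delta^{(\ell)}$. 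By Lemma \ref{domain description}, the integrand $p(z,z)$ is nonnegative on $\overline\Omega$, so the integral is nonnegative and $\boldsymbol M_z^{(\alpha)}$ is contractive. This is the same argument already used at the end of the proof of Theorem \ref{contractionpoint}.

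The main obstacle is the spectral inclusion $\sigma(\boldsymbol N)\subseteq\sigma(\boldsymbol M_z^{(\alpha)})$ for the minimal normal extension in the multivariable Taylor setting. I would cite it from the literature rather than reprove it.
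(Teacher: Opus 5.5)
Your argument is correct and is essentially the one the paper relies on; the corollary is stated without a separate proof. The first part is the block-diagonal reading of the second eigenvalue formula, and you are right that the sign in \eqref{eq 12} should be $(-1)^{\ell}$, as in the definition of a contractive $d$-tuple. In the second part, Proposition~\ref{spectrumcontraction} together with the eigenvectors $K^\alpha(\cdot,w)$ of $\boldsymbol M_z^{(\alpha)*}$ gives $\sigma(\boldsymbol M_z^{(\alpha)})=\overline{\Omega}$; these vectors lie in $\mathcal H^\alpha(\Omega)$ simply by the reproducing-kernel property. Your converse, via the spectral inclusion for the minimal normal extension plus Lemma~\ref{domain description}, is exactly the argument used at the end of the proof of Theorem~\ref{contractionpoint}.
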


\section{Subnormality of $\mathbb K$-homogeneous operator tuples}\label{S3}
Recall that $\mathcal{H}^\alpha(\Omega)$ is the reproducing kernel Hilbert space of holomorphic functions on $\Omega$ determined by the $\mathbb K$-invariant kernel $K^\alpha(z,w)=\sum_{\s} \alpha_{\s}K_{\s}(z,w),$ where $\{\alpha_{\s}\}$ is a sequence of positive real numbers with $\alpha_{\underline 0}=1.$ One of the main results of this section establishes a criterion for the subnormality of the multiplication operator tuple $\boldsymbol M_z^{(\alpha)}$ on $\mathcal{H}^\alpha(\Omega).$ This characterization gives rise to a type of moment problem for a sequence of positive real numbers indexed by signatures. We refer to it as the {\it twisted moment problem}.

\subsection{Criterion for Subnormality}
We first briefly discuss subnormality in the rank-one case, namely, the Euclidean open unit ball $\mathbb B$. In this case, a subnormality criterion is obtained in \cite[Theorem 5.3]{CY2015}. However, we follow the approach of \cite{BM1996}, which characterizes the subnormality of weighted Bergman shifts on type-I domains.

Consider an $\mathcal U(d)$-invariant kernel $K^\delta(z,w)=\sum_{k\geq 0}\delta_k \inp{z}{w}^k$ with $\delta_0=1$ and $\delta_k>0$ for all $k.$ Let $\mathscr{M}_z$ be the $d$-tuple of multiplication operators by the coordinate functions on the reproducing kernel Hilbert space $\mathcal{H}^\delta(\mathbb B).$
If $\mathscr{M}_z$ is a subnormal spherical contraction, then there exists a probability measure $\mu$ supported on $\overline{\mathbb B}$ such that $\inp{f}{g}_\delta=\int_{\overline{\mathbb B}}f \bar{g}d\mu$ for all polynomials $f$ and $ g.$  Since the inner product on $\mathcal{ H}^\delta(\mathbb B)$ is $\mathcal{U}(d)$-invariant, by the Stone-Weierstrass theorem, the measure $\mu$ is $\mathcal{U}(d)$-invariant. Note that for any $\beta \in \mathbb Z_+^d$, $\|z^\beta\|_\delta^2= \frac{\beta!}{\delta_{|\beta|}\cdot |\beta|!}=\int_{\overline{\mathbb B}} |z^\beta|^2 d\mu.$ In particular, \beqn \|z_1^k\|_\delta^2=\int_{\overline{\mathbb B}} |z_1^k|^2 d\mu(z)=\frac{1}{\delta_k},\qquad \text{ for any } k\in \mathbb Z_+.\eeqn  
Conversely, assume that there is an $\mathcal{U}(d)$-invariant probability measure $\mu$ supported on $\overline{\mathbb B}$ with $\frac{1}{\delta_k}=\int_{\overline{\mathbb B}} |z_1^k|^2 d\mu(z)$ for all $k \in \mathbb Z_+.$ Since the space of homogeneous polynomials $\mathcal{P}_k$ of degree $k$ in $d$-variables is $\mathrm{span}\{ (u\cdot z)_1^k: u\in \mathcal{U}(d)\},$ the measure $\mu$ give rise to an $\mathcal{U}(d)$-invariant inner product on $\mathcal{P}_k.$ The Schur Lemma along with the condition $\frac{1}{\delta_k}=\int_{\overline{\mathbb B}} |z_1^k|^2 d\mu(z)$, gives that the inner product arises from the measure $\mu$ coincides with that of $\mathcal{H}^\delta(\mathbb B).$ Therefore, the $d$-tuple $\mathscr{M}_z$ is a subnormal spherical contraction.

Note that the set of $\mathcal{U}(d)$-invariant probability measures $\mu$ supported on $\overline{\mathbb B}$ is in one-to-one correspondence with the set of all probability measure $\nu$ supported on $[0,1]$ given by $\nu=\mu\circ \pi^{-1},$ where $\pi:\overline{\mathbb B} \to [0,1]$ is the map $\pi(z)=\sqrt{\sum_{i=1}^ d |z_i|^2}.$ 
Therefore, for any $\mathcal{U}(d)$-invariant probability measure $\mu$ supported on $\overline{\mathbb B}$ and $f\in L^1(\mu),$  we have $$\int_{\overline{\mathbb B}} fd\mu=\int_{[0,1]} \left( \int _{\mathcal{U}(d)} f(u\cdot (t,0,\ldots,0))du \right)d\nu(t),$$
where $du$ is the unique Haar measure on $\mathcal{U}(d).$ 
Let $\sigma$ be the normalized surface area measure on the unit sphere $\partial \mathbb B.$ Then, for any polynomial $p$ and $q$, $\int_{\partial \mathbb B} p \overline{q} d\sigma=\int_{\mathcal{U}(d)} p (u\cdot(1,0,\ldots,0))\overline{q (u\cdot(1,0,\ldots,0))}du$ (see \cite[p. 2.8]{BM1996}). 
Let $\mu$ be an $\mathcal{U}(d)$-invariant measure with $\frac{1}{\delta_k}=\int_{\overline{\mathbb B}}|z_1^k|^2 d\mu(z)$ for all $k \in \mathbb Z_+.$
Consider $\phi_k(z)=z_1^k,$ then
\beqn \int_{\overline{\mathbb B}}|\phi_k|^2 d\mu &= &\int_{[0,1]} \left(\int_{\mathcal{U}(d)} |\phi_k(u\cdot (t,0,\ldots,0))|^2 du\right) d\nu(t)\\
&=&\int_{[0,1]}t^{2k} \left( \int_{\partial \mathbb B} |\phi_k|^2  d\sigma \right) d\nu(t)=\frac{k!}{(d)_k}\int_{[0,1]} t^{2k} d\nu(t).\eeqn
Therefore, $\frac{1}{\delta_k}=\int_{[0,1]}\frac{k!}{(d)_k} t^{2k} d\nu(t)$, 
that is, the sequence $\left\{\frac{(d)_k}{\delta_k  k!}\right\}$ is a Hausdorff moment sequence. On the other hand, if $\left\{\frac{(d)_k}{\delta_k  k!}\right\}$ is a Hausdorff moment sequence with the representing measure $\nu,$ then we get a $\mathcal{U}(d)$-invariant probability measure $\mu$ as follows: $$\mu(A)=\int_{[0,1]} \left(\int_{\mathcal{U}(d)} \chi_{u^{-1}\cdot A}(t,0,\ldots,0)du\right) d\nu(t),$$ for all Borel subset $A$ of $\overline{\mathbb B}.$ Thus, $$\int_{\overline{\mathbb B}} |z_1^k|^2 d\mu=\int_{[0,1]}\left(\int_{\mathcal{U}(d)} |\phi_k(u \cdot (t,0,\ldots,0))|^2 du\right) d\nu(t)=\frac{1}{\delta_k},\qquad k\in \mathbb Z_+.$$

The preceding discussion establishes the equivalence of the following statements:
\begin{itemize}
        \item [(i)] The multiplication operator tuple $\mathscr{M}_z$ on $\mathcal{H}^\delta(\mathbb B)$ is subnormal.
        \item [(ii)] There is a unique $\mathcal{U}(d)$-invariant measure $\mu$ supported on $\overline{\mathbb B}$ such that $$\int_{\overline{\mathbb B}}|z_1^k|^2 d\mu(z)=\frac{1}{\delta_k},\qquad \text{ for all } k \in \mathbb Z_+.$$
        \item [(iii)] The sequence $\left\{\frac{(d)_k}{\delta_k  k!}\right\}$ is a Hausdorff moment sequence.
\end{itemize}

To derive a subnormality criterion for $\mathbb K$-invariant kernels,
we adopt the same strategy used in the above $\mathcal U(d)$-invariant case. To this end, we first recall the notion of elementary spherical functions.
 For any signature $\m$, there is, up to a scalar multiple, a unique function $\phi_{\m}$ in $\mathcal{P}_{\m}$ with the property $\mathcal{P}_{\m}=\mathrm{span}\{\phi_{\m}\circ k: k \in \mathbb K\}.$ This is called as elementary spherical function and given by $$\phi_{\m}(z)=\frac{(\frac{d}{r})_{\m}}{d_{\m}}K_{\m}(z,e), \;\;\text{and}\;\; \|\phi_{\m}\|_{\mathcal{F}}^2= \frac{(\frac{d}{r})_{\m}}{d_{\m}},$$ where $d_{\m}$ is the dimension of the space $\mathcal{P}_{\m}$ (see \cite[Lemma 4.3]{AZ2003}).
Thus, by \eqref{inner-product relation}, we get $$\|\phi_{\m}\|_{\alpha}^2=\frac{1}{\alpha_{\m}}\frac{(d/r)_{\m}}{d_{\m}}.$$

For any function $f$ on $\overline{\Omega},$ its $\mathbb K$-invariantisation, denoted by $f^{\mathbb K},$ is given by $$f^{\mathbb K}(z)= \int_{\mathbb K} f( k\cdot z)dk,$$ where $dk$ is the Haar measure on the compact group $\mathbb K.$ The following result can be found in \cite[Lemma 3.3]{FK1990} and \cite[Proposition 2.9]{BM1996}.
\begin{lemma} \label{invariantization}
 For any signature $\m$ and $z=\sum_{i=1}^r t_ie_i$, we have $$\int_{\mathbb K} |\phi_{\m}(k\cdot z)|^2 dk=\frac{1}{d_{\m}} \phi_{\m}\left(\sum_{i=1}^r t_i^2 e_i\right).$$ 
\end{lemma}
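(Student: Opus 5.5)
The plan is to expand $|\phi_{\m}|^2$ in the Fischer--Fock reproducing kernel of $\mathcal P_{\m}$ and then use Schur orthogonality over $\mathbb K$. Fix an orthonormal basis $\{\psi_\beta^{\m}\}_{\beta=1}^{d_{\m}}$ of $\mathcal P_{\m}$. Since $\mathcal P_{\m}$ is $\mathbb K$-invariant and the Fischer--Fock inner product is $\mathbb K$-invariant, the action $k\mapsto (p\mapsto p\circ k)$ is an irreducible unitary representation $\rho_{\m}$ of $\mathbb K$ on $\mathcal P_{\m}$. Writing $\phi_{\m}(k\cdot z)=(\rho_{\m}(k)\phi_{\m})(z)=\sum_\beta \langle \rho_{\m}(k)\phi_{\m},\psi^{\m}_\beta\rangle_{\mathcal F}\,\psi^{\m}_\beta(z)$, we get
\[
\int_{\mathbb K}|\phi_{\m}(k\cdot z)|^2\,dk=\sum_{\beta,\gamma}\psi^{\m}_\beta(z)\overline{\psi^{\m}_\gamma(z)}\int_{\mathbb K}\langle \rho_{\m}(k)\phi_{\m},\psi^{\m}_\beta\rangle\overline{\langle \rho_{\m}(k)\phi_{\m},\psi^{\m}_\gamma\rangle}\,dk .
\]
By the Schur orthogonality relations for the irreducible representation $\rho_{\m}$, the inner integral equals $\frac{1}{d_{\m}}\|\phi_{\m}\|_{\mathcal F}^2\,\delta_{\beta\gamma}$. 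Hence
\[
\int_{\mathbb K}|\phi_{\m}(k\cdot z)|^2\,dk=\frac{\|\phi_{\m}\|^2_{\mathcal F}}{d_{\m}}K_{\m}(z,z)=\frac{(d/r)_{\m}}{d_{\m}^2}K_{\m}(z,z).
\]
Here we used the stated value $\|\phi_{\m}\|_{\mathcal F}^2=(d/r)_{\m}/d_{\m}$.

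Next we use $\phi_{\m}(w)=\frac{(d/r)_{\m}}{d_{\m}}K_{\m}(w,e)$. It then suffices to show $K_{\m}(z,z)=K_{\m}(\sum_i t_i^2e_i,e)$ for $z=\sum_i t_ie_i$. This is the standard fact from \cite[Lemma 3.2]{FK1990}, already used earlier in the paper for $\Delta^{(\ell)}$. Indeed $K_{\m}(z,w)$ depends only on the Bergman-type operator / quadratic representation, and $K_{\m}(z,w)=K_{\m}(Q(w)z... )$; concretely, $K_{\m}(z,w)$ is a polynomial in $z,\bar w$ that depends only on the pair through $D(z,w)$. If we want to argue directly instead, we can restrict to the flat $\mathrm{span}\{e_i\}$ and use $\mathbb K$-invariance, $K_{\m}(kz,kw)=K_{\m}(z,w)$. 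The torus $\{\sum e^{i\theta_j}e_j\}$ action lies in $\mathbb K$ (each Peirce projection), so $K_{\m}(\sum t_ie_i,\sum s_ie_i)$ is a polynomial in the products $t_is_i$ only. Comparing with $K_{\m}(\sum t_is_ie_i,\,e)$, which is also a polynomial in $t_is_i$ and agrees with the former when $s=(1,\dots,1)$, gives $K_{\m}(\sum t_ie_i,\sum s_ie_i)=K_{\m}(\sum t_is_ie_i,e)$. Setting $s=t$ then yields the claim.

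Combining the two steps,
\[
\int_{\mathbb K}|\phi_{\m}(k\cdot z)|^2\,dk=\frac{(d/r)_{\m}}{d_{\m}^2}K_{\m}\Big(\sum t_i^2e_i,e\Big)=\frac{1}{d_{\m}}\phi_{\m}\Big(\sum t_i^2e_i\Big).
\]
The main obstacle is the identity $K_{\m}(z,w)=K_{\m}(\sum t_is_ie_i,e)$. The torus argument above only shows that the polynomial depends on $(t_is_i)$ \emph{separately in each coordinate-weight} sense, so one must check that the bi-degree structure forces dependence on products only. Torus invariance gives that each monomial $t^\alpha s^\beta$ has $\alpha=\beta$, which is exactly what is needed. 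If that check fails, I would simply cite \cite[Lemma 3.2]{FK1990}.
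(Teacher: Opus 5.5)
Your argument is correct and is essentially the standard proof behind the references the paper cites for this lemma; the paper gives no proof of its own and defers to \cite[Lemma 3.3]{FK1990} and \cite[Proposition 2.9]{BM1996}. There, as in your proposal, Schur orthogonality on the irreducible space $\mathcal P_{\m}$ gives $\int_{\mathbb K}|\phi_{\m}(k\cdot z)|^2\,dk=\frac{\|\phi_{\m}\|_{\mathcal F}^2}{d_{\m}}K_{\m}(z,z)$ (that $p\mapsto p\circ k$ is an anti-homomorphism is harmless by inversion invariance of Haar measure), and the frame reduction $K_{\m}(z,z)=K_{\m}(\sum_i t_i^2e_i,e)$ is \cite[Lemma 3.2]{FK1990}, which the paper already uses. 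Your torus argument proves that reduction completely: writing $K_{\m}(\sum_i t_ie_i,\sum_i s_ie_i)=\sum c_{\alpha\beta}t^\alpha\bar s^\beta$, invariance under the diagonal torus in $\mathbb K$ kills every term with $\alpha\neq\beta$, so this equals $g(t_1\bar s_1,\ldots,t_r\bar s_r)=K_{\m}(\sum_i t_i\bar s_ie_i,e)$. The hedging about an ``obstacle'' and the unfinished remark about $Q(w)$ and $D(z,w)$ can therefore be deleted.
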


For each $q \geq 0$ and signature $\m=(m_1,\ldots,m_r),$ let $S_{\m}^q$ be the Selberg-Jack symmetric polynomial \cite{KD1997}. Note that the elementary spherical function $\phi_{\m}$ can be expressed as follows: \beq \label{conversion to sb} \phi_{\m}\left(\sum_{i=1}^r x_ie_i\right)&=&\frac{S_{\m}^{a/2}(x_1,\ldots, x_r)}{S_{\m}^{a/2} (1,\ldots,1)}\notag\\
 &=&\frac{f_r^{a/2}(\underline 0)}{f_r^{a/2}(\m)}S_{\m}^{a/2}(x_1,\ldots,x_r),\eeq where $f_r^q(\m)=\prod_{1 \leq i <j \leq r} (m_i-m_j+q(j-i))_{q}$ (see \cite[equation 3.9]{Up2021}). 

 We now establish a criterion for the subnormality of $\boldsymbol M_{z}^{(\alpha)}$ defined on $\mathcal{H}^\alpha(\Omega).$
 \begin{theorem}\label{Main theorem 1}
     The following statements are equivalent:
     \begin{itemize}
         \item [(i)] The multiplication operator tuple $\boldsymbol{M}_z^{(\alpha)}$ on $\mathcal{H}^\alpha(\Omega)$ is a subnormal contractive $d$-tuple.
         \item [(ii)] There is a $\mathbb K$-invariant probability measure $\mu$ supported on $\overline{\Omega}$ such that for all signature $\m$, 
         \beq \label{eq 2} \int_{\overline{\Omega}} |\phi_{\m}|^2 d\mu=\frac{1}{\alpha_{\m}}\frac{(d/r)_{\m}}{d_{\m}}. \eeq 
         \item [(iii)] There is a probability measure $\nu$ supported on $\Delta_r$ such that 
             \beq \label{eq 3} \int_{\Delta_r} S_{\m}^{a/2}(x) d\nu(x)= \frac{f_r^{a/2}(\m)}{f_r^{a/2}(\underline 0)}\frac{(\frac{d}{r})_{\m}}{\alpha_{\m}}.\eeq
     \end{itemize}
 \end{theorem}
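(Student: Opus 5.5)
We will prove (i)$\Rightarrow$(ii)$\Rightarrow$(iii)$\Rightarrow$(ii)$\Rightarrow$(i), following the rank-one argument given just before the statement.

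\emph{(i)$\Rightarrow$(ii).} Let $\boldsymbol N$ be the minimal normal extension of $\boldsymbol M_z^{(\alpha)}$, with joint spectral measure $E$. Put $\mu(\cdot)=\langle E(\cdot)1,1\rangle_\alpha$. Since $\|1\|_\alpha=1$, this $\mu$ is a probability measure, and $\langle p,q\rangle_\alpha=\int p\bar q\,d\mu$ for all polynomials $p,q$. The measure is supported on $\sigma(\boldsymbol N)\subseteq\sigma(\boldsymbol M_z^{(\alpha)})$. By Proposition \ref{spectrumcontraction} (this is where the contractive hypothesis is used), the support lies in $\overline{\Omega}$.

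For $\mathbb K$-invariance, the inner product on $\mathcal H^\alpha(\Omega)$ is $\mathbb K$-invariant, so $\int |p\circ k|^2d\mu=\int|p|^2d\mu$ for all polynomials $p$. By polarization, $\int p\bar q\,d\mu$ is unchanged under $k$. Polynomials in $z,\bar z$ are dense in $C(\overline{\Omega})$ by Stone--Weierstrass, so $\mu\circ k^{-1}=\mu$. Equation \eqref{eq 2} then follows from $\|\phi_{\m}\|_\alpha^2=\frac{1}{\alpha_{\m}}\frac{(d/r)_{\m}}{d_{\m}}$.

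\emph{(ii)$\Leftrightarrow$(iii).} $\mathbb K$-invariant probability measures $\mu$ on $\overline\Omega$ correspond bijectively to probability measures $\nu'=\mu\circ\pi^{-1}$ on $\Delta_r$. One direction is the push-forward. For the other, set $\mu(A)=\int_{\Delta_r}\int_{\mathbb K}\chi_A(k\cdot\sum t_ie_i)\,dk\,d\nu'(t)$.

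By Lemma \ref{invariantization} and \eqref{conversion to sb},
\[\int_{\overline\Omega}|\phi_{\m}|^2d\mu=\frac{1}{d_{\m}}\frac{f_r^{a/2}(\underline 0)}{f_r^{a/2}(\m)}\int_{\Delta_r}S^{a/2}_{\m}(t_1^2,\dots,t_r^2)\,d\nu'(t).\]
Let $\nu$ be the push-forward of $\nu'$ under $t\mapsto t^2$, which maps $\Delta_r$ onto itself bijectively. Then \eqref{eq 2} becomes exactly \eqref{eq 3} after multiplying by $d_{\m}f_r^{a/2}(\m)/f_r^{a/2}(\underline0)$. The argument reverses using the square root.

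\emph{(ii)$\Rightarrow$(i).} Consider the map $\mathcal P(Z)\to L^2(\mu)$. We claim it is isometric for $\langle\cdot,\cdot\rangle_\alpha$. Both inner products are $\mathbb K$-invariant. Each $\mathcal P_{\m}$ is irreducible and the $\mathcal P_{\m}$ are mutually inequivalent, so by Schur's lemma the $\mu$-inner product is block diagonal with respect to $\oplus\mathcal P_{\m}$ and is a scalar multiple $c_{\m}$ of the Fischer--Fock product on each block.

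Mutual orthogonality of distinct blocks in $L^2(\mu)$ follows because the $\mathbb K$-equivariant pairing $\mathcal P_{\m}\times\mathcal P_{\s}\to\mathbb C$ is an intertwiner between inequivalent irreducibles, hence zero. Condition \eqref{eq 2} fixes $c_{\m}=1/\alpha_{\m}$, matching \eqref{inner-product relation}. So $\mathcal H^\alpha(\Omega)$ is the closure of the polynomials in $L^2(\mu)$, and $\boldsymbol M_z^{(\alpha)}$ is the restriction of the normal tuple of multiplications by $z_i$ on $L^2(\mu)$. Hence it is subnormal.

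It remains to check the contractive property. For $h=\sum_j p_j$ a polynomial, the $k$-th expression in the definition of a Cartan contraction evaluates to
\[\int\sum_{\ell}(-1)^\ell\binom{r-\ell}{k-\ell}\Delta^{(\ell)}(z,z)|h|^2\,d\mu.\]
This is justified because $\Delta^{(\ell)}(z,w)(\boldsymbol T,\boldsymbol T^*)=\sum q_i(\boldsymbol T)^*p_i(\boldsymbol T)$ and multiplication operators act on $L^2(\mu)$. The integrand is $\ge0$ on $\overline\Omega$ by Lemma \ref{domain description}. Density of polynomials then gives the operator inequality.

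The main obstacle is the Schur's lemma step. It requires care that the $\mu$-sesquilinear form on $\mathcal P(Z)$ is genuinely $\mathbb K$-invariant and that cross terms between different $\mathcal P_{\m}$ vanish. This uses multiplicity-freeness of the Peter--Weyl decomposition. A secondary point is that Lemma \ref{invariantization} is stated for diagonal points $z=\sum t_ie_i$; the disintegration formula lets us reduce to those points.
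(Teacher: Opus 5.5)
Your proposal is correct and follows essentially the same route as the paper. You construct $\mu$ from the normal extension and use Stone--Weierstrass for (i)$\Rightarrow$(ii), push forward under $\pi$ with Lemma \ref{invariantization} and the substitution $t\mapsto t^2$ for (ii)$\Leftrightarrow$(iii), and use Schur's lemma plus Lemma \ref{domain description} for (ii)$\Rightarrow$(i). The one point to flag is that the inclusion $\sigma(\boldsymbol N)\subseteq\sigma(\boldsymbol M_z^{(\alpha)})$ for the minimal normal extension is a nontrivial result, Putinar's spectral inclusion theorem for subnormal tuples, and should be cited; the paper packages this step into \cite[Theorem 5.1]{BM1996}. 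Your explicit Schur argument that distinct $\mathcal P_{\m}$ are orthogonal in $L^2(\mu)$ fills in a detail the paper leaves implicit.
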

 \begin{proof} We first prove the equivalence of $(i)$ and $(ii).$ Assume that the multiplication operator tuple $\boldsymbol M_{z}^{(\alpha)}$ is subnormal contractive $d$-tuple. Then by Proposition \ref{spectrumcontraction}, the Taylor joint spectrum $\sigma(\boldsymbol M_z^{(\alpha)})$ of $\boldsymbol M_z^{(\alpha)}$ is $\overline{\Omega}.$
 By \cite[Theorem 5.1]{BM1996}, there is a unique probability measure $\mu$ supported on $\overline{\Omega}$ such that the inner product on $\mathcal{H}^\alpha(\Omega)$ is given by $$\inp{f}{g}_{\alpha}=\int_{\overline{\Omega}} f\bar{g}d\mu, \qquad  f,g\in \mathcal{H}^\alpha(\Omega).$$ 
  Since the inner product on $\mathcal{H}^\alpha(\Omega)$ is $\mathbb K$-invariant, by the Stone-Weierstrass theorem, the measure $\mu$ is $\mathbb K$-invariant. 
 Furthermore, for any signature $\m$, $$\int_{\overline{\Omega}} |\phi_{\m}|^2 d\mu=\|\phi_{\m}\|^2_{\alpha}=\frac{1}{\alpha_{\m}}\frac{\left(\frac{d}{r}\right)_{\m}}{d_{\m}}.$$
 
 Conversely, assume that there is a $\mathbb K$-invariant probability measure $\mu$ supported on $\overline{\Omega}$ which satisfies \eqref{eq 2}. We equipped $\mathcal P_{\m}$ with the inner product $\inp{p}{q}_{\mu}=\int_{\overline{\Omega}} p \bar{q} d\mu$ for any $p, q \in \mathcal{P}_{\m}.$ Since $\mu$ is $\mathbb K$-invariant, the inner product $\inp{\cdot}{\cdot}_{\mu}$ is $\mathbb K$-invariant as well.
 By Schur Lemma, the $\mathbb K$-invariant inner product on each irreducible subspace $\mathcal{P}_{\m}$ is unique up to a scalar multiple, that is, there is a constant $c_{\m}$ such that $$\frac{1}{\alpha_{\m}}\frac{(d/r)}{d_{\m}}=\inp{\phi_{\m}}{\phi_{\m}}_{\mu}=c_{\m}\inp{\phi_{\m}}{\phi_{\m}}_{\alpha}=c_{\m}\frac{1}{\alpha_{\m}}\frac{(d/r)}{d_{\m}},$$ this yields $c_{\m}=1$ for all signature $\m.$
 Since, $\mathcal{H}^\alpha(\Omega)=\bigoplus_{\m} \mathcal{P}_{\m},$ by \cite[Theorem 5.1]{BM1996}, the multiplication tuple $\boldsymbol M_z^{(\alpha)}$ is subnormal.  
 For any $1\leq \ell \leq r,$ and $p\in \mathcal{P}_{\m},$
\beqn \left\langle \Delta^{(\ell)}(z,w)(\boldsymbol{M}_z^{(\alpha)},\boldsymbol{M}_z^{(\alpha)*})p, p \right\rangle_{\alpha} &=&\prod_{j=1}^{\ell} \left(1+\frac{a}{2}(j-1)\right)\sum_{\beta} \left\langle M_{\psi_{\beta}^{(\ell)}}^{\alpha*} M_{\psi_{\beta}^{(\ell)}}^{\alpha} p, p \right\rangle_{\alpha}\\
%&=& \prod_{j=1}^{\ell} \left(1+\frac{a}{2}(j-1)\right)\sum_{\beta} \left\langle \psi_{\beta}^{(\ell)} p, \psi_{\beta}^{\ell}p \right\rangle_{\alpha}\\
&=& \prod_{j=1}^{\ell} \left(1+\frac{a}{2}(j-1)\right)\sum_{\beta} \int_{\overline{ \Omega}} |\psi_{\beta}^{(\ell)}(z)|^2 |p(z)|^2 d\mu(z)\\
&\overset{\eqref{delk}}{=}&  \int_{\overline{\Omega}} \Delta^{(\ell)}(z,z) |p(z)|^2 d\mu(z). \eeqn
Consequently, by Lemma \ref{domain description}, for any $1 \leq k \leq r$, signature $\m$ and $p\in \mathcal{P}_{\m},$ 
\begin{align*}
    & \sum_{\ell=0}^k (-1)^\ell \binom{r-\ell}{k-\ell}\left\langle \Delta^{(\ell)}(z,w)(\boldsymbol{M}_z^{(\alpha)},\boldsymbol{M}_z^{(\alpha)*})p, p \right\rangle_{\alpha}\\
    &=\sum_{\ell=0}^k (-1)^\ell \binom{r-\ell}{k-\ell}\int_{\overline{\Omega}} \Delta^{(\ell)}(z,z) |p(z)|^2 d\mu(z) \geq 0
\end{align*}
 This proves that $\boldsymbol M_z^{(\alpha)}$ is a subnormal contractive $d$-tuple.

 To prove the equivalence of $(ii)$ and $(iii)$, we first observe that there is a one-to-one correspondence between the class of $\mathbb K$-invariant probability measures $\mu$ supported on $\overline{\Omega}$ and the class of probability measures $\nu$ supported on $\Delta_r = \{(x_1,\ldots,x_r): 0 \leq x_r \leq \cdots \leq x_1 \leq 1\}.$
 Let $\mu$ be a $\mathbb K$-invariant probability measure supported on $\overline{\Omega}.$ Recall that the set $\Delta_r$ is identified as the orbit space $\overline{\Omega}/\mathbb K$ via the quotient map $\pi: \overline{\Omega}\to \Delta_r\approx \overline{\Omega}/{\mathbb K},$ which maps every element $z\in \overline{\Omega}$ to its (unique) singular values $(x_1,x_2,\ldots,x_r)$ arranged in decreasing order.
 Then the measure $\nu$ is nothing but the push forward measure given by $\nu = \mu \circ \pi^{-1}$. If $\nu$ is a probability measure on $\Delta_r$, then the measure $\mu$ can be defined as \beq \label{eq 14}\mu(A):=\int_{\Delta_r}\left(\int_{\mathbb K} \chi_{k^{-1}\cdot A}(x_1e_1+\cdots+x_re_r)dk\right)d\nu(x)\eeq  for every Borel subset $A \subseteq \overline{\Omega}.$  
Furthermore, for any $f\in L^1(\mu)$, \beq \label{eq 13} \int_{\overline{\Omega}} f d\mu=\int_{\Delta_r}\left(\int_{\mathbb K} f (k\cdot\sum_{i=1}^r x_ie_i)dk\right)d\nu(x_1,\ldots,x_r).\eeq 
 Assume $(ii)$ holds true and $\nu$ be the corresponding measure supported on $\Delta_r.$ Then,
 \beqn \frac{1}{\alpha_{\m}} \frac{(\frac{d}{r})_{\m}}{d_{\m}}&\overset{\eqref{eq 2}}{=}&\int_{\overline{\Omega}} |\phi_{\m}|^2d\mu \overset{\eqref{eq 13}}{=} \int_{\Delta_r} \left(\int_{\mathbb K} |\phi (k\cdot\sum_{i=1}^r x_ie_i)|^2dk\right)d\nu(x_1,\ldots,x_r) \\
 &=& \frac{1}{d_{\m}}\int_{\Delta_r} \phi_{\m}(x_1^2 e_1+\cdots+x_r^2 e_r)d\nu(x).\eeqn Here, the last equality follows from Lemma \ref{invariantization}.
 
 Consequently, \eqref{conversion to sb} gives 
    \beqn \int_{\Delta_r} S_{\m}^{a/2}(x_1^2,\ldots,x_r^2) d\nu(x)= \frac{f_r^{a/2}(\m)}{f_r^{a/2}(\underline 0)}\frac{(\frac{d}{r})_{\m}}{\alpha_{\m}}.\eeqn
Now, $(iii)$ follows from a simple change of variable argument.

Conversely, if $\nu$ is the measure given in $(iii)$, then the measure $\mu$ defined by \eqref{eq 14} is a $\mathbb K$-invariant probability measure supported on $\overline{\Omega}.$ By retracing the preceding computation in the reverse direction, \eqref{eq 2} follows immediately. This completes the proof.
 \end{proof}

\begin{remark}
    If $\mu$ is the $\mathbb K$-invariant probability measure on $\overline{\Omega}$ and $\nu$ is the corresponding probability measure on $\Delta_r$ then $\mu$ is supported on $\Omega$ or $\overline{\partial\Omega_j}$ for $j=1,\ldots,r$ if and only if $\nu$ is supported on $\Delta_r^0=\{(x_1,\ldots,x_r): 0 \leq x_r \leq \cdots \leq x_1 <1\}$ or $\Delta_r^j=\{(x_1,\ldots,x_r):0 \leq x_r \leq \cdots \leq x_j=1=x_{j-1}=\ldots=x_1\}$ for $j=1,\ldots,r,$ respectively.
\end{remark}
The part $(iii)$ of Theorem \eqref{Main theorem 1} suggest to consider the following problem.
  \begin{question} 
      Let $q >0$ and $\{b_{\m}\}$ be a sequence of positive numbers indexed over all signatures, with $b_{\underline 0}=1.$ Does there exists a probability measure $\nu$ supported on $\Delta_r$ such that for each signature $\m$, $$\int_{\Delta_r} S_{\m}^q(x) d\nu(x)=b_{\m}\;\; ?$$
  \end{question}
If such a measure exists, then it is necessarily unique. Indeed, suppose that $\nu$ and $\nu'$ are two such measures on $\Delta_r$, then $\int_{\Delta_r} R_{\m}(x) d\nu(x)=\int_{\Delta_r} R_{\m}(x) d\nu'(x)$ for all signature $\m$, where $R_{\m}(x)$ is the symmetric monomials (see \eqref{symmet-selberg}). Since the symmetric monomials separate points on $\Delta_r$, the Stone-Weierstrass theorem implies that $\nu=\nu'.$ The sequence $\{b_{\m}\}_{\m\in \Vec{\mathbb N^r}}$ with this property is called as the {\it twisted moment sequence} with parameter $q$. The measure $\nu$ will be referred to as the representing measure of $\{b_{\m}\}.$
In the remainder of this section, we establish a relationship between twisted moment sequence and a certain Hausdorff moment sequence.

 \subsection{A twisted moment problem}  
   For any signature $\m=(m_1,\ldots,m_r)$, let $|\m|=m_1+\cdots+m_r$ denote its norm, and define the length $l(\m)$ to be the cardinality of the set $\{j: m_j>0\}.$ In our setting, $l(\m) \leq r.$
   
   We consider two types of partial orders on the set of all signatures with fixed norm. The first is the {\it reverse lexicographic ordering}, denoted by  $\leq^R,$ defined as follows: for any two signatures $\m_1=(m_1^1,\ldots,m_r^1)$ and $\m_2=(m_1^2,\ldots,m_r^2)$ with same norm, we say $\m_1 \leq^R \m_2$ if either $\m_1=\m_2$ or the first non-vanishing difference $m^2_i-m_i^1$ is positive for some $1\leq i \leq r.$ This is a complete ordering on the set of all signatures with fixed norm. For instance, if norm of the signatures is $5$ and length is less than equal to $3$, then the ordering is $(5,0,0),\; (4,1,0),\; (3,2,0),\; (3,1,1),\; (2,2,1).$
    The second is the {\it dominance partial ordering}, also known as the natural partial ordering, defined as follows: for any two signatures $\m_1$ and $\m_2$ with fixed norm, we say $\m_1 \leq \m_2$, if $m_1^1+\cdots+m_i^1 \leq m_1^2+\cdots+m_i^2$ for all $1 \leq i \leq r$. If $\m_1 \leq \m_2$ and $\m_1 \neq \m_2$ then we write $\m_1 <\m_2.$ Note that this ordering is not total in general (see \cite[p. 6]{MD1995}). Moreover, the dominance partial ordering is compatible with the reverse lexicographic ordering.  
  
  A matrix $(A_{\m,\s})$ indexed by all signatures of fixed norm in the reverse lexicographic ordering is said to be strictly upper triangular if $A_{\m,\s}=0$ unless $\s \leq \m$ and is said to be strictly upper unitriangular if in addition $A_{\m,\m}=1$ for all $\m.$ The set of all strictly upper unitriangular matrices forms a group (see \cite[Chapter I-6]{MD1995}). Here, we adopt the convention that the rows and columns in the matrix $(A_{\m,\s})$ are indexed in decreasing reverse lexicographic order, from top to bottom and from left to right, respectively.
  
  Let $x=(x_1,\ldots,x_r)$ be the tuple of $r$ independent variables $x_1,\ldots,x_r,$ then for each $\m=(m_1,\ldots,m_r),$ the symmetric monomials is given by $R_{\m}(x)=\sum_{\beta} x^\beta,$ where the summation is taken over all distinct permutation $\beta $ of $\m.$
  It follows from \cite[ Chapter VI-10.13]{MD1995} that the Selberg-Jack symmetric polynomials can be written as: $$S_{\m}^q (x_1,\ldots,x_r)=R_{\m}(x)+\sum_{\s < \m} K_{\m,\s}(q) R_{\s}(x).$$ 
  The numbers $K_{\m, \s}(q)$, coefficients of $R_{\s}$ in the above expression, are rational functions in $q,$ known as the Kostka polynomials. The matrix $K(q):=(K_{\m,\s}(q))$ indexed by all signatures of fixed norm in the reverse lexicographic ordering is strictly upper unitriangular. 
  Therefore, the inverse of $K(q)$ is also strictly upper unitriangular. The $(\m,\s)th$ entry in the inverse matrix is denoted by $K^{-1}_{\m, \s}(q).$
 Thus, for any signature $\m$, \beq \label{symmet-selberg}R_{\m}(x)=S_{\m}^q(x)+\sum_{\s < \m} K^{-1}_{\m, \s}(q)S_{\s}^q(x).\eeq
 
The case when $q=1$, the Selberg-Jack polynomial coincides with the Schur polynomial $S_{\m}^1(x)$, which in general is denoted by $S_{\m}$ (see \cite{KD1988}). The matrix $K(1)$ is called the Kostka matrix. A detailed computation of entries in the Kostka matrix and its inverse can be found in \cite{ER1990}, \cite{HD2003}.

 For $r=2,$ it follows from equations (1.7) and (1.4) of \cite{KD1997} that the Selberg–Jack symmetric function for the signature $\m=(m_1,m_2)$ is given by $$S_{\m}^q(x_1,x_2)=\sum_{\s \leq \m}K_{\m, \s}(q) R_{\s}(x_1,x_2),$$ where \beq \label{rank 2 Kostka}K_{\m, \s}(q)=\frac{(m_2-m_1)_{(m_1-s_1)} (q)_{(m_1-s_1)}}{(m_1-s_1)!\;(1-m_1+m_2-q)_{(m_1-s_1)}}, \qquad \s=(s_1, s_2).\eeq 
In addition, if $q=1$ then $K_{\m, \s}=1$ for all $\s \leq \m.$ Therefore, in this case, the inverse Kostka matrix consists of $1$’s on the diagonal, $-1$’s on the principal superdiagonal, and $0$ elsewhere.

 For any signature $\m$, let $\omega(\m)$ denote the order of the isotropy group $\{\sigma\in \mathrm{Sym}(r): m_{\sigma(k)}=m_k,\; 1 \leq k \leq r\}.$ Then $$R_{\m}(x)=\frac{1}{\omega(\m)}\sum_{\sigma \in \mathrm{Sym}(r)}\prod_{k=1}^r x_{\sigma(k)}^{m_k}.$$
 Let $\nu$ be the representing measure of a twisted moment sequence $\{b_{\m}\}$ with the parameter $q$, then
 \beq \label{eq 44} \sum_{\s \leq \m} K^{-1}_{\m, \s}(q) b_{\s} &=& \sum_{\s \leq \m} K^{-1}_{\m, \s}(q)\int_{\Delta_r} S_{\s}^q (x)d\nu(x)\notag\\
 &=& \int_{\Delta_r} R_{\m}(x)d\nu(x)\notag\\
 &=& \frac{1}{\omega(\m) }\int_{\Delta_r}\sum_{\sigma \in \mathrm{Sym}(r)}\prod_{k=1}^r x_{\sigma(k)}^{m_k} d\nu(x).\eeq
 
 From a given twisted moment sequence $\{b_{\m}\}$, we define a sequence $\{c_{m}(q)\}_{m\in \mathbb Z_+^r}$ (indexed over $r$-tuple of non-negative integers) as follows: for any $r$-tuple $m=(m_1,\ldots,m_r) \in \mathbb Z_+^r,$ \beq \label{relation between two sequence} c_{m}(q):=\omega(\m)\sum_{\s \leq \m} K^{-1}_{\m, \s}(q) b_{\s},\eeq where $\m$ is the signature obtained from $(m_1,\ldots,m_r)$ by rearranging its entries in decreasing order. 
 Note that $c_0(q)=r!$ and $\{c_m(q)\}_{m\in \mathbb Z_+^r}$ is invariant under the symmetric group, that is, $c_{(m_1,\ldots, m_r)}=c_{(m_{\sigma(1)}, \ldots, m_{\sigma(r)})}$ for all $\sigma\in \mathrm{Sym}(r).$

  \begin{theorem} \label{moment problem condition}
      A sequence $\{b_{\m}\}$ is a twisted moment sequence with the parameter $q$ if and only if the sequence $\{c_{m}(q)\}_{m \in \mathbb Z_+^r}$ given by \eqref{relation between two sequence} is a Hausdorff moment sequence.
  \end{theorem}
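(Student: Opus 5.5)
The plan is to pass between measures on the chamber $\Delta_r$ and symmetric measures on the cube $[0,1]^r$ by symmetrising and by sorting coordinates. The change of basis between Selberg--Jack polynomials and symmetric monomials is handled by the unitriangularity of $K(q)$ and of $K^{-1}(q)$. Throughout, ``Hausdorff moment sequence'' means that there is a positive Borel measure $\rho$ on $[0,1]^r$ with $c_m(q)=\int_{[0,1]^r}x^m\,d\rho(x)$ for all $m\in\mathbb Z_+^r$. The mass of $\rho$ is then $c_0(q)=r!$.

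\emph{Necessity.} Let $\nu$ be the representing measure of $\{b_{\m}\}$. For $\sigma\in\mathrm{Sym}(r)$ write $\sigma\cdot x=(x_{\sigma(1)},\ldots,x_{\sigma(r)})$. Define $\rho:=\sum_{\sigma\in\mathrm{Sym}(r)}\nu\circ(\sigma\cdot)^{-1}$, a positive measure on $[0,1]^r$ of mass $r!$. For $m\in\mathbb Z_+^r$ we have
\[
\int_{[0,1]^r} x^m\,d\rho=\sum_{\sigma}\int_{\Delta_r}\prod_{k=1}^r x_{\sigma(k)}^{m_k}\,d\nu(x).
\]
Since $\sigma$ runs over the whole group, the right-hand side is unchanged if $m$ is replaced by its decreasing rearrangement $\m$. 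By \eqref{eq 44} it then equals $\omega(\m)\int_{\Delta_r}R_{\m}\,d\nu=\omega(\m)\sum_{\s\leq\m}K^{-1}_{\m,\s}(q)b_{\s}=c_m(q)$. Hence $\{c_m(q)\}$ is a Hausdorff moment sequence.

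\emph{Sufficiency.} Let $\rho$ be a representing measure of $\{c_m(q)\}$ on $[0,1]^r$. First, $\rho$ may be taken permutation-invariant. The symmetrisation $\tilde\rho=\frac{1}{r!}\sum_\sigma\rho\circ(\sigma\cdot)^{-1}$ has the same moments because $c_m(q)$ is $\mathrm{Sym}(r)$-invariant, and Hausdorff measures are determinate on the compact set $[0,1]^r$. Let $\tau:[0,1]^r\to\Delta_r$ be the continuous (hence Borel) map sorting coordinates decreasingly, and set $\nu:=\frac{1}{r!}\,\rho\circ\tau^{-1}$. Its mass is $c_0(q)/r!=1$. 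Since $R_{\m}$ is symmetric, $R_{\m}\circ\tau=R_{\m}$, and therefore
\[
\int_{\Delta_r}R_{\m}\,d\nu=\frac{1}{r!}\int_{[0,1]^r}R_{\m}\,d\rho=\frac{1}{r!\,\omega(\m)}\sum_{\sigma}\int \prod_k x_{\sigma(k)}^{m_k}\,d\rho=\frac{1}{\omega(\m)}c_{\m}(q),
\]
using the symmetry of $\rho$ in the last step. Thus $\int_{\Delta_r}R_{\m}\,d\nu=\sum_{\s\leq\m}K^{-1}_{\m,\s}(q)b_{\s}$ for every $\m$. We now use $S_{\m}^q=\sum_{\s\leq\m}K_{\m,\s}(q)R_{\s}$ and the fact that $K(q)$ is the inverse of $K^{-1}(q)$ within each fixed-norm block. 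Integrating gives $\int_{\Delta_r}S^q_{\m}\,d\nu=\sum_{\s}K_{\m,\s}(q)\sum_{\lambda}K^{-1}_{\s,\lambda}(q)b_\lambda=b_{\m}$. So $\nu$ is a representing measure and $\{b_{\m}\}$ is a twisted moment sequence.

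\emph{Main obstacle.} The step needing most care is the bookkeeping relating $\omega(\m)$, the orbit sum over $\mathrm{Sym}(r)$, and the normalisation $r!$; the computation above shows the constants match, in particular $c_0(q)=r!$. A secondary point is showing that $\rho$ can be assumed symmetric, which rests on the determinacy of the Hausdorff moment problem on $[0,1]^r$. The matrix inversion causes no difficulty because all matrices involved are finite unitriangular blocks indexed by signatures of fixed norm.
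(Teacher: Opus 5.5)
Your proposal is correct and follows the paper's proof essentially step for step. For necessity, you symmetrize $\nu$ over $\mathrm{Sym}(r)$ to get a representing measure for $\{c_m(q)\}$ on $[0,1]^r$; for sufficiency, you push a representing measure forward under the sorting map, normalize by $1/r!$, and invert the unitriangular Kostka change of basis. You are more careful than the paper about why the representing measure may be taken $\mathrm{Sym}(r)$-invariant, which the paper only asserts, although that step is dispensable: each $\int\prod_k x_{\sigma(k)}^{m_k}\,d\rho$ is itself a moment $c_{m'}(q)$ with $m'$ a rearrangement of $m$, so it equals $c_{\m}(q)$ by the symmetry of the sequence alone.
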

\begin{proof}
    Assume that there exists a probability measure $\nu$ supported on $\Delta_r$ such 
    that for any signature $\m$, $$b_{\m}=\int_{\Delta_r} S_{\m}^q(x) d\nu(x).$$ By \eqref{eq 44}, $c_m(q)$ must satisfies  \beq \label{eq 15}c_m(q)=\int_{\Delta_r}\sum_{\sigma \in \mathrm{Sym}(r)}\prod_{k=1}^r x_{\sigma(k)}^{m_k} d\nu(x),\;\; m \in \mathbb Z_+^r.\eeq 
    %where $\m$ is the unique signature obtained from $m$ by rearranging its term in decreasing order.
    
    For each $\sigma\in \mathrm{Sym}(r)$, consider the bijective map $\pi_\sigma: [0,1]^r\to [0,1]^r$ given by $\pi_\sigma(x_1,\ldots,x_r)=(x_{\sigma(1)},\ldots,x_{\sigma(r)}).$ Then $\Delta_r^\sigma:=\pi_\sigma(\Delta_r)=\{(x_1,\ldots,x_r)\in [0,1]^r: 0 \leq x_{\sigma^{-1}(r)} \leq \cdots \leq x_{\sigma^{-1}(1)}\leq 1\}.$ Let $\nu^\sigma$ be the measure on $\pi_\sigma(\Delta_r),$ defined as $\nu^\sigma=\nu\circ\pi_\sigma^{-1}.$ 
    For any Borel subset $A$ of $[0,1]^r,$ consider the measure $\widetilde{\nu}$ as $$\widetilde{\nu}(A):=\sum_{\sigma\in \mathrm{Sym}(r)} \nu^\sigma\left(A \cap\Delta_r^\sigma\right).$$ Then the measure $\widetilde{\nu}$ is invariant under the symmetric group $\mathrm{Sym}(r).$
    Thus, for any $m=(m_1,\ldots,m_r)\in \mathbb Z_+^r$, 
    \beqn \int_{[0,1]^r} x_1^{m_1}\cdots x_r^{m_r} d\widetilde{\nu}(x) 
    %&=& \sum_{\sigma\in \mathrm{Sym}(r)} \int_{[0,1]^r} x_1^{m_1}\cdots x_r^{m_r} d\nu^\sigma(x)\\
    &=& \sum_{\sigma\in \mathrm{Sym}(r)} \int_{\Delta_r^\sigma} x_1^{m_1}\cdots x_r^{m_r} d\nu\circ \pi_\sigma^{-1}(x)\\
    &=& \sum_{\sigma\in \mathrm{Sym}(r)} \int_{\Delta_r} x_{\sigma(1)}^{m_1}\cdots x_{\sigma(r)}^{m_r}d\nu(x)
    \overset{\eqref{eq 15}}= c_m(q).\eeqn
    This shows that $\{c_m(q)\}_{m\in \mathbb Z_+^r}$ is a Hausdorff moment sequence with the $\mathrm{Sym}(r)$-invariant representing measure.

    Conversely, assume that $\{c_m(q)\}_{m \in \mathbb Z_+^r}$ obtained in \eqref{relation between two sequence} is a Hausdorff moment sequence with the representing measure $\widetilde{\nu}.$ Note that the measure $\widetilde{\nu}$ is invariant under the symmetric group $\mathrm{Sym}(r).$ 
    Let $x=(x_1,\ldots,x_r)\in [0,1]^r$, then there is a unique tuple $y=(y_1,\ldots,y_r)$ obtained by permuting the entries of $x$ such that $0 \leq y_r \leq \cdots \leq y_1\leq 1.$ Consider an onto map $P: [0,1]^r \to \Delta_r$ given by $P(x)=y.$
    For every Borel subset $A$ of $\Delta_r$, we define a probability measure $\nu$ as follows: $$\nu(A)=\frac{1}{r!} \widetilde{\nu}\circ P^{-1}(A).$$ 
    Since the measure $\widetilde{\nu}$ is $\mathrm{Sym}(r)$-invariant, it is easy to verify that 
    \beqn c_{m}(q)=  \int_{[0,1]^r} x_1^{m_1}\cdots x_r^{m_r}d\widetilde{\nu}(x) =  \sum_{\sigma\in \mathrm{Sym}(r)} \int_{\Delta_r} x_{\sigma(1)}^{m_1}\cdots x_{\sigma(r)}^{m_r} d\nu(x).\eeqn 
    Therefore, by \eqref{relation between two sequence}, for any signature $\m$,  $$\int_{\Delta_r}R_{\m}(x) d\nu(x)=\sum_{\s \leq \m} K^{-1}_{\m, \s}(q) b_{\s}.$$ 
    Thus, $$\int_{\Delta_r} S_{\m}^q(x) d\nu(x)=\sum_{\s \leq \m}K_{\m,\s}(q) \int_{\Delta_r}R_{\s}(x) d\nu(x)= b_{\m}.$$ This completes the proof.
   \end{proof} 

  A direct consequence of Theorem \ref{Main theorem 1}, along with Theorem \ref{moment problem condition}, provides a characterization of subnormality of the tuple $\boldsymbol M_z^{(\alpha)}$ in terms of $\{\alpha_{\m}\} .$
   \begin{corollary}\label{complete characterization}
   The following statements are equivalent.
   \begin{itemize}
    \item [(i)]  $\boldsymbol M_z^{(\alpha)}$ is subnormal and $\sigma(\boldsymbol M_z^{(\alpha)})\subseteq \overline{\Omega}$.
       \item [(ii)] $\boldsymbol{M}_z^{(\alpha)}$ is a subnormal contractive $d$-tuple.
       \item [(iii)] The sequence $\{b_{\m}\}$  given by $$b_{\m}=\frac{f_r^{a/2}(\m)}{f_r^{a/2}(\underline 0)}\frac{(\frac{d}{r})_{\m}}{\alpha_{\m}},$$ is a twisted moment sequence with parameter $\frac{a}{2}.$
       \item  [(iv)] The sequence $\{c_m\}_{m\in \mathbb Z_+^r}$ given by $$c_m=\omega(\m)\sum_{\s \leq \m} K^{-1}_{\m, \s}(a/2) \frac{f_r^{a/2}(\s)}{f_r^{a/2}(\underline 0)}\frac{(\frac{d}{r})_{\s}}{\alpha_{\s}}, $$
       is a Hausdorff moment sequence.
   \end{itemize}
   \end{corollary}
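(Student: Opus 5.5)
The plan is to establish the cycle of implications $(i)\Leftrightarrow(ii)$, $(ii)\Leftrightarrow(iii)$, $(iii)\Leftrightarrow(iv)$, using the results already proved.

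\emph{$(ii)\Rightarrow(i)$.} This follows from Proposition \ref{spectrumcontraction}: a contractive $d$-tuple has Taylor spectrum in $\overline{\Omega}$.

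\emph{$(i)\Rightarrow(ii)$.} This is the only step needing a new argument, and I expect it to be the main obstacle. Let $\boldsymbol N$ be the minimal normal extension of $\boldsymbol M_z^{(\alpha)}$. The cited result \cite[Theorem 5.1]{BM1996} only needs subnormality plus a spectral inclusion. Together with the standard inclusion $\sigma(\boldsymbol N)\subseteq\sigma(\boldsymbol M_z^{(\alpha)})\subseteq\overline{\Omega}$ (valid for minimal normal extensions of subnormal tuples), it gives a probability measure $\mu$ supported on $\overline{\Omega}$ with $\inp{f}{g}_\alpha=\int_{\overline{\Omega}}f\bar g\,d\mu$ for polynomials $f,g$. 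The scalar spectral measure here is $\langle E(\cdot)1,1\rangle$.

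We then verify the defining inequalities exactly as in the converse part of the proof of Theorem \ref{Main theorem 1}. For any polynomial $p$, the quantity $\langle \Delta^{(\ell)}(z,w)(\boldsymbol M_z^{(\alpha)},\boldsymbol M_z^{(\alpha)*})p,p\rangle_\alpha$ equals $\int\Delta^{(\ell)}(z,z)|p|^2d\mu$. Hence each combination $\sum_{\ell=0}^k(-1)^\ell\binom{r-\ell}{k-\ell}\Delta^{(\ell)}(z,w)(\boldsymbol M_z^{(\alpha)},\boldsymbol M_z^{(\alpha)*})$ has quadratic form $\int f_k(z)|p|^2d\mu\ge 0$, since $f_k\ge0$ on $\overline{\Omega}$ by Lemma \ref{domain description}.

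This must hold for all polynomials $p$, not only for $p\in\mathcal P_{\m}$. It does, because the identity with $\mu$ holds for arbitrary polynomials, so no block-diagonal argument is needed. Density of polynomials in $\mathcal H^\alpha(\Omega)$, together with boundedness of the operators, then gives positivity on the whole space. So $\boldsymbol M_z^{(\alpha)}$ is a subnormal contractive $d$-tuple.

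\emph{$(ii)\Leftrightarrow(iii)$.} This is exactly $(i)\Leftrightarrow(iii)$ of Theorem \ref{Main theorem 1}. The right-hand side of \eqref{eq 3} is precisely $b_{\m}$ with $q=a/2$. We also note that $b_{\underline 0}=1$, since $\alpha_{\underline 0}=1$, $(d/r)_{\underline 0}=1$, and $f_r^{a/2}(\underline 0)/f_r^{a/2}(\underline 0)=1$; so $\nu$ is a probability measure, consistent with the normalization in the definition of twisted moment sequence.

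\emph{$(iii)\Leftrightarrow(iv)$.} This is Theorem \ref{moment problem condition} with $q=a/2$ applied to this $\{b_{\m}\}$. By \eqref{relation between two sequence}, $c_m(a/2)$ is exactly the displayed $c_m$.
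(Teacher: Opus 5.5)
Your proposal is correct and follows essentially the paper's route. The corollary comes from chaining Proposition \ref{spectrumcontraction}, Theorem \ref{Main theorem 1} and Theorem \ref{moment problem condition} with $q=a/2$, and your argument for $(i)\Rightarrow(ii)$ is just the two halves of the proof of Theorem \ref{Main theorem 1} composed: the spectral inclusion yields $\mu$ on $\overline{\Omega}$, and then $\int f_k|p|^2\,d\mu\ge 0$. Checking positivity on all polynomials rather than on each $\mathcal P_{\m}$ is a harmless, slightly cleaner variant that avoids appealing to block diagonality.
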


\section{Applications}\label{S4}
Let $\boldsymbol M_z^{(\nu)}$ be the operator tuple of multiplications by the coordinate functions on $\mathcal{H}^\nu(\Omega).$ 
Suppose that $\boldsymbol M_z^{(\nu)}$ is subnormal. Since $\boldsymbol M_z^{(\nu)} $ is necessarily bounded, it follows that $\sigma(\boldsymbol{M}_z^{(\nu)})=\overline{\Omega}$ (see \cite[Theorem 4.1]{AZ2003}).
By the equivalence of conditions $(i)$ and $(ii)$ in Corollary \ref{complete characterization}, $\boldsymbol M_z^{(\nu)}$, in particular, is a contractive $d$-tuple. Therefore, by the necessary part of Theorem \ref{contractionpoint}, $$\nu\in \left\{\frac{d} {r},\; \frac{d}{r}+\frac{a}{2},\ldots, \frac{d}{r}+\frac{a}{2}(r-1)\right\}\bigcup\left(\frac{d}{r}+\frac{a}{2}(r-1),\infty\right).$$ 
The main objective of this section is to prove that $\boldsymbol M_z^{(\nu)}$ is subnormal precisely for the above values of $\nu.$ This result is not new, first proved in \cite[Theorem 1.3]{BM1996} for type-I domain and later in \cite[Theorem 6.1]{AZ2003} for any classical Cartan domain.
\begin{theorem}\label{subnormality alternative}
The tuple $\boldsymbol M_z^{(\nu)}$ is subnormal if and only if $\nu=\frac{d}{r}, \frac{d}{r}+\frac{a}{2}, \ldots, \frac{d}{r}+\frac{a}{2}(r-1)$ or $v >\frac{d}{r}+\frac{a}{2}(r-1).$
\end{theorem}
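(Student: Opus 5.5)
The necessity direction is already contained in the paragraph preceding the statement. If $\boldsymbol M_z^{(\nu)}$ is subnormal, then $\sigma(\boldsymbol M_z^{(\nu)})=\overline{\Omega}$ by \cite[Theorem 4.1]{AZ2003}. Corollary \ref{complete characterization} then makes it a contractive $d$-tuple, and Theorem \ref{contractionpoint} restricts $\nu$ to the stated set. So the work lies in sufficiency, which I will prove without appealing to \cite{AZ2003}. I will verify condition (iii) of Theorem \ref{Main theorem 1} with $\alpha_{\m}=(\nu)_{\m}$: I need a probability measure on $\Delta_r$ whose integral against $S^{a/2}_{\m}$ equals $\frac{f_r^{a/2}(\m)}{f_r^{a/2}(\underline 0)}\frac{(d/r)_{\m}}{(\nu)_{\m}}$. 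By \eqref{conversion to sb}, this is the same as
\[
\int \phi_{\m}(x)\,d\nu(x)=\frac{(d/r)_{\m}}{(\nu)_{\m}} .
\]

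\emph{Continuous range $\nu>\frac{d}{r}+\frac a2(r-1)$.} I take the Selberg-type density on $\Delta_r$,
\[
d\nu(x)=c_\nu\prod_{i}x_i^{\frac{d}{r}-1-\frac a2(r-1)}(1-x_i)^{\nu-\frac dr-\frac a2(r-1)-1}\prod_{i<j}|x_i-x_j|^{a}\,dx ,
\]
with $c_\nu$ chosen so that $\nu$ is a probability measure. This is the pushforward, in squared singular values, of the measure $\Delta(z,z)^{\nu-p}dm(z)$ on $\Omega$, suitably normalized. The exponents are $>-1$ exactly when $\nu>\frac{d}{r}+\frac a2(r-1)$. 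Kadell's generalized Selberg integral \cite[Theorem 1]{KD1997} evaluates $\int S^{a/2}_{\m}\,d\nu$ as a ratio of generalized Pochhammer products. After normalizing at $\m=\underline 0$, I expect this ratio to collapse to $\frac{(d/r)_{\m}}{(\nu)_{\m}}$ times $S^{a/2}_{\m}(1,\dots,1)=f_r^{a/2}(\m)/f_r^{a/2}(\underline 0)$. The check is bookkeeping with the Gamma factors: $\prod_j\Gamma(m_j+\lambda-\frac a2(j-1))$ produces the Pochhammer symbols $(\lambda)_{\m}$.

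\emph{Discrete values $\nu_j=\frac dr+\frac a2(j-1)$, $j=1,\dots,r$.} Here I take $\nu$ supported on the face $\Delta_r^{r-j+1}$, where $x_1=\dots=x_{r-j+1}=1$. The remaining $j-1$ coordinates carry a Selberg density of rank $j-1$ with parameters adapted to $\nu_j$. For $j=1$ this is the point mass at $(1,\dots,1)$, and then $\phi_{\m}(e)=1=(d/r)_{\m}/(d/r)_{\m}$, which is the Hardy space case. For general $j$ one needs $\int \phi_{\m}(1,\dots,1,y)\,d\nu'(y)$. Kadell's formula does not apply directly, because $S^{a/2}_{\m}$ is evaluated with $r-j+1$ variables frozen at $1$. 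My plan is to expand $S^{a/2}_{\m}(1,\dots,1,y)$ in rank-$(j-1)$ Jack polynomials $S^{a/2}_{\s}(y)$ using a binomial/branching formula, with generalized binomial coefficients $\binom{\m}{\s}$. I then integrate term by term with Kadell's rank-$(j-1)$ formula and resum. The resummation should be an identity of Chu–Vandermonde type, generalizing the one used in Theorem \ref{contractionpoint}, and its output should be $\frac{(d/r)_{\m}}{(\nu_j)_{\m}}$. This identity is presumably the content of the paper's Proposition \ref{important lemma}.

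\emph{Main obstacle.} The key step, and the hardest one, is this discrete-case summation identity. My first attempt would be a limiting argument. I view the face measure as a weak limit of the continuous-range measures as the exponent of $(1-x_i)$ tends to $-1$ for the top variables, which concentrates mass at $x_i=1$. I would then pass to the limit in the rational expression $\frac{(d/r)_{\m}}{(\nu)_{\m}}$. The difficulty is that the continuous measures only exist for $\nu$ above the threshold. So this limit has to be taken inside a lower-rank Selberg family with an auxiliary parameter, for which Kadell's formula still holds, rather than as $\nu$ decreases to $\nu_j$. If that fails, I would fall back on proving the generalized binomial summation directly, by induction on $|\m|$ using the Pieri-type recursion behind the eigenvalue formula for $\Delta^{(\ell)}(z,w)(\boldsymbol M_z^{(\nu)},\boldsymbol M_z^{(\nu)*})$.
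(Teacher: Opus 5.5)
Your necessity argument, your continuous-range density (its exponents are $b$ and $\nu-p_r$, as in \eqref{measure for cont}), and your measures for the discrete values all coincide with the paper's. For the discrete values, with $k=j-1$, these have $r-k$ coordinates frozen at $1$ and a rank-$k$ Selberg density $\prod_i t_i^b(1-t_i)^{\frac a2(r-k+1)-1}\prod_{i<j}|t_i-t_j|^a$, as in \eqref{measure for discrt}. The gap is that the discrete case is not actually proved. The right expansion is the branching rule $S^{a/2}_{\m}(t,1^{r-k})=\sum_{\s\subseteq\m}S^{a/2}_{\m/\s}(1^{r-k})\,S^{a/2}_{\s}(t)$ with skew Jack functions. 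The generalized binomial coefficients $\binom{\m}{\s}$ belong to a different expansion, that of $\phi_{\m}(e+x)$, where every variable is shifted. After integrating termwise with Kadell's formula, everything hinges on
\[
\sum_{\s\subseteq\m}S^{a/2}_{\m/\s}(1^{r-k})\,S^{a/2}_{\s}(1^{k})\,\frac{\bigl(b+1+\frac a2(k-1)\bigr)_{\s}}{\bigl(\frac dr+\frac a2 k\bigr)_{\s}}
=S^{a/2}_{\m}(1^{r})\,\frac{\bigl(\frac dr\bigr)_{\m}}{\bigl(\frac dr+\frac a2 k\bigr)_{\m}},
\]
and your proposal only conjectures this identity.

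Neither route you suggest produces it. The limiting argument reaches only $k=r-1$. As the exponent $\nu-p_r$ tends to $-1$, $\nu$ decreases to $\frac dr+\frac a2(r-1)$, so weak limits of the rank-$r$ measures represent only that value. Correspondingly, only one Gamma factor of the normalization, $\Gamma(\nu-p_r+1)$, degenerates, and a single coordinate escapes to $1$. To go deeper you would need a family supported on a face, with a free parameter and known moments. But frozen coordinates are exactly what makes Kadell's formula inapplicable, so computing those moments requires the same identity, now with a general parameter. The Pieri-recursion induction is too unspecified to assess.

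The paper supplies the missing idea in Proposition \ref{important lemma}, applied with $q=\frac a2$, $n=r-k$, $\ell=k$, $c=b+1+\frac a2(k-1)$, so that $c+qn=\frac dr$ and $c+q(n+\ell)=\frac dr+\frac a2 k$. Its proof uses generating functions in Jack functions with parameter $1/q$, in three steps. First, the Cauchy identity at $y=1^n$, combined with the structure constants of the $Q$'s, gives $\prod_i(1-x_i)^{-qn}Q_{\s}=\sum_{\m}P_{\m/\s}(1^n)Q_{\m}$. Second, the series $\sum_{\s}Q_{\s}(x)P_{\s}(1^\ell)\frac{(c)^q_{\s}}{(c+q(n+\ell))^q_{\s}}$ is recognized as Yan's ${}_2F_1(c,q\ell;c+q(n+\ell);x)$. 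Third, Euler's transformation $\prod_i(1-x_i)^{u+v-w}\,{}_2F_1(u,v;w;x)={}_2F_1(w-u,w-v;w;x)$ \cite{YZ1992} turns $\prod_i(1-x_i)^{-qn}\,{}_2F_1(c,q\ell;c+q(n+\ell);x)$ into ${}_2F_1(q(n+\ell),c+qn;c+q(n+\ell);x)$. The $Q_{\m}$-coefficient of the latter is $P_{\m}(1^{n+\ell})\frac{(c+qn)^q_{\m}}{(c+q(n+\ell))^q_{\m}}$, and comparing coefficients of $Q_{\m}$ yields the identity. What your proposal lacks is this Euler-type transformation for Jack hypergeometric series; a limiting argument does not replace it.
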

In view of Theorem \ref{Main theorem 1}(iii), we show that $\left\{\frac{f_r^{a/2}(\m)}{f_r^{a/2}(0)} \frac{\left(d/r\right)_{\m}}{(\nu)_{\m}}\right\}$ is a twisted moment sequence with the parameter $\frac{a}{2}$ for the values of $\nu$ specified in Theorem \ref{subnormality alternative}. The proof is divided into two parts: one for the continuous values and the other for the discrete values of $\nu$. For $\nu>\frac{d}{r}+\frac{a}{2}(r-1),$ the representing measure is given by pushforward of the measure $\Delta(z,z)^{\nu-p}dm(z)$ under the map $\pi: \overline{\Omega} \to \Delta_r,$ up to a suitable normalization constant, which also appears in the proof of \cite[Theorem 6.1]{AZ2003}.

For the discrete values of $\nu,$ our argument differs from that of \cite[Theorem 6.1]{AZ2003}.  In this case, we construct the representing measures by adapting a technique used in proving the equivalence of $(i)$ and $(ii)$ in Theorem 1.3 of \cite{BM1996}, where a key ingredient is a decomposition of Schur polynomials obtained in \cite[Proposition 2.4]{BM1996}. However, to the best of our knowledge, an analogous decomposition is not known for the Selberg–Jack symmetric functions, and it is unclear whether such a decomposition holds. To circumvent this difficulty, we establish an identity in Proposition \ref{important lemma} involving Selberg-Jack symmetric functions, which appears to be new and may be of independent interest. This identity plays a crucial role in the proof of Theorem \ref{subnormality alternative}.

\begin{proof}[Proof of Theorem \ref{subnormality alternative}(continuous part)]
If $\nu >a(r-1)+b+1=\frac{d}{r}+\frac{a}{2}(r-1)$ and $p_r=a(r-1)+b+2$, then
consider the measure
\beq \label{measure for cont} d\mu_r^\nu(t) = C_\nu(r)\prod_{i=1}^r t_i^b (1-t_i)^{\nu-p_r}\prod_{1 \leq i <j \leq r} |t_i-t_j|^{a}dt_1 \cdots dt_r\\
\text{where }\; C_\nu(r)=\frac{1}{f_r^{a/2}(0)}\prod_{i=1}^r \frac{\Gamma(\nu-\frac{a}{2}(i-1))}{\Gamma(\frac{a}{2}(r-i)+b+1)\Gamma(\nu-p_r+1+\frac{a}{2}(r-i))}.\notag\eeq 
Then, by a generalized Selberg's integral formula given in \cite[Theorem 1]{KD1997}, 
\beqn \int_{\Delta_r} S_{\m}^{a/2}(t) d\mu_r^{\nu}(t) &=& \frac{1}{r!} C_\nu(r) \int_{[0,1]^r} S_{\m}^{a/2}(t) \prod_{i=1}^r t_i^b (1-t_i)^{\nu-p_r}\prod_{1 \leq i <j \leq r} |t_i-t_j|^{a}dt_1 \cdots dt_r\\
&=& \frac{f_r^{a/2}(\m)}{f_r^{a/2}(0)} \prod_{i=1}^r  \frac{\Gamma(\nu-\frac{a}{2}(i-1))}{\Gamma(\frac{a}{2}(r-i)+b+1)\Gamma(\nu-p_r+1+\frac{a}{2}(r-i))}\\
&& \times \prod_{i=1}^r \frac{\Gamma(b+1+\frac{a}{2}(r-i)+m_i) \Gamma(\nu-p_r+1+\frac{a}{2}(r-i))}{\Gamma(b+1+\nu-p_r+1+\frac{a}{2}(2r-i-1)+m_i)}\\
&=& \frac{f_r^{a/2}(\m)}{f_r^{a/2}(0)} \prod_{i=1}^r \frac{\Gamma(b+1+\frac{a}{2}(r-i)+m_i) \Gamma(\nu-\frac{a}{2}(i-1))}{\Gamma(b+1+\frac{a}{2}(r-i))\Gamma(\nu-\frac{a}{2}(i-1)+m_i)}\\
&=& \frac{f_r^{a/2}(\m)}{f_r^{a/2}(0)} \frac{(b+1+\frac{a}{2}(r-1))_{\m}}{(\nu)_{\m}}=\frac{f_r^{a/2}(\m)}{f_r^{a/2}(0)} \frac{(d/r)_{\m}}{(\nu)_{\m}}.\eeqn
This shows that $\boldsymbol M_z^{(\nu)}$ is subnormal for $\nu> \frac{d}{r}+\frac{a}{2}(r-1).$ 
\end{proof}

For the discrete values of $\nu$, we first establish the following identity involving Selberg-Jack symmetric functions. The proof of this identity relies on known results involving Selberg-Jack symmetric functions in infinite variables, whereas we have primarily worked with symmetric functions in finitely many variables. To seamlessly utilize the existing literature on Selberg–Jack symmetric functions, it is convenient to reset a few notations. 
We consider signatures to be partitions $\m=(m_1,m_2,\ldots)$ consisting of finitely many positive integers arranged in weakly decreasing order. In \cite{MD1995}, the Selberg-Jack symmetric function is denoted by $P_{\m}^{(\alpha)}(x)$. In our setup $\alpha=\frac{1}{q},$ we find it convenient to use the notation $P_{\m}\left(x;\frac{1}{q}\right)$ for the Selberg-Jack symmetric function in the infinite variables $x=(x_1,x_2, \ldots)$ with parameter $q>0$.
For any positive integer $n,$ if $x_1=x_2=\cdots=x_n=1$ and $x_{n+1}=x_{n+2}=\cdots=0$, then we write $P_{\m}\left(x;\frac{1}{q}\right)$ as $P_{\m}\left(1^n;\frac{1}{q}\right).$ Note that if the number of nonzero entries $l(\m)$ in $\m$, satisfies $l(\m)>n,$ then $P_{\m}\left(1^n;\frac{1}{q}\right)=0.$ Moreover, when we work with finitely many variables $(x_1,\ldots, x_r)$, we regard this as a particular case of the infinite sequence $x=(x_1,x_2,\ldots)$ by setting $x_{r+1}=x_{r+2}=\cdots=0.$ 
With this understanding, $S_{\m}^q(x_1,\ldots,x_r)=P_{\m}\left(x;\frac{1}{q}\right)=0$ for any partition $\m$ with $l(\m)>r.$
 
For any partition $\m=(m_1,m_2,\ldots)$, we denote $\m'=(m_1',m_2',\ldots)$ as the conjugate of $\m,$ where $m_i'$ denotes the cardinality of the set $ \{j : m_j \geq i\}.$
 The diagram of a partition is defined as the set of points $(i,j)\in \mathbb Z_+^2$ such that $1\leq j \leq m_i.$ 
In drawing such diagrams, we shall adopt the convention, as the matrices, that the first coordinate $i$ (the row index) increases as one goes downwards, and the second coordinate $j$ (the column index) increases as one goes from left to right.
Note that the generalized Pochhammer symbol given above is a product of generalized rising factorial with step $\frac{a}{2}.$ One can replace $\frac{a}{2}$ by any $q>0$, and write the generalized Pochhammer symbol by the formula $$(c)_{\m}^q=\prod_{j=1}^r \left(c-q(j-1)\right)_{m_j}.$$ We drop the superscript $q$ whenever $q=\frac{a}{2}.$
\begin{proposition}\label{important lemma}
    For any positive integers $n$ and $\ell$, partition $\m$, and $c\in \mathbb C,$ $$\sum_{\s \subseteq \m} P_{\m/\s}\left(1^n;\frac{1}{q}\right)  P_{\s}\left(1^\ell;\frac{1}{q}\right) \frac{(c)^q_{\s}}{(c+q(n+\ell))^q_{\s}}=P_{\m}\left(1^{n+\ell};\frac{1}{q}\right)\frac{(c+qn)^q_{\m}}{(c+q(n+\ell))^q_{\m}},$$ where $\s\subseteq\m$ means that $\s$ is a sub-partition of $\m$, that is, $s_i \leq m_i,$ and $\m/\s$ is called corresponding skew partition. 
\end{proposition}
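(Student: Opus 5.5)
We will prove the identity by viewing both sides as polynomials in $c$ and using the binomial (Kaneko–Okounkov–Olshanski type) expansion of Jack polynomials evaluated at a shifted argument. The base input is the branching rule in infinitely many variables: $P_{\m}(x,y;\frac1q)=\sum_{\s\subseteq\m}P_{\m/\s}(x;\frac1q)P_{\s}(y;\frac1q)$. Specializing $x=1^n$, $y=1^\ell$ gives $\sum_{\s\subseteq\m}P_{\m/\s}(1^n)P_{\s}(1^\ell)=P_{\m}(1^{n+\ell})$. This is exactly the asserted identity in the limit $c\to\infty$, since both ratios of Pochhammer symbols tend to $1$. This suggests the following strategy: first clear denominators and regard everything as rational in $c$, then show that the weights $\frac{(c)_{\s}}{(c+q(n+\ell))_{\s}}$ deform the branching rule consistently.

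The concrete route is as follows. First, recall the explicit principal specialization
\[P_{\s}\left(1^\ell;\tfrac1q\right)=\frac{(q\ell)^q_{\s}}{h_*(\s)}\]
for an explicit hook product $h_*(\s)$ (Macdonald VI.10.20). Next, recall the known formula for skew Jack polynomials at $1^n$ in terms of generalized binomial coefficients. Namely, $P_{\m/\s}(1^n)$ equals $\frac{(qn)_{\m}/h_*(\m)}{(qn)_{\s}/h_*(\s)}$ times something, obtained from the Okounkov–Olshanski binomial formula
\[\frac{P_{\m}(1+x_1,\dots,1+x_N)}{P_{\m}(1^N)}=\sum_{\s}\binom{\m}{\s}_{q}\frac{P_{\s}(x)}{P_{\s}(1^N)}.\]
With these in hand, the identity reduces to a summation over $\s\subseteq\m$ of generalized binomial coefficients times a ratio of generalized Pochhammer symbols. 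This is a multivariable Chu–Vandermonde / Pfaff–Saalschütz identity, of the kind proven by Kaneko (\cite{KD1997} and its hypergeometric companion) via the ${}_2F_1$ Jack-hypergeometric function. Specifically, I would use Kaneko's formula
\[{}_2F_1^{(1/q)}(a,b;c;1^N)=\prod_i\frac{\Gamma(c-q(i-1))\Gamma(c-a-b-q(i-1))}{\Gamma(c-a-q(i-1))\Gamma(c-b-q(i-1))}\]
truncated by a negative-integer parameter, i.e. the Jack analogue of Chu–Vandermonde. In this formula the sum over $\s$ with $\binom{\m}{\s}$ weights, evaluated with $(c)_{\s}/(c+q(n+\ell))_{\s}$ and $(-qn)$-type factors, collapses to $(c+qn)_{\m}/(c+q(n+\ell))_{\m}$.

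An alternative, which I would try if the above bookkeeping is too heavy, is induction on $n$. For $n=1$, $P_{\m/\s}(1^1)$ is nonzero only for horizontal strips $\m/\s$, with an explicit Pieri coefficient $\psi_{\m/\s}$. The identity then becomes a single-step recurrence relating $(c+q(\ell+1))$ and $(c+q\ell)$, which can be verified by a Lagrange-interpolation/partial-fractions argument in $c$: both sides are rational in $c$ with the same poles and the same behavior at infinity, so it suffices to check residues. The general $n$ then follows from $n-1$ and $1$ by splitting $1^n=1^{n-1}\cup 1^1$ in the branching rule and applying the statement twice, first with $(n-1,\ell)$ replaced appropriately and $c$ shifted.

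The main obstacle is the precise identification of $P_{\m/\s}(1^n)$ (or the Pieri coefficients) in a form that makes the sum over $\s$ recognizable as a Jack Chu–Vandermonde sum; normalizations of $\binom{\m}{\s}_q$ and of $h_*$ versus $h^*$ are easy to get wrong. I would fix conventions by checking the case of a one-row partition $\m=(m)$. There the skew polynomials are one-variable binomials and the identity must reduce to the classical Chu–Vandermonde ${}_3F_2$/${}_2F_1$ evaluation, which provides a consistency check before handling general $\m$.
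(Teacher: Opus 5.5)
Your proposal has a genuine gap: neither route gives a mechanism that evaluates the sum for a fixed $\m$.

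\textbf{The first route.} Everything rests on a formula for $P_{\m/\s}(1^n;\frac1q)$ that you leave as ``times something''. Skew Jack polynomials at $1^n$ have no product formula in general. Already for $q=1$ one has $s_{(3,2,1)/(1)}(1^n)=\frac{n(n^2-1)(2n^2-3)}{15}$. The Okounkov--Olshanski binomial formula describes the expansion of $P_{\m}$ at the shifted point $1+x$, not the specializations $P_{\m/\s}(1^n;\frac1q)$.

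Moreover, the summation you need is not of Chu--Vandermonde type. For a one-row partition $\m=(m)$ one has $P_{(m)/(s)}(1^n;\frac1q)=\binom{m}{s}\frac{(q)_s(qn)_{m-s}}{(q)_m}$ and $P_{(s)}(1^\ell;\frac1q)=\frac{(q\ell)_s}{(q)_s}$, with ordinary rising factorials. The statement then becomes
\[
\sum_{s=0}^{m}\binom{m}{s}(qn)_{m-s}(q\ell)_s\frac{(c)_s}{(c+q(n+\ell))_s}=(q(n+\ell))_m\frac{(c+qn)_m}{(c+q(n+\ell))_m}.
\]
This is the Pfaff--Saalsch\"utz evaluation of the balanced series $(qn)_m\,{}_3F_2(-m,q\ell,c;1-qn-m,c+q(n+\ell);1)$. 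Truncating Kaneko's Gauss summation ${}_2F_1(a,b;c;1^N)$ only gives ${}_2F_1$-type (Chu--Vandermonde) evaluations. You give no way to produce the extra balanced parameter; the one-row check you propose would have exposed this.

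\textbf{The second route.} The induction on $n$ does not close. Write $P_{\m/\s}(1^n)=\sum_{\underline{\lambda}}P_{\m/\underline{\lambda}}(1^{n_1})P_{\underline{\lambda}/\s}(1^{n_2})$ and sum over $\s$ first. The inner sum carries the weight $\frac{(c)^q_{\s}}{(c+q(n+\ell))^q_{\s}}$. This has the form $\frac{(c')^q_{\s}}{(c'+q(n_2+\ell))^q_{\s}}$ required by the statement only if $c'=c$ and $n_2=n$. Likewise, an instance of the statement with total $n+\ell$ reproduces the right-hand factor $\frac{(c+qn)^q_{\m}}{(c+q(n+\ell))^q_{\m}}$ only for the same split $(n,\ell)$. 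So no shift of $c$ makes the two applications fit. You would need a version with independent numerator and denominator parameters, which is non-balanced and not summable in closed form. The residue check for the base case is also only asserted.

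\textbf{The missing idea.} Pass to generating functions in the $Q_{\m}(x;\frac1q)$. The Cauchy identity at $y=1^n$, together with $\langle P_{\m/\s},Q_{\underline{\lambda}}\rangle=\langle P_{\m},Q_{\s}Q_{\underline{\lambda}}\rangle$, gives $\prod_i(1-x_i)^{-qn}Q_{\s}(x;\frac1q)=\sum_{\m}P_{\m/\s}(1^n;\frac1q)Q_{\m}(x;\frac1q)$. Hence your left-hand side is the $Q_{\m}$-coefficient of $\prod_i(1-x_i)^{-qn}\,{}_2F_1(c,q\ell;c+q(n+\ell);x)$. The Jack Euler transformation $\prod_i(1-x_i)^{u+v-w}\,{}_2F_1(u,v;w;x)={}_2F_1(w-u,w-v;w;x)$, with $u+v-w=-qn$, turns this into ${}_2F_1(q(n+\ell),c+qn;c+q(n+\ell);x)$. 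Its $Q_{\m}$-coefficient is exactly $P_{\m}(1^{n+\ell};\frac1q)\frac{(c+qn)^q_{\m}}{(c+q(n+\ell))^q_{\m}}$.

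This is the multivariable version of deriving Pfaff--Saalsch\"utz from Euler's transformation, and it is how the paper proceeds. Your observation that $c\to\infty$ recovers the branching rule is correct, but it is only the degenerate endpoint of this identity.
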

\begin{proof} 
 Let $x=(x_1,x_2,\ldots)$ and $y=(y_1,y_2,\ldots)$ be two set of indeterminants. Then, by \cite[equation(SC), p. 64]{KD1997}, \beqn \sum_{\m} b_{\m}\left(\frac{1}{q}\right)P_{\m}\left(x; \frac{1}{q}\right)P_{\m}\left(y; \frac{1}{q}\right)=\prod_{i,j\geq 1}(1-x_i y_j)^{-q},\eeqn where $$b_{\m}\left(\frac{1}{q}\right)=\prod_{(i,j)\in \m}\frac{m_i-j+q(m'_j-i)+q}{m_i-j+q(m'_j-i)+1}.$$
    %Recall from \cite[Chapter VI-10.16]{MD1995}, 
    Set $Q_{\m}\left(x;\frac{1}{q}\right)=b_{\m}\left(\frac{1}{q}\right) P_{\m}\left(x; \frac{1}{q}\right),$ then the above equation becomes 
    \beq \label{eq 18}\prod_{i,j\geq 1} (1-x_i y_j)^{-q}=\sum_{\m} Q_{\m}\left(x; \frac{1}{q}\right)P_{\m}\left(y; \frac{1}{q}\right).\eeq
    Furthermore, equation (4.12) in \cite[Chapter VI]{MD1995} and its subsequent discussion yields (see also \cite[Chapter VI-10.16]{MD1995}) \beq \label{eq 26}\Inp{P_{\m}\left(x;\frac{1}{q}\right)}{Q_{\s}\left(x;\frac{1}{q}\right)}=\delta_{\m\; \s}, \;\; \text{ and }\;\;\Inp{P_{\m}\left(x;\frac{1}{q}\right)}{P_{\m}\left(x;\frac{1}{q}\right)}^{-1}=b_{\m}\left(\frac{1}{q}\right).\eeq Here, the inner product $\Inp{\cdot}{\cdot}$ is as given in \cite[Chapter VI-1.4]{MD1995} (see \cite[equation 3]{St1989}).

     For any two partition $\s$ and $\\lambda$, \beq \label{eq 21}Q_{\s}\left(x;\frac{1}{q}\right)Q_{\underline\lambda}\left(x; \frac{1}{q}\right)=\sum_{\m} f^{\m'}_{\s'\; \underline\lambda'}\left(\frac{1}{q}\right)Q_{\m}\left(x; \frac{1}{q}\right),\eeq where the right hand sum is over all such partition $\m$ for which $|\m|=|\s|+|\\lambda|$ and $\m', \s',\\lambda'$ denotes the corresponding conjugate partitions. 
  A description of coefficients $f^{\m'}_{\s'\; \\lambda'}\left(\frac{1}{q}\right)$ can be found \cite[Chapter VI-7.3]{MD1995}. Recall that $\{P_{\\lambda}\left(x; \frac{1}{q}\right)\}$ forms a basis of the space of all symmetric polynomials over the field $\mathbb Q\left(\frac{1}{q}\right).$
  Therefore, $P_{\m/\s}\left(x;\frac{1}{q}\right)$ is a linear combination of all such $P_{\\lambda}\left(x; \frac{1}{q}\right)$ for which $|\\lambda|=|\m|-|\s|$ and given by $P_{\m/\s}\left(x;\frac{1}{q}\right)=\sum_{\\lambda}c_{\\lambda,\m,\s} P_{\\lambda}\left(x; \frac{1}{q}\right).$ 
  Now, we observe that the coefficients $c_{\\lambda,\m,\s}$ are nothing but $f^{\m'}_{\s'\; \\lambda'}\left(\frac{1}{q}\right).$ Indeed, for any partition $\\lambda$ with $|\\lambda|=|\m|-|\s|$, 
  \beqn
      \Inp{ P_{\m/\s}\left(x;\frac{1}{q}\right)} {P_{\underline\lambda}\left(x; \frac{1}{q}\right)} &=& \frac{1}{b_{\underline\lambda}(1/q)} \Inp{ P_{\m/\s}\left(x;\frac{1}{q}\right)}{  Q_{\underline\lambda}\left(x; \frac{1}{q}\right)}\\
      &=& \frac{1}{b_{\underline\lambda}(1/q)} \Inp{ P_{\m}\left(x;\frac{1}{q}\right)}{  Q_{\s}\left(x; \frac{1}{q}\right) Q_{\underline\lambda}\left(x; \frac{1}{q}\right)}\\
      &\overset{\eqref{eq 21}}=& \frac{1}{b_{\underline\lambda}(1/q)} \Inp{ P_{\m}\left(x;\frac{1}{q}\right)}{\sum_{\m} f^{\m'}_{\s'\; \underline\lambda'}\left(\frac{1}{q}\right)Q_{\m}\left(x; \frac{1}{q}\right)}\\
      &\overset{\eqref{eq 26}}=&  \frac{1}{b_{\underline\lambda}(1/q)}f^{\m'}_{\s'\; \underline\lambda'}\left(\frac{1}{q}\right)\overset{\eqref{eq 26}}= f^{\m'}_{\s'\; \underline\lambda'}\left(\frac{1}{q}\right)\Inp{P_{\underline\lambda}\left(x; \frac{1}{q}\right)}{P_{\underline\lambda}\left(x; \frac{1}{q}\right)}.
  \eeqn
The second equality in the above calculation follows from \cite[Chapter VI-(7.6')]{MD1995}.
  Thus, \beq \label{eq 19}P_{\m/\s}\left(x;\frac{1}{q}\right)=\sum_{\underline\lambda} f^{\m'}_{\s'\; \underline\lambda'}\left(\frac{1}{q}\right)P_{\underline\lambda}\left(x; \frac{1}{q}\right),\eeq
where the sum runs over all partitions $\\lambda$ for which $|\\lambda|=|\m|-|\s|.$
   
  By putting $y=1^n$ in \eqref{eq 18} and then multiplying by $Q_{\s}\left(x; \frac{1}{q}\right)$ on both sides, we get 
  \beq \label{eq 22}\prod_{i}(1-x_i)^{-nq}Q_{\s}\left(x; \frac{1}{q}\right)&=& \sum_{\underline\lambda}Q_{\s}\left(x; \frac{1}{q}\right) Q_{\underline\lambda}\left(x; \frac{1}{q}\right)P_{\underline\lambda}\left(1^n; \frac{1}{q}\right) \notag\\
  &\overset{\eqref{eq 21}}=& \sum_{\underline\lambda} \sum_{\m}f^{\m'}_{\s'\; \underline\lambda'}\left(\frac{1}{q}\right) Q_{\m}\left(x; \frac{1}{q}\right)P_{\underline\lambda}\left(1^n; \frac{1}{q}\right)\notag\\
 &=&  \sum_{\m}\left(\sum_{\underline\lambda} f^{\m'}_{\s'\; \underline\lambda'}\left(\frac{1}{q}\right)P_{\underline\lambda}\left(1^n; \frac{1}{q}\right)\right)Q_{\m}\left(x; \frac{1}{q}\right) \notag\\
  &\overset{\eqref{eq 19}}{=}& \sum_{\m} P_{\m/\s}\left(1^n; \frac{1}{q}\right)Q_{\m}\left(x;\frac{1}{q}\right).\eeq
The last sum runs over all partitions $\m$ for which $\s \subseteq \m.$
For any two fixed positive integers $\ell$ and $n$, and a complex number $c$, define \beq \label{eq 20} G_\ell(x):=\sum_{\s} Q_{\s}\left(x; \frac{1}{q}\right)P_{\s}\left(1^\ell; \frac{1}{q}\right)\frac{(c)^q_{\s}}{(c+q(n+\ell))^q_{\s}}.\eeq 
Therefore, 
\begin{align} \label{eq 24}
    &G_\ell(x) \prod_{i}(1-x_i)^{-qn} = \sum_{\s}\left(\prod_{i}(1-x_i)^{-qn}Q_{\s}\left(x; \frac{1}{q}\right)\right)\left(P_{\s}\left(1^\ell; \frac{1}{q}\right)\frac{(c)^q_{\s}}{(c+q(n+\ell))^q_{\s}}\right) \notag\\
&\overset{\eqref{eq 22}}= \sum_{\s}\left(\sum_{\m} P_{\m/\s}\left(1^n; \frac{1}{q}\right)Q_{\m}\left(x;\frac{1}{q}\right)\right)\left(P_{\s}\left(1^\ell; \frac{1}{q}\right)\frac{(c)^q_{\s}}{(c+q(n+\ell))^q_{\s}}\right)\notag\\
&= \sum_{\m} Q_{\m}\left(x; \frac{1}{q}\right)\left(\sum_{\s\subseteq \m} P_{\m/\s}\left(1^n;\frac{1}{q}\right) P_{\s}\left(1^\ell; \frac{1}{q}\right)\frac{(c)^q_{\s}}{(c+q(n+\ell))^q_{\s}}\right)
\end{align}

For any partition $\m$, if $l(\m)\leq n$, then by \cite[equation (S), p. 69]{KD1997} $$P_{\m}\left(1^n; \frac{1}{q}\right)=\prod_{(i,j)\in \m}\frac{nq+j-1-q(i-1)}{m_i-j+q(m'_j-i)+q}.$$ Also, if  $J_{\m}\left(x;\frac{1}{q}\right)$ denote the ``Jack symmetric function", then by \cite[Theorem 5.4]{St1989} $$J_{\m}\left(1^n; \frac{1}{q}\right)=\frac{1}{q^{|\m|}}\prod_{(i,j)\in \m}(nq+j-1-q(i-1)).$$
Since $J_{\m}\left(x;\frac{1}{q}\right)=c_{\m}\left(\frac{1}{q}\right) P_{\m}\left(x;\frac{1}{q}\right)$ (see \cite[Theorem 2]{KD1997}), by taking $x=1^n,$ we get  \beq \label{eq 16} c_{\m}\left(\frac{1}{q}\right)=\frac{1}{q^{|\m|}}\prod_{(i,j)\in \m} (m_i-j+q+q(m_j'-i)).\eeq
We now set \beq \label{eq 23} j_{\m}\left(\frac{1}{q}\right)&:=& 
    \Inp{J_{\m}\left(x;\frac{1}{q}\right)}
    {J_{\m}\left(x;\frac{1}{q}\right)} = \frac{\left(c_{\m}(1/q)\right)^2}{b_{\m}\left(\frac{1}{q}\right)}\notag\\
   &\overset{\eqref{eq 16}}=& \frac{1}{q^{2|\m|}}\prod_{(i,j)\in \m} (m_i-j+q(m_j'-i)+q)(m_i-j+q(m_j'-i)+1).\eeq
The generalised hypergeometric function for three complex numbers $u,v,w$ (see \cite[equation (4)]{YZ1992}) is given by
      $$_2F_1(u,v;w;x)=\sum_{\s} \frac{(u)^q_{\s} (v)^q_{\s}}{(w)^q_{\s}}\frac{J_{\s}\left(x;\frac{1}{q}\right)}{j_{\s}(1/q) q^{|\s|}}.$$
Thus, $G_\ell(x)$ (see \eqref{eq 20}) can be rewritten as 
\beqn G_\ell(x) &=& \sum_{\s}\frac{(c)^q_{\s}}{(c+q(n+\ell))^q_{\s}}b_{\s}(1/q) P_{\s}\left(x; \frac{1}{q}\right)\prod_{(i,j)\in \s}\frac{q\ell+j-1-q(i-1)}{s_i-j+q(s_j'-i)+q}\\
&=&\sum_{\s}\frac{(c)^q_{\s}}{(c+q(n+\ell))^q_{\s}} \prod_{(i,j)\in \s}\frac{s_i-j+q(s_j'-i)+q}{s_i-j+q(s_j'-i)+1}\frac{J_{\s}\left(x; \frac{1}{q}\right) q^{|\s|}}{\prod_{(i,j)\in \s}(s_i-j+q+q(s_j'-i))}\\
&&\times \prod_{(i,j)\in \s}\frac{q\ell+j-1-q(i-1)}{s_i-j+q(s_j'-i)+q}\\
&\overset{\eqref{eq 23}}=& \sum_{\s}\frac{(c)^q_{\s}\; (q\ell)^q_{\s}}{(c+q(n+\ell))^q_{\s}}\frac{J_{s}\left(x; \frac{1}{q}\right)}{q^{|\s|}\;j_{\s}(1/q)}= \;_2F_1 \left(c, q\ell; c+q(n+\ell); x\right). \eeqn
Here, we have used the fact that $$\prod_{(i,j)\in \s}(q\ell+j-1-q(i-1))=\prod_{i=1}^{l(\s)}\prod_{j=1}^{s_i}(q\ell+j-1-q(i-1))=(q\ell)^q_{\s}.$$
Since, by \cite[Proposition 3.2]{YZ1992}, 
$$\prod_{i\geq 1}(1-x_i)^{u+v-w} \;_2F_1 (u,v; w; x)= \;_2F_1(w-u,w-v; w;x),$$
we get 
      \beq \label{eq 10}G_{\ell}(x)\prod_{i\geq 1}(1-x_i)^{-qn}  &=& \; _2F_1(q(n+\ell), c+qn; c+q(n+\ell); x)\notag\\
      &=& \sum_{\m} \frac{(c+qn)^q_{\m} (q(n+\ell)^q_{\m})}{(c+q(n+\ell))^q_{\m}} \frac{J_{\m}\left(x;\frac{1}{q}\right)}{j_{\m}(1/q) q^{|\m|}}\notag\\
      &=& \sum_{\m} \frac{(c+qn)^q_{\m}}{(c+q(n+\ell))^q_{\m}} Q_{\m}\left(x;\frac{1}{q}\right) P_{\m}\left(1^{n+\ell}; \frac{1}{q}\right).\eeq
  Last equality follows from the fact that $$Q_{\m}\left(x;\frac{1}{q}\right) P_{\m}\left(1^{n+\ell}; \frac{1}{q}\right)=\frac{(q(n+\ell))^q_{\m}}{q^{|\m|} j_{\m}(1/q)} J_{\m}\left(x;\frac{1}{q}\right).$$
By comparing the coefficients of $Q_{\m}\left(x;\frac{1}{q}\right)$ in \eqref{eq 10} and \eqref{eq 24}, we get the desired result.
\end{proof}

\begin{proof}[Proof of Theorem \ref{subnormality alternative}(discrete part)]
For each fixed $0 \leq k \leq r-1,$ let $\nu=\frac{a}{2}(r-1)+b+1+k\frac{a}{2}.$
Consider the embedding $\pi_{k,r}:\Delta_k\to \Delta_r$ given by $\pi_{k,r}(t_1,\ldots,t_k)=(t_1,\ldots, t_k, 1,\ldots,1)$ and set $ p_k=a(k-1)+b+2.$ For every Borel subset $A$ of $\Delta_r$, define a measure  $\mu_r^\nu$ as \beq  \label{measure for discrt} \mu_r^\nu(A)=(\mu_k^{\nu}\circ \pi_{k,r}^{-1})(A).\eeq
Here, $\mu_k^\nu$ is the measure supported on $\Delta_k$ obtained by replacing $r$ with $k$ in \eqref{measure for cont}, that is, 
$ d\mu_k^\nu(t)=C_\nu(k)\prod_{i=1}^k t_i^b (1-t_i)^{\nu-p_k}\prod_{1 \leq i <j \leq k} |t_i-t_j|^{a}dt_1 \cdots dt_k.$ Since $\nu> p_k-1$  for all $0 \leq k \leq r-1,$ the measure $\mu_k^\nu$ is a finite measure on $\Delta_k$.

Thus, \beqn  \int_{\Delta_r}S_{\m}^{a/2} (t)d\mu_r^\nu(t)
&=& \int_{\Delta_k} S_{\m}^{a/2}(t_1,\ldots,t_k,1,\ldots,1)d\mu_{k}^\nu(t_1,\ldots,t_k)\\
&=& C_\nu(k) \int_{\Delta_k} S_{\m}^{a/2} (t_1,\ldots, t_k,1,\ldots,1) \\
&& \times \prod_{i=1}^k t_i^b (1-t_i)^{\frac{a}{2}(r-k+1)-1}\prod_{1 \leq i <j \leq k} |t_i-t_j|^{a}dt_1 \cdots dt_k.\eeqn 
In order to complete the proof, it suffices to show that \beq \label{subnormal eq3}\int_{[0,1]^k}S_{\m}^{a/2}(t_1,\ldots,t_k, 1,\ldots,1) \prod_{i=1}^k t_i^b (1-t_i)^{\frac{a}{2}(r-k+1)-1}\prod_{1 \leq i <j \leq k} |t_i-t_j|^{a}dt_1 \cdots dt_k\notag\\
= k! f_k^{a/2}(0) \prod_{i=1}^k \frac{\Gamma(b+1+\frac{a}{2}(k-i))\Gamma(\frac{a}{2}(r+1-i))}{\Gamma(\frac{a}{2}(r-i)+b+1+k\frac{a}{2})}\notag\\
\times \frac{f_r^{a/2}(\m)}{f_r^{a/2}(0)} \frac{(\frac{a}{2}(r-1)+b+1)_{\m}}{(\frac{a}{2}(r-1)+b+1+k\frac{a}{2})_{\m}}.\eeq

Since, by \cite[Equation 9.26]{KD1997},  $ S_{\m}^{a/2} (t_1,\ldots,t_k,1,\ldots,1)=\sum_{\s\subseteq \m} S_{\m/\s}^{a/2}(1^{r-k})S_{\s}^{a/2}(t_1,\ldots,t_k),$ 
    \begin{align*}
        & \int_{[0,1]^k}S_{\m}^{a/2}(t_1,\ldots,t_k, 1,\ldots,1) \prod_{i=1}^k t_i^b (1-t_i)^{\frac{a}{2}(r-k+1)-1}\prod_{1 \leq i <j \leq k} |t_i-t_j|^{a}dt_1 \cdots dt_k\\
    &= \sum_{\s\subseteq \m} S_{\m/\s}^{a/2} (1^{r-k})\int_{[0,1]^k}S_{\s}^{a/2}(t_1,\ldots, t_k) \prod_{i=1}^k t_i^b (1-t_i)^{\frac{a}{2}(r-k+1)-1}\prod_{1 \leq i <j \leq k} |t_i-t_j|^{a}dt_1 \cdots dt_k\\
    &=\sum_{\s\subseteq \m} S_{\m/\s}^{a/2} (1^{r-k}) I_k^{a/2}\left(b+1, \frac{a}{2}(r-k+1); \s\right),
    \end{align*}
    where $I_k^{a/2}\left(b+1, \frac{a}{2}(r-k+1), \s\right),$ generalized Selberg's integral \cite[Theorem 1]{KD1997}, is given by $$I_k^{a/2}\left(b+1, \frac{a}{2}(r-k+1), \s\right)=k! f_k^{a/2}(\s) \prod_{i=1}^k \frac{\Gamma\left(b+1+\frac{a}{2}(k-i)+s_i\right)\Gamma\left(\frac{a}{2}(r-k+1)+\frac{a}{2}(k-i)\right)}{\Gamma\left(b+1+\frac{a}{2}(r-k+1)+\frac{a}{2}(2k-i-1)+s_i\right)}.$$ 
    Note that $S_{\s}^{a/2}(1^k)=\frac{f_k^{a/2}(\s)}{f_k^{a/2}(0)}$ (see  \cite[Equation S]{KD1997}). Thus, we get 
    \beqn \frac{I_k^{a/2}\left(b+1, \frac{a}{2}(r-k+1); \s\right)}{I_k^{a/2}\left(b+1, \frac{a}{2}(r-k+1); 0\right)}&=& S_{\s}^{a/2}(1^k) \prod_{i=1}^k \frac{\left(b+1+\frac{a}{2}(k-i)\right)_{s_i}}{\left(b+1+\frac{a}{2}(r-k+1)+\frac{a}{2}(2k-i-1)\right)_{s_i}}\\
    &=& S_{\s}^{a/2}(1^k) \prod_{i=1}^k \frac{\left(b+1+\frac{a}{2}(k-1)-\frac{a}{2}(i-1)\right)_{s_i}}{\left(b+1+\frac{a}{2}(r+k-1)-\frac{a}{2}(i-1)\right)_{s_i}}\\
    &=& S_{\s}^{a/2}(1^k) \frac{\left(b+1+\frac{a}{2}(k-1)\right)_{\s}}{\left(b+1+\frac{a}{2}(r-1)+\frac{a}{2}k\right)_{\s}}.\eeqn
    Therefore, 
\begin{align*}
    & \int_{[0,1]^k}S_{\m}^{a/2}(t_1,\ldots,t_k, 1,\ldots,1) \prod_{i=1}^k t_i^b (1-t_i)^{\frac{a}{2}(r-k+1)-1}\prod_{1 \leq i <j \leq k} |t_i-t_j|^{a}dt_1 \cdots dt_k\\
    &= I_k^{a/2}\left(b+1, \frac{a}{2}(r-k+1); 0\right) \sum_{\s\subseteq \m} S_{\m/\s}^{a/2}(1^{r-k}) S_{\s}^{a/2}(1^k)  \frac{\left(b+1+\frac{a}{2}(k-1)\right)_{\s}}{\left(b+1+\frac{a}{2}(r-1)+\frac{a}{2}k\right)_{\s}}\\
    &{=} I_k^{a/2}\left(b+1, \frac{a}{2}(r-k+1); 0\right) S_{\m}^{a/2}(1^r)\frac{\left(b+1+\frac{a}{2}(k-1)+\frac{a}{2}(r-k)\right)_{\s}}{\left(b+1+\frac{a}{2}(r-1)+\frac{a}{2}k\right)_{\s}} \;\;\text{(Proposition \ref{important lemma})}\\
   &= I_k^{a/2}\left(b+1, \frac{a}{2}(r-k+1); 0\right) S_{\m}^{a/2}(1^r)\frac{\left(b+1+\frac{a}{2}(r-1)\right)_{\s}}{\left(b+1+\frac{a}{2}(r-1)+\frac{a}{2}k\right)_{\s}} \\
   &=k! f_k^{a/2}(0) \prod_{i=1}^k \frac{\Gamma\left(b+1+\frac{a}{2}(k-i)\right)\Gamma\left(\frac{a}{2}(r+1-i)\right)}{\Gamma\left(\frac{a}{2}(r-i)+b+1+\frac{a}{2}k\right)}\frac{f_r^{a/2}(\m)}{f_r^{a/2}(0)} \frac{\left(\frac{a}{2}(r-1)+b+1\right)_{\m}}{\left(\frac{a}{2}(r-1)+b+1+\frac{a}{2}k\right)_{\m}}.
\end{align*}
This completes the proof.
 \end{proof}   
 The following result is an immediate consequence of Theorem \ref{contractionpoint} and Theorem \ref{subnormality alternative}.
\begin{corollary}
    Let $\boldsymbol M_z^{(\nu)}$ be the multiplication operator tuple on $\mathcal{H}^\nu(\Omega).$ Then
    $\boldsymbol M_z^{(\nu)}$ is subnormal if and only if it is a contractive $d$-tuple.
\end{corollary}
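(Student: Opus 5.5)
Both directions of this corollary come from matching the parameter set in Theorem \ref{contractionpoint} with the one in Theorem \ref{subnormality alternative}.

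\emph{Step 1.} Write
$$W:=\left\{\tfrac{d}{r},\tfrac{d}{r}+\tfrac{a}{2},\ldots,\tfrac{d}{r}+\tfrac{a}{2}(r-1)\right\}\cup\left(\tfrac{d}{r}+\tfrac{a}{2}(r-1),\infty\right).$$
Theorem \ref{contractionpoint} says exactly that $\boldsymbol M_z^{(\nu)}$ is a contractive $d$-tuple iff $\nu\in W$. Theorem \ref{subnormality alternative} says exactly that $\boldsymbol M_z^{(\nu)}$ is subnormal iff $\nu\in W$. So each property is equivalent to $\nu\in W$, and hence to the other.

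\emph{Step 2 (circularity check).} The sufficiency half of Theorem \ref{contractionpoint} was proved by invoking \cite[Theorem 6.1]{AZ2003}. To get a self-contained argument, I would instead use Theorem \ref{subnormality alternative}. That theorem was established independently: through Theorem \ref{Main theorem 1}(iii), the Selberg integral computation for the continuous range, and Proposition \ref{important lemma} for the discrete values.

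For $\nu\in W$, the measure $\mu$ built from \eqref{measure for cont} or \eqref{measure for discrt} via \eqref{eq 14} is $\mathbb K$-invariant and supported on $\overline{\Omega}$, because $\mu_r^\nu$ lives on $\Delta_r$. By the argument in the proof of (ii)$\Rightarrow$(i) of Theorem \ref{Main theorem 1}, this support gives
$$\sum_{\ell=0}^k(-1)^\ell\binom{r-\ell}{k-\ell}\bigl\langle\Delta^{(\ell)}(z,w)(\boldsymbol M_z^{(\nu)},\boldsymbol M_z^{(\nu)*})p,p\bigr\rangle=\int_{\overline\Omega}f_k|p|^2\,d\mu\ge 0.$$
The inequality holds by Lemma \ref{domain description}, since $f_k\ge 0$ on $\overline{\Omega}$. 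This settles the implication ``subnormal $\Rightarrow$ contractive.''

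For the converse, I would use the necessity half of Theorem \ref{contractionpoint}. It only evaluates the defining inequalities on constants and applies the Chu--Vandermonde identity, and it forces $\nu\in W$. Subnormality then follows from Theorem \ref{subnormality alternative}.

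\emph{Expected obstacle.} The only delicate point is the one just addressed: avoiding circular dependence on the AZ2003 citation. Everything else is direct, because both theorems characterize their property by the same set $W$.
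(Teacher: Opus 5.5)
Your Step 1 is the paper's argument: the corollary is recorded as an immediate consequence of Theorem \ref{contractionpoint} and Theorem \ref{subnormality alternative}, which characterize contractivity and subnormality of $\boldsymbol M_z^{(\nu)}$ by the same parameter set $W$. Step 2 is correct but optional: citing \cite[Theorem 6.1]{AZ2003} is an external input, not a circularity, and the paper already proves ``subnormal $\Rightarrow$ contractive'' directly in the paragraph before Theorem \ref{subnormality alternative} (via $\sigma(\boldsymbol M_z^{(\nu)})=\overline{\Omega}$ and Corollary \ref{complete characterization}).
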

Let $m \geq 2,$ and $\Omega=\{z \in \mathbb C^{2 \times m}: \|z\| <1\}$ be a type-I domain of rank $2.$ The characteristic multiplicities are $a=2$ and $b=m-2.$ For $ \nu, \eta >1,$ let $K^{\nu,\eta}(z,w):\Omega \times \Omega\to \mathbb C$ be a sesqui-analytic function given by $$K^{\nu,\eta}(z,w)=\sum_{\s} \frac{(\nu)_{\s}}{(\eta)_{\s}}(m)_{\s} K_{\s}(z,w),\; z, w\in \Omega.$$ 
By \cite[Lemma 5.1]{FK1990}, $K^{\nu,\eta}$ is positive semi-definite. Let $\boldsymbol M_z^{\nu,\eta}$ denote the tuple of multiplication operators by the coordinate functions on the reproducing kernel Hilbert space $\mathcal{H}^{\nu,\eta}(\Omega)$ determined by the kernel $K^{\nu,\eta}.$  It follows from \cite[Theorem 3.5]{GKP2022} that $\boldsymbol M_z^{\nu,\eta}$ is bounded.
Indeed, $$\sup\left\{ \frac{(\nu)_{\s-\epsilon_i}}{(\nu)_{\s}}\frac{(\eta)_{\s}}{(\eta)_{\s-\epsilon_i}}: \s, \s-\epsilon_i \in \Vec{\mathbb N^2}, i=1,2\right\}\leq \max\left\{\frac{\eta}{\nu}, \frac{\eta-1}{\nu-1}\right\}.$$ 
%$\frac{f_2^1(\s)}{f_2^1(\underline 0)}\frac{(\eta)_{\s}}{(\nu)_{\s} }$
As an application of Corollary \ref{complete characterization}, we obtain the subnormality of $\boldsymbol M_z^{\nu, \eta}$ in the following result.

 \begin{proposition}
      $\boldsymbol M_z^{\nu, \eta}$ is subnormal contractive $2m$-tuple if and only if $\nu=\eta$ or $\nu\geq \eta+1.$ 
     Moreover, the representing measure for the sequence $\{c(s)\}_{s\in \mathbb Z_+^2}$ is explicitly given by the following cases:
     \begin{enumerate}
         \item [(i)] For $\nu=\eta$, the measure supported on $[0,1]^2$ is given by  $d\mu^\nu(t)=2\;d\delta (t_1)\otimes d\delta(t_2),$ where $\delta(t_i)$ is the Dirac delta measure on the singleton set $\{1\}.$
         \item [(ii)] For $\nu=\eta+1$, the measure supported on $[0,1]^2$ is given by $d\mu^\nu(t)=\eta(\eta-1)[t_1^{\eta-2}(1-t_1) dt_1 \otimes d\delta (t_2)+d\delta(t_1)\otimes t_2^{\eta-2}(1-t_2)dt_2].$
         \item [(iii)] For $\nu>\eta+1$, the measure supported on $[0,1]^2$ is given by $$d\mu^\nu(t)=\prod_{i=1}^2 \frac{\Gamma(\nu-i+1)}{\Gamma(\eta-i+1)\Gamma(\nu-\eta-i+1)} t_1^{\eta-2}t_2^{\eta-2}(1-t_1)^{\nu-\eta-2}(1-t_2)^{\nu-\eta-2} (t_1-t_2)^2dt_1 dt_2.$$
     \end{enumerate}
 \end{proposition}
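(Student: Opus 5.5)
The plan is to specialise Corollary \ref{complete characterization} to $r=2$, $a=2$, $b=m-2$, so that $q=a/2=1$, $d/r=m$, and the Selberg--Jack polynomials are Schur polynomials. Here $\alpha_{\s}=\frac{(\nu)_{\s}}{(\eta)_{\s}}(m)_{\s}$, and $f_2^{1}(\s)=s_1-s_2+1$ with $f_2^1(\underline 0)=1$. The sequence in Corollary \ref{complete characterization}(iii) is then
$$b_{\s}=(s_1-s_2+1)\frac{(\eta)_{\s}}{(\nu)_{\s}}=S_{\s}(1,1)\,\frac{(\eta)_{\s}}{(\nu)_{\s}}.$$
This is exactly the sequence treated in Theorem \ref{subnormality alternative} for a rank-$2$ domain with $a=2$, except that $d/r=m$ is replaced by $\eta$. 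Equivalently, the parameter $b$ is formally replaced by $\eta-2$, so that $b+1+\frac a2(r-1)=\eta$. The idea is therefore to rerun that proof with this non-integer parameter. All ingredients used there (Kadell's generalized Selberg integral \cite[Theorem 1]{KD1997} and Proposition \ref{important lemma}) are valid for real parameters, provided the resulting densities are integrable.

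For necessity, I would use condition \eqref{eq 12} at $\m=\underline 0$, exactly as in the proof of Theorem \ref{contractionpoint}. On constants, $c_{\underline 0}(L)$ vanishes unless $L=\{1,\dots,\ell\}$, and it equals $\binom{2}{\ell}$ in that case. Hence $\Delta^{(1)}$ acts by $2\eta/\nu$ and $\Delta^{(2)}$ by $\frac{\eta(\eta-1)}{\nu(\nu-1)}$. The two contractivity inequalities become
$$2-\frac{2\eta}{\nu}\ge 0,\qquad 1-\frac{2\eta}{\nu}+\frac{\eta(\eta-1)}{\nu(\nu-1)}=\frac{(\nu-\eta)(\nu-\eta-1)}{\nu(\nu-1)}\ge 0.$$
Since $\nu>1$, these force $\nu\ge\eta$ and $(\nu-\eta)(\nu-\eta-1)\ge0$, that is, $\nu=\eta$ or $\nu\ge\eta+1$.

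For sufficiency, I would exhibit a representing measure $\nu'$ on $\Delta_2$ for $\{b_{\s}\}$. Its symmetrization on $[0,1]^2$, as in the proof of Theorem \ref{moment problem condition}, is then the representing measure of the Hausdorff sequence $c(s)$, which accounts for the factor $2=r!$ in the stated formulas. There are three cases.
\begin{itemize}
\item[(i)] If $\nu=\eta$, then $b_{\s}=S_{\s}(1,1)$, so $\nu'=\delta_{(1,1)}$ works.
\item[(iii)] If $\nu>\eta+1$, I copy the continuous-part computation with $b\to\eta-2$ and $p_2\to\eta+1$. Kadell's formula gives $\int S_{\s}\,d\mu=S_{\s}(1,1)(\eta)_{\s}/(\nu)_{\s}$ for the stated density. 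Integrability requires $\eta-2>-1$ and $\nu-\eta-2>-1$, both of which hold.
\item[(ii)] If $\nu=\eta+1$, I mimic the discrete part with $k=1$: push forward a one-variable beta measure $C\,t^{\eta-2}(1-t)\,dt$ under $t\mapsto(t,1)$. Then I expand $S_{\s}(t,1)=\sum_{\underline\lambda\subseteq\s}S_{\s/\underline\lambda}(1)S_{\underline\lambda}(t)$ and integrate each term with the one-variable beta integral. Finally I resum using Proposition \ref{important lemma} with $q=1$, $n=\ell=1$ and $c=\eta-1$. This yields $S_{\s}(1^2)\,(\eta)_{\s}/(\eta+1)_{\s}$.
\end{itemize}

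The main obstacle is this last resummation. One must check carefully that the Pochhammer shifts produced by the beta integrals match the left side of Proposition \ref{important lemma} with the correct $c$. One must also check the normalizing constant $\eta(\eta-1)$, by evaluating at $\s=\underline 0$. If the bookkeeping misbehaves, a fallback exploits the fact that for $q=1$ the skew Schur values $S_{\s/\underline\lambda}(1)$ are $0$ or $1$ (interlacing). In that case I would verify the identity directly as a telescoping sum over $\lambda_1$ with $s_2\le\lambda_1\le s_1$. Finally, Corollary \ref{complete characterization} converts existence of these measures into subnormal contractivity.
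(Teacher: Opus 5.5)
Your proof is correct, but it takes a different route from the paper's.

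\textbf{The paper's route.} The paper never constructs the measures. It computes $c(s)$ explicitly from the inverse Kostka matrix and invokes the criterion that a multisequence is a Hausdorff moment sequence if and only if it is completely monotone. Necessity comes from the single second difference $c(0,0)-c(1,0)-c(0,1)+c(1,1)\ge 0$. Sufficiency for $\nu>\eta+1$ comes from writing $H=\beta^{-1}(E_1-E_2)^2(g(s_1)g(s_2))$ and using log-convexity of $(-1)^k\Delta^k g$. The explicit measures (i)--(iii) are then dismissed as ``a routine verification''.

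\textbf{Comparison.} Your necessity argument is the same inequality seen from the operator side. Indeed, $c(0,0)-c(1,0)-c(0,1)+c(1,1)=2\bigl(1-\tfrac{2\eta}{\nu}+\tfrac{\eta(\eta-1)}{\nu(\nu-1)}\bigr)$ is exactly twice your $k=2$ contractivity expression on constants. You obtain it from contractivity alone, without using subnormality.

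For sufficiency, you observe that $b_{\underline{s}}=S_{\underline{s}}(1,1)(\eta)_{\underline{s}}/(\nu)_{\underline{s}}$ is the weighted Bergman sequence of Theorem \ref{subnormality alternative} with $d/r$ replaced by $\eta$. Under this reading, $\nu=\eta,\eta+1$ are the two ``discrete'' points and $\nu>\eta+1$ is the continuous range. This lets you verify Corollary \ref{complete characterization}(iii) directly, which buys two things. You actually prove the ``moreover'' part rather than assert it. You also explain why the answer has the Wallach-set shape. The cost is reliance on Kadell's generalized Selberg integral in place of elementary finite differences.

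\textbf{Two small corrections.}
\begin{itemize}
\item In case (iii) the substitution is $p_2=a+b+2\to\eta+2$, not $\eta+1$. With $\eta+1$, Kadell's formula would put $(\nu+1)_{\underline{s}}$ in the denominator. The density you actually integrate, with exponent $\nu-\eta-2$, is the correct one, so the slip is harmless.
\item In case (ii), the resummation you flag as the main obstacle is immediate. Since $S_{\underline{s}}(t,1)=\sum_{j=s_2}^{s_1}t^j$ and $\int_0^1 t^{j+\eta-2}(1-t)\,dt=\frac{1}{(\eta+j-1)(\eta+j)}$, the sum telescopes:
$$\eta(\eta-1)\sum_{j=s_2}^{s_1}\Bigl(\frac{1}{\eta+j-1}-\frac{1}{\eta+j}\Bigr)=\frac{\eta(\eta-1)(s_1-s_2+1)}{(\eta+s_2-1)(\eta+s_1)}=S_{\underline{s}}(1,1)\,\frac{(\eta)_{\underline{s}}}{(\eta+1)_{\underline{s}}}.$$
This confirms both the constant $\eta(\eta-1)$ and your choice $c=\eta-1$, $n=\ell=1$ in Proposition \ref{important lemma}.
\end{itemize}
Also, $(t,1)$ lies in $\Delta_2$ only after reordering to $(1,t)$. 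This is harmless by symmetry of $S_{\underline{s}}$, and the paper's embedding $\pi_{k,r}$ has the same convention.
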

\begin{proof} 
For the rank $2$ type-I domain, the entries $K_{\m,\s}$ of the Kostka matrix are $1$ for all $\s\leq\m$, and $K_{\m,\s}=0$ otherwise (see \eqref{rank 2 Kostka}). Consequently, the inverse Kostka matrix has $1$'s on the diagonal, $-1$'s on the principal superdiagonal, and zeros elsewhere. Thus, using \eqref{relation between two sequence}, we obtain the sequence $\{c(s)\}_{s\in \mathbb Z_+^2}$ as follows:
\beqn c(s)=\frac{\Gamma(\eta+s_1-1)\Gamma(\eta+s_2-1)\Gamma(\nu)\Gamma(\nu-1)}{\Gamma(\eta)\Gamma(\eta-1)\Gamma(\nu+s_1-1)\Gamma(\nu+s_2-1)}\left[2+(\eta-\nu)\frac{2\nu+s_1+s_2-2-(s_1-s_2)^2}{(\nu+s_1-1)(\nu+s_2-1)}\right],\eeqn for every $s=(s_1,s_2)\in \mathbb Z_+^2.$
In view of Corollary \ref{complete characterization}, the subnormality of $\boldsymbol{M}_z^{\nu, \eta}$ is equivalent to proving that the sequence $\{c(s)\}_{s\in \mathbb Z_+^2}$ is a Hausdorff moment sequence.
It is known that a (multi) sequence is a Hausdorff moment sequence if and only if it is completely monotone (see \cite[Proposition 6.11]{BCR1984}).
 If $\{c(s)\}_{s\in \mathbb Z_+^2}$ is completely monotone, then necessarily $c(0,0)-c(1,0)-c(0,1)+c(1,1)= 2-2\frac{\eta}{\nu}-2\frac{\eta}{\nu}-2\frac{\eta(\eta-1)}{\nu(\nu-1)}=2\frac{(\nu-\eta)(\nu-\eta-1)}{\nu(\nu-1)}\geq 0.$ Since $\nu>1, \; \nu \in \{\eta\}\cup [\eta+1,\infty).$  
 
 Conversely, if $\nu=\eta$, then $c(s)=2$ for all $s\in \mathbb Z_+^2.$ Therefore, $\{c(s)\}$ is completely monotone.
For $\nu=\eta+1$, it is evident that $$c(s_1,s_2)=\eta(\eta-1)\left[\frac{1}{(\eta+s_2)(\eta+s_2-1)}+\frac{1}{(\eta+s_1)(\eta+s_1-1)}\right].$$ For any positive integer $n,$ if $f(n)=\frac{1}{(\eta+n)(\eta+n-1)}$ and $(\Delta f)(n)=f(n+1)-f(n)$,
then $$(-1)^k(\Delta^k f)(n)=\frac{ (k+1)!}{(\eta+n-1)(\eta+n)\cdots (\eta+n+k)} \geq 0, \; \text{ for all }\; n, k \in \mathbb Z_+.$$ This shows that $\{c(s)\}$ is completely monotone for $\nu=\eta+1.$ 
 
The case when $\nu>\eta+1,$ consider $ \beta=\nu-\eta-1>0,$ and for $s=(s_1,s_2)\in \mathbb Z_+^2,$ $$H(s_1,s_2)=\frac{\Gamma(\eta+s_1-1)\Gamma(\eta+s_2-1)}{\Gamma(\nu+s_1-1)\Gamma(\nu+s_2-1)}\left[2+(\eta-\nu)\frac{2\nu+s_1+s_2-2-(s_1-s_2)^2}{(\nu+s_1-1)(\nu+s_2-1)}\right].$$ 
 Note that the complete monotonicity of $\{c(\s)\}$ is equivalent to the completely monotonicity of $\{H(s)\}_{s\in \mathbb Z_+^2}$. Let $g(n)=\frac{\Gamma(\eta+n-1)}{\Gamma(\nu+n-2)},$ then for any positive integer $k$, $$U_k(n):=(-1)^k(\Delta^k g)(n)= \frac{\Gamma(\beta+k)}{\Gamma(\beta)}\frac{\Gamma(\eta+n-1)}{\Gamma(\nu+n-2+k)}.$$ 
Clearly, $U_k(n) \geq 0$ whenever $\nu >\eta+1.$  For any function $h:\mathbb Z_+^2 \to \mathbb C$, we consider the maps $E_1$ and $E_2$ given by $E_1(h(s_1,s_2)):= h(s_1+1,s_2)$ and $E_2(h(s_1,s_2)):=h(s_1,s_2+1).$
 It is easy to check that $$H(s_1,s_2)=\frac{1}{\beta} (E_1-E_2)^2 (g(s_1)g(s_2)).$$ Here, by the product $E_iE_j$, we mean the composition of the corresponding maps.
 Therefore, for any positive integers $k_1,k_2$,
 \begin{align*}
     &(-1)^{k_1+k_2} \Delta_1^{k_1}\Delta_2^{k_2}H(s_1,s_2) = (-1)^{k_1+k_2} \Delta_1^{k_1}\Delta_2^{k_2} \left(\frac{1}{\beta} (E_1-E_2)^2 (g(s_1)g(s_2))\right)\\
 &= \frac{1}{\beta} (E_1-E_2)^2 \left[ (-1)^{k_1}(\Delta_1^{k_1} g)(s_1) (-1)^{k_2} (\Delta_2^{k_2}g)(s_2)\right]=\frac{1}{\beta} (E_1-E_2)^2 (U_{k_1}(s_1)U_{k_2}(s_2))\\
 &= \frac{1}{\beta}\left[ U_{k_1}(s_1+2)U_{k_2}(s_2)-2U_{k_1}(s_1+1)U_{k_2}(s_2+1)+U_{k_1}(s_1)U_{k_2}(s_2+2)\right]\\
 &= \frac{U_{k_1}(s_1)U_{k_2}(s_2)}{\beta}\left[\frac{U_{k_1}(s_1+2)}{U_{k_1}1(s_1)}-2\frac{U_{k_1}(s_1+1) U_{k_2}(s_2+1)}{U_{k_1}(s_1)U_{k_2}(s_2)}+\frac{U_{k_2}(s_2+2)}{U_{k_2}(s_2)}\right]\\
 &\geq \frac{U_{k_1}(s_1)U_{k_2}(s_2)}{\beta} \left[\left(\frac{U_{k_1}(s_1+1)}{U_{k_1}(s_1)}\right)^2-2\frac{U_{k_1}(s_1+1)}{U_{k_1}(s_1)}\frac{U_{k_2}(s_2+1)}{U_{k_2}(s_2)}+\left(\frac{U_{k_2}(s_2+1)}{U_{k_2}(s_2)}\right)^2 \right] \geq 0.
 \end{align*}
 In the second last inequality, we have used that $\frac{U_k(n+2)}{U_k(n+1)}>\frac{U_k(n+1)}{U_k(n)},$ whenever $\nu>\eta+1.$ 
Therefore, the sequence $\{H(s_1,s_2)\}$ is completely monotone. The moreover part of the result is a routine verification. 
\end{proof}


\begin{thebibliography}{33}

\bibitem{AJ1995} 
J. Arazy, \emph{A survey of invariant {H}ilbert spaces of analytic functions on bounded symmetric domains}, Contemporary Mathematics, {\bf 185}  (1995), 7-65.

\bibitem{AZ2003}
J. Arazy and G. Zhang, \emph{Homogeneous multiplication operators on bounded symmetric domains}, J. Funct. Anal., {\bf 202} (2003), no. 1, 44–66.

\bibitem{At1990}
A. Athavale, \emph{On the intertwining of joint isometries}, J. Operator Theory, {\bf 23} (1990), no. 2, 339–350.

\bibitem{BM1996}
B. Bagchi and G. Misra, \emph{Homogeneous tuples of multiplication operators on twisted Bergman spaces}, J. Funct. Anal., {\bf 136 } (1996), no. 1, 171–213.

\bibitem{BCR1984}
C. Berg, J. P. Christensen, and P. Ressel, \emph{Harmonic analysis on semigroups}, Grad. Texts in Math., 100
Springer-Verlag, New York, 1984. x+289 pp.

\bibitem{Ca1935}
\'E. Cartan, \emph{Sur les domaines born\'es homog\'enes de l'espace den variables complexes}, Abh. Math. Semin. Univ. Hambg., {\bf 11} (1935), 116-162.

\bibitem{CY2015}
S. Chavan and D. Yakubovich,
\emph{Spherical tuples of Hilbert space operators},
Indiana Univ. Math. J., {\bf 64} (2015), 577-612.


\bibitem{Cn1991}
 J. B. Conway, \emph{The theory of subnormal operators}, Math. Surveys Monogr., {\bf 36}, American Mathematical Society, Providence, RI, 1991. xvi+436 pp.

 \bibitem{Cn19912}
 J. B. Conway, \emph{Towards a functional calculus for subnormal tuples: the minimal normal extension}, Trans. Amer. Math. Soc., {\bf 326} (1991), no. 2, 543–567.

\bibitem{RC1985}
R. E. Curto, N. Salinas, \emph{Spectral properties of cyclic subnormal $m$-tuples}, Amer. J. Math., {\bf 107} (1985), no. 1, 113–138.

\bibitem{RC1988}
R. E. Curto, \emph{Applications of several complex variables to multiparameter spectral
theory} In Surveys of some recent results in operator theory, Vol. II, volume 192 of
Pitman Res. Notes Math. Ser., pages 25–90. Longman Sci. Tech., Harlow, 1988.

\bibitem{DE2005}
M. Didas and J. Eschmeier, \emph{Subnormal tuples on strictly pseudoconvex and bounded symmetric domains}, Acta Sci. Math. (Szeged), {\bf 71} (2005), no. 3-4, 691–731.

\bibitem{HD2003}
 H. Duan, \emph{On the inverse Kostka matrix}, J. Combin. Theory Ser. A, {\bf 103} (2003), no. 2, 363-376.

 \bibitem{ER1990}
O. Egecioglu, J. B. Remmel, \emph{A combinatorial interpretation of the inverse Kostka matrix}, {Linear and Multilinear Algebra}, {\bf 26} (1990), no. 1-2, 59-84.



\bibitem{FK1990}
J. Faraut and A. Koranyi, \emph{Function spaces and reproducing kernels on bounded symmetric domains}, J. Funct. Anal., {\bf 88} (1990), 64-89.

\bibitem{GKP2022}
S. Ghara, S. Kumar, and P. Pramanick, \emph {$\mathbb K$-homogeneous tuple of operators on bounded symmetric domains}, Israel J. Math., {\bf 247} (2022), 331-360.

\bibitem{GR2006}
J. Gleason and S. Richter, \emph{$m$-Isometric Commuting Tuples of Operators on a Hilbert Space}, Integral Equations Operator Theory, {\bf 56} (2006), 181–196.

\bibitem{KD1988}
K. W. J. Kadell, \emph{A proof of some $q$-analogues of Selberg's integral for $k=1$}, SIAM J. Math. Ana., {\bf 19}(1988), 944-968.

\bibitem{KD1997}
K. W. J. Kadell, \emph{The Selberg-Jack symmetric functions}, Adv. Math., {\bf 130} (1997), no. 1, 33–102.

\bibitem{KM2019}
A. Kor\'anyi and G. Misra, \emph{Homogeneous Hermitian holomorphic vector bundles and the Cowen-Douglas class over bounded symmetric domains}, Adv. Math., {\bf 351} (2019), 1105–1138.

\bibitem{KMP2025}
 S. Kumar, M. K. Mal, and P. Pramanick, \emph{Cartan isometries and Toeplitz operators on Cartan domains}, 
https://doi.org/10.48550/arXiv.2505.24325.

\bibitem{Ls1977}
O. Loos, \emph{Bounded symmetric domains and Jordan pairs}, University of California, Irvine, 1977.

\bibitem{MD1995}
I. G. Macdonald, \emph{Symmetric Functions and Hall Polynomials}, 2nd ed., Oxford
Univ. Press, Oxford, 1995.

\bibitem{MS1990}
G. Misra and N. S. N. Sastry, \emph{Homogeneous tuples of operators and representations of some classical groups},  J. Operator Theory, {\bf 24} (1990), no. 1, 23–32.

\bibitem{MU2016}
G. Misra and H. Upmeier, \emph{Homogeneous vector bundles and intertwining operators for symmetric domains}, Adv. Math., {\bf 303} (2016), 1077–1121.

\bibitem{SZ1974}
Z. Slodkowski and W. \.Zelazko, \emph{On joint spectra of commuting families of operators}, Studia Math., {\bf 50} (1974), 127-148.

\bibitem{St1989}
R. P. Stanley, \emph{Some combinatorial properties of Jack symmetric functions}, Adv. Math., {\bf 77} (1989), no. 1, 76–115.


\bibitem{UP1996}
H. Upmeier, \emph{Toeplitz operators and index theory in several complex variables}, Operator Theory Advances and Applications,  {\bf 81}, Birkh\"auser, 1996.

\bibitem{Up2021}
 H. Upmeier, \emph{Eigenvalues of $K$-invariant Toeplitz operators on bounded symmetric domains}, Integral Equations Operator Theory, {\bf 93}, (2021), Article no. 27. 

\bibitem{HU2023}
H. Upmeier, \emph{$K$ -invariant Hilbert modules and singular vector bundles on bounded symmetric domains}, J. Reine Angew. Math., {\bf 799} (2023), 155–187.


\bibitem{YZ1992} Z. Yan, \emph{A class of generalized hypergeometric functions in several variables},  Canad. J. Math., {\bf 44} (1992), no. 6, 1317–1338.


\end{thebibliography}
\end{document}